\definecolor{dark-red}{rgb}{0.5,0.15,0.15}
\definecolor{dark-blue}{rgb}{0.15,0.15,0.6}
\definecolor{dark-green}{rgb}{0.15,0.6,0.15}
\newcommand{\Z}{\mathbb{Z}} 
\newcommand{\Q}{\mathbb{Q}} 
\newcommand{\C}{\mathbb{C}} 
\newcommand{\F}{\mathbb{F}} 
\newcommand{\Fpbar}{\bar{\mathbb{F}}_p} 
\newcommand{\G}{\mathbb{G}}
\newcommand{\ee}{e}
\newcommand{\ff}{f}
\newcommand{\A}{\mathbb{A}} 
\newcommand{\Gal}[1]{G_{#1}} 
\newcommand{\rGal}[2]{\mathrm{Gal}(#1/#2)} 
\newcommand{\e}{\zeta} 
\newcommand{\dsh}{d^{\prime sh}} 
\newcommand{\LL}{{\rm Ker}( \dsh )} 
\newcommand \CO {{\mathcal O}}
\newcommand \BA {\mathbb{A}}
\newcommand \PP {{\mathbb P}^1}
\newcommand \ZZ {{\mathbb Z}}
\newcommand \CC {{\mathbb C}}
\newcommand \QQ {{\mathbb Q}}
\newcommand \NN {{\mathbb N}}
\newcommand  \FF {{\mathbb F}}
\newcommand \Ker {\mathop {\rm Ker}}
\DeclareMathOperator{\dlog}{dlog}
\DeclareMathOperator{\Nm}{Nm}
\DeclareMathOperator{\im}{im}
\newtheorem{theorem}{Theorem}[section]
\newtheorem{lemma}[theorem]{Lemma} 
\newtheorem{corollary}[theorem]{Corollary}
\newtheorem{proposition}[theorem]{Proposition} \theoremstyle{definition}
\newtheorem{remark}[theorem]{Remark} 
\newtheorem{definition}[theorem]{Definition}
\newtheorem{fact}[theorem]{Fact}
\newcommand{\Aa}{A} 
\newcommand{\Ash}{A^{sh}} 
\title{Galois action on the homology of Fermat curves}
\author{Rachel Davis}
\address{Purdue University}
\email{davis705@math.purdue.edu}
\author{Rachel Pries}
\address{Colorado State University}
\email{pries@math.colostate.edu}
\author{Vesna Stojanoska}
\address{Max Planck Institute for Mathematics}
\email{vstojanoska@mpim-bonn.mpg.de}
\author{Kirsten Wickelgren}
\address{Georgia Institute of Technology}
\email{kwickelgren3@math.gatech.edu}
\thanks{We would like to thank BIRS for hosting the WIN3 conference where we began this project.  Some of
this work was done while the third and fourth authors were in
residence at MSRI during the spring 2014 Algebraic topology semester, supported by NSF
grant 0932078 000.
The second author was supported by NSF grant DMS-1101712. The third author was supported by NSF grant DMS-1307390. The fourth author was supported by an American Institute of Mathematics five year fellowship and NSF grant DMS-1406380. 
We would also like to thank Sharifi and the referee for helpful remarks.}
\begin{document}

\begin{abstract}
In \cite{Anderson}, Anderson determines the homology of the degree $n$ Fermat curve as a Galois module 
for the action of the absolute Galois group $G_{\QQ(\zeta_n)}$. 
In particular, when $n$ is an odd prime $p$, he shows that the action of $G_{\QQ(\zeta_p)}$ 
on a more powerful relative homology group factors through the Galois group of the splitting field of the polynomial $1-(1-x^p)^p$.
If $p$ satisfies Vandiver's conjecture, we give a proof that the Galois group $G$ 
of this splitting field over $\QQ(\zeta_p)$ is an elementary abelian $p$-group of rank $(p+1)/2$.
Using an explicit basis for $G$, we completely compute the relative homology, the homology, and the homology of an open subset of the degree $3$ Fermat curve as Galois modules. We then compute several Galois cohomology groups which arise in connection with obstructions to rational points.\\
MSC2010: 11D41, 11R18, 11R34, 14F35, 14G25, 55S35\\
Keywords: Fermat curve, cyclotomic field, homology, cohomology, Galois module, \'etale fundamental group 
 \end{abstract}

\maketitle

\section{Introduction}

The Galois actions on the \'etale homology, cohomology, and homotopy groups of varieties carry information about rational points. We revisit results of Anderson \cite{Anderson} on a relative homology group of the Fermat curve of prime exponent to make his results amenable to computations of groups such as $H^1(G_S, \pi_1^{\rm ab})$ and $H^2(G_S, \pi_1^{\rm ab} \wedge \pi_1^{\rm ab})$ where $G_S$ denotes a Galois group of a maximal extension of a number field with restricted ramification and $\pi_1^{\rm ab}$ denotes the abelianized geometric fundamental group of the Fermat curve, or of an open subset. These groups arise in obstructions of Ellenberg to rational points \cite{Ellenberg_2_nil_quot_pi} as well as in McCallum's application of the method of Coleman and Chabauty to Fermat curves \cite{McCallum94}. 

Let $k$ be a number field. 
The Fermat curve of exponent $n$ is the smooth projective curve $X \subset {\mathbb P}_k^2$ of genus $g=(n-1)(n-2)/2$ given by the equation
\[x^n + y^n = z^n.\] The affine open $U \subset X$ given by $z \not = 0$ has affine equation $x^n+y^n=1$.
The closed subscheme $Y \subset X$ defined by $xy=0$ consists of $2n$ points. Let $H_1(U,Y; \ZZ/n)$ denote the \'etale homology group of the pair $(U \otimes \overline{k},Y \otimes \overline{k})$, which is a continuous module over the absolute Galois group $\Gal{k}$ of $k$. The $\mu_n \times \mu_n$ action on $X$ given by $$(\e^i, \e^j) \cdot [x,y,z] = [\e^i x,\e^j y,z] , ~~~~~~(\e^i, \e^j) \in \mu_n \times \mu_n $$ determines an action on $U$ and $Y$.  These actions give $H_1(U,Y; \ZZ/n)$ the structure of a $(\ZZ/n)[\mu_n \times \mu_n]$ module. As a $(\ZZ/n)[\mu_n \times \mu_n]$ module, $H_1(U,Y; \ZZ/n)$ 
is free of rank one \cite[Theorem 6]{Anderson}, with generator denoted $\beta$.    
It follows that the Galois action of $\sigma \in \Gal{k}$ is determined by $\sigma \beta = B_{\sigma} \beta$ for some $B_\sigma \in  (\ZZ/n)[\mu_n \times \mu_n]$.

Anderson shows that $B_{\sigma}$ is determined by an analogue of the classical gamma function $\Gamma_{\sigma} \in \ZZ/n^{\rm sh}[\mu_n]$,
where $ \ZZ/n^{\rm sh}$ denotes the strict Henselization of $\ZZ/n$. 
In particular, there is a formula \cite[Theorem 9, Theorem 7]{Anderson} recalled in \eqref{EdbarBsigma} as the equation $\dsh(\Gamma_{\sigma}) = B_{\sigma}$ with $\dsh$ defined in \eqref{d'def} and immediately below. 
The canonical derivation $d: \ZZ/n^{\rm sh}[\mu_n] \to \Omega \ZZ/n^{\rm sh}[\mu_n]$ from the ring $\ZZ/n^{\rm sh}[\mu_n]$ to its module of K\"ahler differentials allows one to take the logarithmic derivative $\dlog \Gamma_{\sigma}$ of $\Gamma_{\sigma}$, which it is convenient to view as an element of a particular quotient of $\Omega \ZZ/n^{\rm sh}[\mu_n]$. See Section \ref{Ssurvey}. For $n$ prime, $\dlog \Gamma_{\sigma}$ determines $B_{\sigma}$ uniquely \cite[10.5.2,10.5.3]{Anderson}. The function $\sigma \mapsto \dlog \Gamma_{\sigma}$ is in turn determined by a relative homology group of the punctured affine line $H_1(\mathbb{A}^1 - V(\sum_{i=0}^{n-1} x^i), \{ 0, 1\}; \ZZ/n)$ \cite[Theorem 10]{Anderson}. 
Putting this together, Anderson shows that, for $n=p$ a prime, the $\Gal{\QQ(\zeta_p)}$ action on $H_1(U,Y; \ZZ/p)$ 
factors through $\rGal{L}{\QQ(\zeta_p)}$ where $L$ is the splitting field of $1-(1-x^p)^p$.
Ihara \cite{Ihara} and Coleman \cite{Coleman89} obtain similar results from different viewpoints. 

Let $K$ denote the cyclotomic field $K=\QQ(\e_n)$, where $\e_n$ denotes a primitive $n$th root of unity, and let $\Gal{K}$ be its absolute Galois group. 
When the exponent is clear, let $\e$ denote $\e_n$ or $\e_p$ for a prime $p$.
Let $\kappa$ denote the classical Kummer map; for $\theta \in K^*$, let 
$\kappa(\theta): \Gal{K} \to \mu_n$ be defined by
\[\kappa (\theta) (\sigma) = \frac{\sigma \sqrt[n]{\theta}}{ \sqrt[n]{\theta}}.\] 
In Proposition \ref{Pkappac}, we determine $\dlog \Gamma_{\sigma}$ in terms of the classical Kummer map for all $n \geq 3$, modulo indeterminacy which does not affect $B_{\sigma}$, with the answer being 
$\dlog \Gamma_{\sigma} = \sum_{i=1}^{n-1} \kappa(1-\zeta^{-i})(\sigma) \e^i \dlog \e$.

Recall that Vandiver's Conjecture for a prime $p$ is that $p$ does not divide $h^+$, where $h^+$ is the order of the 
class group of $\QQ(\zeta_p+\zeta_p^{-1})$.  It has been verified for all $p$ less than 163 million. 
For $n=p$ a prime satisfying Vandiver's conjecture, we give a proof that $\rGal{L}{K}$ is isomorphic to $(\ZZ/p)^r$ with $r = (p+1)/2$ in Proposition \ref{Pgrouprank}. This is false for $p$ not satisfying Vandiver's conjecture as seen in
Remark \ref{p_not_Vandiver_GalLK_remark}.
There are a couple of natural choices for such an isomorphism. 
In Corollary \ref{C_iso}, we show that the following map gives an isomorphism: $$\Phi=\kappa(\e) \times \prod_{i = 1}^{\frac{p-1}{2}}\kappa(1-\e^{-i}) : {\rm Gal}(L/K) \to (\mu_p)^{\frac{p+1}{2}}.$$

For $p=3$, we use the formula for $\dlog \Gamma_\sigma$ to compute $B_{\sigma}$ explicitly in Lemma \ref{p=3formulaB}. It is possible to extend this calculation to compute $B_{\sigma}$ for all primes $p$ and we will make this computation available in a forthcoming paper. 
(As seen in Remark \ref{n_non_prime_Kerdlog_remark}, the element $\dlog \Gamma_{\sigma}$ and \cite[10.5.2]{Anderson} do not determine $B_{\sigma}$ when $n$
is not prime so the calculation of $B_{\sigma}$ when $n$ is not prime will require further input.)
Combining the above, we obtain:

\begin{theorem}\label{thm:action3}
Let $p=3$ and $K = \QQ(\e_p)$.
The $\Gal{K}$-action on $H_1(U,Y; \ZZ/p)$ factors through $\Gal{K} \to \rGal{L}{K}$, 
where $L$ denotes the splitting field of $1-(1-x^p)^p$ (or equivalently of $x^6-3x^3+3$). 
Write $H_1(U,Y; \ZZ/p) \cong \ZZ_p[\e_0,\e_1]/\langle \e_0^p -1, \e_1^p-1\rangle$
and $\rGal{L}{K} \cong \ZZ/p \times \ZZ/p$.
Then $(c_0, c_1) \in \ZZ/p \times \ZZ/p$ acts on $\ZZ_p[\e_0,\e_1]/\langle \e_0^p -1, \e_1^p-1\rangle$ 
by multiplication by $B_{\sigma} = \sum_{i,j=0}^{p-1} b_{i,j} \e_0^i \e_1^j$ where
\begin{equation*}
\begin{aligned}
b_{0,0} &= 1+ c_0 - c_0^2\\
b_{0,1} &= c_1-c_0^2 \\
b_{1,1} &= -c_1-c_0^2.
\end{aligned}
\end{equation*}
and where the rest of the coefficients $b_{i,j}$ are determined by $b_{i,j}=b_{j,i}$, and the fact that $b_{0,0}+b_{0,1}+b_{0,2} = 1$, $b_{1,0}+b_{1,1}+b_{1,2} = 0$, and $b_{2,0}+b_{2,1}+b_{2,2} = 0$.
\end{theorem}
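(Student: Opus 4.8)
The structural half of the statement is already in place. By Anderson the $\Gal{K}$-action on $H_1(U,Y;\ZZ/p)$ factors through $\rGal{L}{K}$, with $L$ the splitting field of $1-(1-x^p)^p$; for $p=3$ Vandiver's conjecture holds trivially (the field $\QQ(\e_3+\e_3^{-1})=\QQ$ has class number one), so Proposition \ref{Pgrouprank} gives $\rGal{L}{K}\cong(\ZZ/3)^2$, and Corollary \ref{C_iso} identifies this group with $(\mu_3)^2$ via $\Phi=\kappa(\e)\times\kappa(1-\e^{-1})$. Fix $\sigma\in\Gal{K}$ and write $(c_0,c_1)\in\ZZ/3\times\ZZ/3$ for its image under $\Phi$, i.e. $\kappa(\e)(\sigma)=\e^{c_0}$ and $\kappa(1-\e^{-1})(\sigma)=\e^{c_1}$. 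Since $H_1(U,Y;\ZZ/p)$ is free of rank one over $(\ZZ/p)[\mu_p\times\mu_p]$ with generator $\beta$, all that remains is to compute the element $B_\sigma=\sum_{i,j}b_{i,j}\e_0^i\e_1^j$ with $\sigma\beta=B_\sigma\beta$ and to express its coefficients through $c_0,c_1$. The plan is to run Anderson's procedure for $n=3$: first evaluate $\dlog\Gamma_\sigma$ using Proposition \ref{Pkappac}, then invert the relation $\dsh(\Gamma_\sigma)=B_\sigma$ of \eqref{EdbarBsigma} by means of \eqref{d'def} and \cite[10.5.2,10.5.3]{Anderson}. This is the content of Lemma \ref{p=3formulaB}, and the theorem is a repackaging of it together with the identifications above.

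For the first step I would specialize Proposition \ref{Pkappac} to $n=3$, obtaining $\dlog\Gamma_\sigma = \kappa(1-\e^{-1})(\sigma)\,\e\,\dlog\e + \kappa(1-\e^{-2})(\sigma)\,\e^2\,\dlog\e$ modulo the stated indeterminacy, which by \cite[10.5.2]{Anderson} does not affect $B_\sigma$. The one point requiring care is that $\kappa(1-\e^{-2})$ is not one of the two coordinate characters of $\Phi$; but $\e^{-2}=\e$ and $1-\e=-\e(1-\e^{-1})$ in $K^*$, and $-1=(-1)^3$ is a cube in $K$ since $p$ is odd, so multiplicativity of $\kappa$ gives $\kappa(1-\e^{-2})=\kappa(\e)\cdot\kappa(1-\e^{-1})$ and hence $\kappa(1-\e^{-2})(\sigma)=\e^{c_0+c_1}$. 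Thus $\dlog\Gamma_\sigma$ becomes an explicit element of the relevant quotient of $\Omega\,\ZZ/3^{\rm sh}[\mu_3]$ depending on $\sigma$ only through $c_0$ and $c_1$.

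For the second step I would invert. Using the explicit description of $\dsh$ in \eqref{d'def} and the fact, from \cite[10.5.2,10.5.3]{Anderson}, that for $p$ prime $\dlog\Gamma_\sigma$ pins down $\Gamma_\sigma$ up to indeterminacy irrelevant to $B_\sigma$, I would solve for $\Gamma_\sigma\in\ZZ/3^{\rm sh}[\mu_3]$ and apply $\dsh$ to read off $B_\sigma$. For $p=3$ the ambient differential module is small and $\Gamma_\sigma$ has only a couple of undetermined entries, so this amounts to a short linear-algebra computation over $\ZZ/3$; it produces the stated values of $b_{0,0}$, $b_{0,1}$, $b_{1,1}$. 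The remaining coefficients then follow from the symmetry $b_{i,j}=b_{j,i}$, forced by the automorphism of $(X,U,Y)$ exchanging $x$ and $y$ (which swaps $\e_0$ and $\e_1$ but fixes $B_\sigma$), together with the row-sum identities $\sum_j b_{0,j}=1$, $\sum_j b_{1,j}=\sum_j b_{2,j}=0$, which come from the normalization of Anderson's $\Gamma_\sigma$ and the shape of the map $\dsh$.

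I expect the inversion to be the main obstacle: one must keep precise track of the strict Henselization $\ZZ/3^{\rm sh}$, identify the exact quotient of $\Omega\,\ZZ/3^{\rm sh}[\mu_3]$ in which $\dlog\Gamma_\sigma$ is well defined, determine which part of $\Gamma_\sigma$ is actually recoverable, and confirm that the resulting assignment $\sigma\mapsto B_\sigma$ really is the group action — which is consistent here precisely because $\rGal{L}{K}$ fixes $\mu_3\subset K$, so $\sigma\mapsto B_\sigma$ must in fact be a homomorphism $(\ZZ/3)^2\to\big((\ZZ/3)[\mu_3\times\mu_3]\big)^\times$. Two cheap checks are available at the end: at $(c_0,c_1)=(0,0)$ the formula gives $b_{0,0}=1$ and $b_{0,1}=b_{1,1}=0$, so $B_\sigma=1$ and the identity acts trivially; and $B_{(1,0)}$ and $B_{(0,1)}$ must have order dividing $3$ in the unit group, which one can verify directly by cubing in the group ring.
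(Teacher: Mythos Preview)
Your proposal is correct and follows essentially the same route as the paper: compute $\dlog\Gamma_\sigma$ via Proposition~\ref{Pkappac}, lift to $\Gamma_\sigma$ (normalized so its coefficients sum to $1$), and apply $\dsh$ --- exactly the computation carried out in Section~\ref{Sexplicit} and Lemma~\ref{p=3formulaB}, with your identity $\kappa(1-\e^{-2})=c_0+c_1$ matching the paper's observation $c_0=c_2-c_1$ in the Remark following that lemma. The only point you slightly understate is that the inversion is not pure linear algebra over $\ZZ/3$: the indeterminacy parameter $\alpha\in\Ash\simeq\bar{\FF}_3$ satisfies the cubic $\alpha^3-\alpha+c^3=0$, though as you anticipated the $\alpha$-dependence cancels in $B_\sigma$.
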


We have an analogous calculation of $H_1(U,Y; \ZZ/p)$ for all primes $p$ satisfying Vandiver's conjecture, which we will make available shortly.

Given the Galois action on $H_1(U,Y; \ZZ/n)$, we compute the Galois actions on $H_1(U; \ZZ/n)$ and $H_1(X; \ZZ/n)$ for all $n \geq 3$
in Section \ref{Shomnonrel}.

These computations can be used to study rational points on varieties in the following way. Let $Z$ be a scheme over $k$, and for simplicity assume that $Z$ has a rational point $b$. (This assumption is unnecessary, but it is satisfied in the situations encountered in this paper and it simplifies the exposition.) Choose a geometric point of $Z$ with image $b$ and let $\pi = \pi_1(Z_{\overline{k}},b)$ denote the geometric \'etale fundamental group of $Z$ based at the chosen geometric point. The generalized {\em Kummer map} associated to $Z$ and $b$ is the map $\kappa: Z(k) \to H^1(\Gal{k}, \pi)$ defined by $$\kappa (x) = [\sigma \mapsto \gamma^{-1} \sigma \gamma] $$ where $\gamma$ is an \'etale path from $b$ to a geometric point above $x$. 
Before returning to the potential application to rational points, we remark that the map $\kappa$ is functorial and the computation of  $\dlog \Gamma_{\sigma}$ in Proposition \ref{Pkappac} is obtained by applying $\kappa$ to the $K$-map $\mathbb{A}^1 - V(\sum_{i=0}^{n-1} x^i) \to \G_m^{n-1}$. 

From $\kappa$, we also obtain a map $\kappa^{\rm ab,p}: Z(k) \to H^1(\Gal{k}, \pi^{\rm ab} \otimes \ZZ_{p})$ defined to be the composition of $\kappa$ with the map $H^1(\Gal{k}, \pi) \to H^1(\Gal{k}, \pi^{\rm ab}\otimes \ZZ_{p})$ induced by the quotient map $\pi \to \pi^{\rm ab}\otimes \ZZ_{p}$, where $\ZZ_p$ denotes the $p$-adic integers. For $Z$ a curve or abelian variety over a number field,  $\kappa^{\rm ab,p}$ is well-known to be injective. Let $S$ denote a set of places of $k$ including the infinite places, all the primes of bad reduction of $Z$ and a place above $p$. Let $G_S= \pi_1(\mathcal{O}_k[1/S])$ denote the Galois group of the maximal extension of $k$ ramified only over $S$. Assume that $Z$ is proper to simplify exposition. Then $\kappa^{\rm ab,p}$ factors through a map $\kappa^{\rm ab,p}: Z(k) \to H^1(G_S, \pi^{\rm ab} \otimes \ZZ_{p})$. Let $\pi=[\pi]_1 \supseteq [\pi]_2 \supseteq \ldots$ denote the lower central series of the profinite group $\pi$, 
where $[\pi]_m$ is the closure of the subgroup $[[\pi]_{m-1}, \pi]$
generated by commutators of elements of $\pi$ with elements of $[\pi]_{m-1}$. 
Using work of Schmidt and Wingberg \cite{SW92}, Ellenberg \cite{Ellenberg_2_nil_quot_pi} defines a series of obstructions to a point of the Jacobian of a curve $Z$ lying in the image of the Abel-Jacobi map associated to $b$. The first of these obstructions is defined using a map $$\delta_2:  H^1(G_S, \pi^{\rm ab} \otimes \ZZ_{p}) \to H^2(G_S, ([\pi]_2/[\pi]_3) \otimes \ZZ_{p})$$ such that $\Ker \delta_2 \supset Z(k)$. Zarkhin defines a similar map \cite{zarhin}. The group $([\pi]_2/[\pi]_3) \otimes \ZZ_{p}$ fits into a short exact sequence $$ 0 \to \ZZ_p(1) \to (\pi^{\rm ab} \wedge \pi^{\rm ab}) \otimes \ZZ_{p}\to ([\pi]_2/[\pi]_3) \otimes \ZZ_{p} \to 0.$$ There are mod $p$ versions of $\delta_2$ and the generalized Kummer maps. A more detailed account of Ellenberg's obstructions is in \cite{Wickelgren3nil}.

Thus computations of $H^1(G_S, \pi^{\rm ab} \otimes \ZZ/p)$ and $H^2(G_S, (\pi^{\rm ab} \wedge \pi^{\rm ab}) \otimes \ZZ/p)$ give information about rational points. Groups closely related to $H^1(G_S, \pi^{\rm ab} \otimes \ZZ/p)$ also appear in \cite{C-NGJ} and \cite{McCallum94}. 

The final section of this paper includes calculations of $H^1({\rm Gal}(L/K), M)$ and $H^2({\rm Gal}(L/K), M)$ for $M$ each of 
$H_1(U, Y; \ZZ/n)$, $H_1(U; \ZZ/n)$, $H_1(X; \ZZ/n)$, and $H_1(U, \ZZ/n) \wedge H_1(U, \ZZ/n)$. These can be inserted into the Hochschild-Serre spectral sequence $$H^i(\rGal{L}{K}, H^j( G_{S,L} ,M)) \Rightarrow H^{i+j}(G_{S,K}, M),$$ where $G_{S,L}$ denotes the Galois group of the maximal extension of $L$ only ramified at places above $S$, and $G_{S,K} = G_S$. Since $H_1(U, Y; \ZZ/n)$, $H_1(U; \ZZ/n)$, $H_1(X; \ZZ/n)$ are $\pi^{\rm ab} \otimes \ZZ/n$ for $Z = U/Y$, $Z=U$ and $Z=X$ respectively, these are groups mentioned above, and appear in Ellenberg's obstructions. This is the subject of on-going work. 

\subsection{Notation}

Let $n \geq 3$ be an integer; often $n$ will be a prime $p$.
Let $\e$ be a fixed primitive $n$-th root of unity and $K = \QQ(\e)$.
For brevity, let $\Aa = \ZZ/n$, and let $\Ash$ denote the strict Henselization of $\Aa$. 
If $n=p$ is prime, then the field $\Aa$ is a Henselian local ring and its strict Henselization is the separable closure
$\Ash \simeq \Fpbar$. 

If $k$ is any number field, $\Gal{k}$ denotes the absolute Galois group of $k$. 

\begin{definition}\label{Kummer_map_def} 
Given a primitive $n$th root $\sqrt[n]{\theta}$ of $\theta \in k$ and $\sigma \in \Gal{k}$, then $\kappa(\theta) \sigma$ is the element of $A$ 
such that 
\[\sigma \sqrt[n]{\theta} = \zeta^{\kappa(\theta) \sigma} \sqrt[n]{\theta}.\]
\end{definition}

\begin{remark}\label{Kummer=generalized_Kummer_Gm}
The map $\kappa: k^* \to H^1(\Gal{k}, \ZZ/n(1))$ defined by letting $\kappa(\theta)$ be represented by the twisted homomorphism $\sigma \mapsto \kappa(\theta) \sigma$ is the generalized Kummer map of $\G_{m,k}$ with base point $1 \in \G_{m,k}(k)$. Here $\ZZ/n(1)$ is the Galois module with underlying group $\ZZ/n$ and Galois action given by the cyclotomic character. See, for example, \cite[12.2.1 Example 1]{Wickelgren3nil}.
\end{remark}

For $\theta \in K^*$ and $n=p$, the map $\kappa(\theta): \Gal{K} \to \ZZ/p$ is a homomorphism and is independent of the choice of $p$th root of $\theta$ because $\mu_p \subset K$.

\section{Anderson's results, revisited} \label{Ssurvey}

In this section, we recall results from \cite{Anderson} that are relevant for this paper. 
Recall that $K=\QQ(\e)$, that $U \subset \A^2_K$ denotes the affine Fermat curve over $K$ with equation
$x^n + y^n =1$,
and that $Y \subset U$ is the divisor defined by $xy=0$. 
The path $\beta: [0,1] \to U(\CC)$ given by $t \mapsto (\sqrt[n]{t},   \sqrt[n]{1-t})$, where $\sqrt[n]{-}$ denotes the real $n$th root, determines a singular $1$-simplex in the homology of $U$ relative to $Y$ whose class we denote by the same name.

For $m \in \NN$, let $\Lambda_m$ denote the group ring over $\Aa$ of the finite group $\mu_n(\C)^{\times (m+1)}$. 
Then $\Lambda_m$ has a natural $\Gal{K}$-action.
For $0 \leq i \leq m$, let $\e_i$ denote a primitive $n$th root of unity in the $i$th copy of $\mu_n(\C)$.  Then 
\[  \Lambda_m = \Aa [\e_0, \dots, \e_m ]/ (\e_0^n -1 , \dots, \e_m^n - 1). \]

There is an action of $\Lambda_1$ on $U$ given by $\zeta^i_0 \times \zeta^j_1: (x,y) \mapsto (\zeta^i_0 x, \zeta^j_1y)$.
This action stabilizes $Y$. Thus the relative homology group $H_1(U,Y; \Aa)$ is a $\Lambda_1$-module.
Note that $H_1(U,Y; \Aa)$ has rank $n^2$ over $A$. 

Anderson describes the $\Gal{K}$-action on $H_1(U,Y; \Aa)$.
First, \cite[Theorem 6]{Anderson} states that $H_1(U,Y; \Aa)$ is a free rank one module over $\Lambda_1$ 
generated by the class $\beta$. 

Specifically, $\sigma \in \Gal{K}$ acts $\Aa$-linearly, and
\[ \sigma \cdot ( \e_0^i \e_1^j \beta) = (\sigma\cdot \e_0^i) (\sigma \cdot \e_1^j) B_\sigma \beta, \]
where $B_\sigma $ is a unit in $\Lambda_1$ defined by
\[ \sigma \cdot \beta = B_{\sigma} \beta.\]
Thus to describe the $\Gal{K}$-action on $H_1(U,Y; \Aa)$, it is necessary and sufficient to describe the action on the element $\beta$.

Anderson also proves that the action of the absolute Galois group $\Gal{\Q}$ on $H_1(U, Y; \Aa)$ 
factors through a finite quotient. This result is a consequence of the analysis in the rest of the section.  
In particular, if $n$ is a prime $p$, then $\sigma \in \Gal{K}$ acts trivially on $H_1(U,Y; \Aa)$ if and only if 
$\sigma$ fixes the splitting field $L$ of the polynomial $f_p=1-(1-x^p)^p$, \cite[Section 10.5]{Anderson}.
In Section \ref{sec:GalLK}, we prove that 
$\rGal{L}{K}$ is an elementary abelian $p$-group of rank at most $(p+1)/2$.

Anderson highlights the following application of this result.
By \cite[Lemma, page 558]{Anderson}, there is a connection between the 
action of $\sigma \in \Gal{\Q}$ on $H_1(U,Y; \Aa)$ and the action of $\sigma$ on the fields of definition of points of a generalized Jacobian of $X$.

\begin{theorem}[\cite{Anderson}, Theorem 0] \label{L=coord_pt_genJac}
Let $S$ be the generalized Jacobian of $X$ with conductor $\infty$.
Let $b$ denote the $\Q$-rational point of $S$ corresponding to the difference of the points $(0,1)$ and $(1,0)$. 
The number field generated by the coordinates of the $n$th roots of $b$ in $S(\overline{\QQ})$ contains 
the splitting field $L$ of the polynomial $1-(1-x^n)^n$, with equality if $n$ is prime.
\end{theorem}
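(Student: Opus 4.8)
The plan is to deduce the theorem from Anderson's description of $H_1(U,Y;\Aa)$ as a $\Gal{\QQ}$-module, via three ingredients recalled above: the dictionary of \cite[Lemma, page 558]{Anderson} between the Galois action on $H_1(U,Y;\Aa)$ and the Galois action on division points of $S$; the identification, when $n=p$ is prime, of the kernel of the $\Gal{K}$-action on $H_1(U,Y;\Aa)$ with $\rGal{\overline{\QQ}}{L}$ \cite[Section 10.5]{Anderson}; and the formula $\dlog\Gamma_\sigma=\sum_{i=1}^{n-1}\kappa(1-\e^{-i})(\sigma)\,\e^i\dlog\e$ of Proposition~\ref{Pkappac}.

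First I would set up the geometry. The generalized Jacobian $S$ of $X$ with conductor $\infty$ is a semiabelian variety, an extension $0\to T\to S\to\mathrm{Jac}(X)\to 0$ with $T$ a torus of rank $n-1$, and it is the Albanese variety of $U$; hence $H_1(U_{\overline{\QQ}};\Aa)\cong S[n]$ as $\Gal{\QQ}$-modules, and (since $T$ is divisible) $S[n]$ surjects onto $\mathrm{Jac}(X)[n]$. The point $b=[(0,1)]-[(1,0)]$ lies in $S(\QQ)$, and $n^{-1}(b)\subset S(\overline{\QQ})$ is a $\Gal{\QQ}$-stable torsor under $S[n]$. Since the $2n$ points of $Y$ have coordinates in $K=\QQ(\e)$, the group $\Gal{K}$ fixes $H_0(Y;\Aa)$ pointwise, so in the exact sequence
\[0\to H_1(U;\Aa)\to H_1(U,Y;\Aa)\xrightarrow{q}\Ker\bigl(H_0(Y;\Aa)\to H_0(U;\Aa)\bigr)\to 0\]
one has $q(\beta)=\pm b$, and the fiber $q^{-1}(q(\beta))=\beta+H_1(U;\Aa)$ is a $\Gal{K}$-stable torsor under $H_1(U;\Aa)\cong S[n]$. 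Anderson's Lemma \cite[page 558]{Anderson} identifies this torsor $\Gal{\QQ}$-equivariantly with $n^{-1}(b)$, carrying $\beta$ to a chosen $n$th root $c_0$ of $b$.

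Next I would extract the field $F$ named in the theorem. Differences of $n$th roots of $b$ generate $S[n]$, so $F$ contains $\QQ(S[n])$; as $S[n]$ surjects onto $\mathrm{Jac}(X)[n]$, which carries the Galois-equivariant, nondegenerate Weil pairing into $\mu_n$, we get $K\subseteq\QQ(S[n])\subseteq F$, so $\rGal{\overline{\QQ}}{F}\subseteq\Gal{K}$. Now $\beta$ is a free generator of $H_1(U,Y;\Aa)=\Lambda_1\beta$ \cite[Theorem 6]{Anderson}, and for $\sigma\in\Gal{K}$ one has $\sigma(\e_0^i\e_1^j\beta)=\e_0^i\e_1^jB_\sigma\beta$ because $\Gal{K}$ acts trivially on $\Lambda_1$; hence for $\sigma\in\Gal{K}$ the following are equivalent: $\sigma$ acts trivially on all of $H_1(U,Y;\Aa)$; $B_\sigma=1$; $\sigma$ fixes every element of the torsor $\beta+H_1(U;\Aa)$ (for the last equivalence, evaluating at $\beta$ gives $B_\sigma=1$, and then $B_\sigma=1$ together with triviality of $\Gal{K}$ on $\Lambda_1$ also forces $\sigma$ to fix $H_1(U;\Aa)$ pointwise). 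Transporting along the identification of the previous paragraph, for $\sigma\in\Gal{K}$ we conclude $\sigma$ fixes $F$ $\iff$ $B_\sigma=1$. Thus $\rGal{\overline{\QQ}}{F}$ is exactly the kernel of the $\Gal{\QQ}$-action on the module $H_1(U,Y;\Aa)$.

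Finally I would compare $F$ with $L$. When $n=p$ is prime, \cite[Section 10.5]{Anderson} identifies this kernel with $\rGal{\overline{\QQ}}{L}$, whence $F=L$. For general $n$, suppose $\sigma$ lies in the kernel, so $B_\sigma=1$; by Anderson's analysis (\cite[Theorem 9, Theorem 10]{Anderson}, the relation $\dsh(\Gamma_\sigma)=B_\sigma$, and the fact that $\sigma\mapsto\dlog\Gamma_\sigma$ is recovered from the $\Gal{K}$-module $H_1(U,Y;\Aa)$, concretely from its quotient modeled on the relative homology of $(\A^1-V(\sum_{i=0}^{n-1}x^i),\{0,1\})$) this forces $\dlog\Gamma_\sigma=0$; then Proposition~\ref{Pkappac}, together with the linear independence of $\{\e^i\dlog\e\}_{i=1}^{n-1}$ in the relevant quotient of $\Omega\Ash[\mu_n]$, gives $\kappa(1-\e^{-i})(\sigma)=0$ for $i=1,\dots,n-1$, i.e.\ $\sigma$ fixes each $\sqrt[n]{1-\e^{-i}}$ and hence fixes $L$. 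Therefore $\rGal{\overline{\QQ}}{F}\subseteq\rGal{\overline{\QQ}}{L}$, i.e.\ $L\subseteq F$. The main obstacle is the identification used in the last two paragraphs: matching the topological torsor of relative-homology lifts of $b$ with the arithmetic torsor $n^{-1}(b)$ of $n$-division points of $S$ and checking its Galois-equivariance, which is the content of Anderson's Lemma on page~558. A secondary point, relevant only for composite $n$, is that one obtains the inclusion $L\subseteq F$ but not equality, precisely because $\dlog\Gamma_\sigma$ no longer determines all of $B_\sigma$ (cf.\ Remark~\ref{n_non_prime_Kerdlog_remark}).
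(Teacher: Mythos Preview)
The paper does not give its own proof of this statement: Theorem~\ref{L=coord_pt_genJac} is simply quoted from \cite[Theorem~0]{Anderson}, with the sentence preceding it pointing to \cite[Lemma, page~558]{Anderson} for the link between the $\Gal{\QQ}$-action on $H_1(U,Y;\Aa)$ and the Galois action on the fields of definition of division points of $S$. So there is nothing in the paper to compare your argument against line by line.

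That said, your proposal is a reasonable reconstruction of how one would deduce the theorem from the ingredients the paper assembles. The logical spine---identify $n^{-1}(b)$ with the torsor $\beta+H_1(U;\Aa)$ via Anderson's Lemma, observe that for $\sigma\in\Gal{K}$ fixing this torsor pointwise is equivalent to $B_\sigma=1$, and then invoke \cite[Section~10.5]{Anderson} for the prime case and the $\dlog\Gamma_\sigma$ computation (Proposition~\ref{Pkappac}) for the general inclusion---is sound. Two points deserve a little more care. First, the step ``$B_\sigma=1\Rightarrow\dlog\Gamma_\sigma=0$ modulo $\Ash\dlog\e_0$'' should be argued via: $B_\sigma=1$ forces $\Gamma_\sigma\in\LL$ by the uniqueness in Fact~\ref{fact:Gammaunique}, and then $\dlog$ carries $\LL$ into $\Ker(\dsh_\Omega)=\Ash\dlog\e_0$ (this holds for all $n$, not just primes; primality is only needed for the converse). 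Second, your identification of the arithmetic torsor $n^{-1}(b)$ with the homological torsor and its $\Gal{\QQ}$-equivariance is exactly the content of \cite[Lemma, page~558]{Anderson}, which you correctly flag as the main input you are taking on faith; without unpacking that lemma your argument remains a proof sketch rather than a self-contained proof.
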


Information on fields generated by points of the Jacobian of quotients of Fermat curves is also contained in \cite{C-NGJ},  \cite{Coleman98},  \cite{Greenberg}, and\cite{Tzermias}.

In the remainder of this section, we describe Anderson's method for determining $B_\sigma$. 
Let $b_{i,j}$ denote the coefficients of $B_\sigma$, so that 
\[B_\sigma= \sum_{0\leq i,j < n} b_{i,j} \e_0^i \e_1^j.\]
It will often be convenient to arrange the coefficients of $B_\sigma$ in an $n \times n$ matrix.

Let $w:\Lambda_1 \to \Lambda_1$ be the map induced by swapping the two copies of $\mu_n(\C)$, i.e. by swapping $\e_0$ and $\e_1$. Then $w$ preserves the units in $\Lambda_1$.
Let $(\Lambda_1^\times)^w$ denote the symmetric units, i.e., the units fixed by $w$. 
If $a_{i,j}\in \Aa$, then an element 
\[ \sum_{0\leq i,j < n} a_{i,j} \e_0^i \e_1^j \in \Lambda_1^\times \]
is in $(\Lambda_1^\times)^w$ precisely when $a_{i,j} = a_{j,i}$ for all $i,j$. 

\begin{fact}\label{fact:symmetry}
\cite[Theorem 7]{Anderson}
If $\sigma \in \Gal{\Q}$, then $B_{\sigma} \in (\Lambda_1^\times)^w$.
In other words, 
the coefficients of $B_\sigma$ are symmetric; $b_{i,j}=b_{j,i}$ for any $0\leq i,j <n$.
\end{fact}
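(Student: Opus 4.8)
\textbf{Proof sketch for Fact \ref{fact:symmetry}.}
The plan is to exploit the symmetry of the Fermat equation under interchanging $x$ and $y$. The affine Fermat curve $U\colon x^n+y^n=1$ carries the involution $\tau\colon (x,y)\mapsto (y,x)$, which is defined over the prime field, hence over $\Q$, and which preserves the divisor $Y=\{xy=0\}$. Thus $\tau$ induces an $\Aa$-linear involution $\tau_*$ on $H_1(U,Y;\Aa)$. Two properties of $\tau_*$ will be used. First, conjugating the automorphism of $U$ given by $\e_0\colon (x,y)\mapsto(\e x,y)$ by $\tau$ yields the automorphism $\e_1\colon (x,y)\mapsto(x,\e y)$, and symmetrically; hence $\tau\,\e_0=\e_1\,\tau$ and $\tau\,\e_1=\e_0\,\tau$ as automorphisms of $U$, so on homology $\tau_*$ is $w$-semilinear over $\Lambda_1$: $\tau_*(\gamma\cdot m)=w(\gamma)\cdot \tau_*(m)$ for all $\gamma\in\Lambda_1$ and $m\in H_1(U,Y;\Aa)$. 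Second, since $\tau$ is defined over $\Q$, it commutes with the $\Gal{\Q}$-action on $U$, so $\sigma\circ\tau_*=\tau_*\circ\sigma$ on $H_1(U,Y;\Aa)$ for every $\sigma\in\Gal{\Q}$.

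Next I would compute $\tau_*\beta$. Since $\beta$ is represented by the path $t\mapsto(\sqrt[n]{t},\sqrt[n]{1-t})$, one has $\tau(\beta(t))=(\sqrt[n]{1-t},\sqrt[n]{t})=\beta(1-t)$, so $\tau_*\beta$ is the reversal of the relative $1$-simplex $\beta$; its endpoints $(0,1)$ and $(1,0)$ lie in $Y$ and are interchanged, so this reversal is legitimate in relative homology, and the concatenation of $\beta$ with its reverse is a null-homotopic loop, whence $\tau_*\beta=-\beta$ in $H_1(U,Y;\Aa)$. The point to retain is that $\tau_*\beta$ is a $\Gal{\Q}$-fixed \emph{scalar} unit multiple of $\beta$.

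Finally I would combine these facts. On one hand, $\sigma(\tau_*\beta)=\sigma(-\beta)=-\,\sigma\beta=-B_\sigma\beta$. On the other hand, using $w$-semilinearity of $\tau_*$ together with $\tau_*\beta=-\beta$, and writing $B_\sigma=\sum_{i,j} b_{i,j}\e_0^i\e_1^j$, we get $\tau_*(\sigma\beta)=\tau_*(B_\sigma\beta)=\sum_{i,j} b_{i,j}\,w(\e_0^i\e_1^j)\cdot\tau_*\beta=-\sum_{i,j} b_{i,j}\,\e_0^j\e_1^i\,\beta=-w(B_\sigma)\beta$. Since $\sigma$ and $\tau_*$ commute, $-B_\sigma\beta=-w(B_\sigma)\beta$, and because $\beta$ freely generates $H_1(U,Y;\Aa)$ over $\Lambda_1$ by \cite[Theorem 6]{Anderson}, we conclude $B_\sigma=w(B_\sigma)$, i.e. $b_{i,j}=b_{j,i}$ for all $i,j$, so $B_\sigma\in(\Lambda_1^\times)^w$.

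The only step needing genuine care is the identification $\tau_*\beta=-\beta$: one must verify both that $\tau$ reverses $\beta$ as a chain relative to $Y$ and that this reversal contributes exactly the sign $-1$ and no hidden element of $\Lambda_1$ (equivalently, that the unit $u$ with $\tau_*\beta=u\beta$ is a Galois-fixed scalar). Note that even this sign is inessential: for any $\Gal{\Q}$-fixed scalar unit $c$ with $\tau_*\beta=c\beta$, the displayed computation gives $c\,B_\sigma\beta=c\,w(B_\sigma)\beta$, and cancelling $c$ yields the same conclusion. Everything else is formal manipulation in the free rank-one $\Lambda_1$-module $H_1(U,Y;\Aa)$.
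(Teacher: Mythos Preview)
Your argument is correct. The paper does not supply its own proof of this fact: it is stated as a citation of \cite[Theorem~7]{Anderson} and used as input. So there is no in-paper proof to compare against; what you have written is a clean self-contained justification of the cited result.

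Your approach---using the $\Q$-rational involution $(x,y)\mapsto(y,x)$, its $w$-semilinearity on $H_1(U,Y;\Aa)$, the identification $\tau_*\beta=-\beta$, and commutation with Galois---is exactly the natural geometric argument and is essentially how one would expect Anderson's proof to go. The only delicate point, as you correctly flag, is checking $\tau_*\beta=-\beta$ (or more weakly that $\tau_*\beta=c\beta$ for a scalar $c\in\Aa^\times$); your verification via the explicit parametrization $t\mapsto(\sqrt[n]{t},\sqrt[n]{1-t})$ and the fact that path reversal negates a relative $1$-class is sound, and your remark that any Galois-fixed scalar $c$ would do is a nice robustness check. One small stylistic comment: since the paper already uses $\tau$ for a Galois element in Section~\ref{Scohom}, you may wish to rename the involution (e.g.\ $\iota$) to avoid a clash if this were to be inserted into the text.
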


Next, consider the map $d^{\prime\prime}:(\Lambda_1^\times)^w \to \Lambda_2^\times$ given by
\[ \sum a_{i,j} \e_0^i \e_1^j \mapsto \frac{\left( \sum a_{i,j} \e_0^j \e_1^i \e_2^j \right) \left( \sum a_{i,j} \e_0^i \e_2^j \right) }{ \left( \sum a_{i,j} \e_0^i \e_1^j \e_2^j \right) \left( \sum a_{i,j} \e_1^i \e_2^j \right)}. \]
By \cite[Theorem 7]{Anderson}, $B_\sigma$ is in the kernel of $d^{\prime\prime}$. In particular, there is an equality in 
$\Lambda_2^\times$, given by
\[\left( \sum b_{i,j} \e_0^j \e_1^i \e_2^j \right) \left( \sum b_{i,j} \e_0^i \e_2^j \right) = \left( \sum b_{i,j} \e_0^i \e_1^j \e_2^j \right) \left( \sum b_{i,j} \e_1^i \e_2^j \right).\]
This gives, via the map $\Lambda_2^\times \to \Lambda_1^\times$ sending $\e_2 \mapsto 1$, the equality
\[\left( \sum b_{i,j} \e_0^j \e_1^i \right) \left( \sum b_{i,j} \e_0^i  \right) = \left( \sum b_{i,j} \e_0^i \e_1^j \right) \left( \sum b_{i,j} \e_1^i\right). \]
By Fact \ref{fact:symmetry}, the first terms on each side cancel giving
\[ \sum b_{i,j} \e_0^i  = \sum b_{i,j} \e_1^j. \]
This is only possible if the following is true.
\begin{fact}\label{fact:zerosum} \cite[10.5.4]{Anderson}
If $1 \leq i \leq n$, then
$ \sum_{0\leq j <n} b_{i,j} =0$.
\end{fact}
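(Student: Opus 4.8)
The plan is to extract the conclusion from the identity reached just above, namely $\sum_{0\le i,j<n} b_{i,j}\e_0^i = \sum_{0\le i,j<n} b_{i,j}\e_1^j$ in $\Lambda_1$, using nothing more than the $\Aa$-module structure of $\Lambda_1$. The key point is that $\Lambda_1$ is the group ring over $\Aa$ of $\mu_n(\C)^{\times 2}$, hence a \emph{free} $\Aa$-module with basis the monomials $\e_0^i\e_1^j$ for $0\le i,j<n$; this holds for the ring $\Aa=\ZZ/n$ with no integral-domain hypothesis, so the argument will work uniformly for all $n\ge 3$.

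First I would expand both sides of the identity in this basis. The left-hand side is $\sum_{i=0}^{n-1}\big(\sum_{j=0}^{n-1}b_{i,j}\big)\e_0^i\e_1^0$, supported on the monomials $\e_0^i\e_1^0$, and the right-hand side is $\sum_{j=0}^{n-1}\big(\sum_{i=0}^{n-1}b_{i,j}\big)\e_0^0\e_1^j$, supported on the monomials $\e_0^0\e_1^j$; the two supports meet only in $\e_0^0\e_1^0=1$. Next I would equate coefficients of each basis monomial. For $\e_0^i\e_1^0$ with $1\le i\le n-1$ the coefficient on the left is $\sum_{0\le j<n}b_{i,j}$, while on the right it is $0$, so freeness forces $\sum_{0\le j<n}b_{i,j}=0$ for every such $i$, which is the assertion. (Comparing the monomials $\e_0^0\e_1^j$ for $j\ne 0$ gives the companion vanishing of the column sums, and the coefficient of $1$ merely returns $\sum_j b_{0,j}=\sum_i b_{i,0}$, which is automatic from Fact \ref{fact:symmetry}.)

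I do not anticipate a genuine obstacle here. The one point to get right is that one must \emph{not} argue that ``a polynomial in $\e_0$ which equals a polynomial in $\e_1$ must be constant'' by any degree or cancellation reasoning, since $\ZZ/n$ has zero divisors when $n$ is composite; the clean and uniform route is exactly the one above --- invoke that $\Lambda_1$ is $\Aa$-free and compare coordinates.
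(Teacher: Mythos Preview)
Your argument is correct and is exactly the approach the paper takes: the paper simply writes ``This is only possible if the following is true'' after the displayed equality $\sum b_{i,j}\e_0^i=\sum b_{i,j}\e_1^j$, and you have spelled out the one-line justification by comparing coefficients in the free $\Aa$-basis $\{\e_0^i\e_1^j\}$ of $\Lambda_1$. Your care about not invoking any integral-domain property of $\Aa$ is appropriate, and note that the upper bound in the hypothesis should be read as $1\le i\le n-1$ (the case $i=n$ is a typo, as is clear from the surrounding discussion).
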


In other words, the entries of each column of the matrix $B_\sigma$ sum up to zero, for all but the 
zeroth column.  By Fact \ref{fact:symmetry}, the entries of each row of the matrix $B_\sigma$ also sum up to zero, 
for all but the zeroth row.  

Furthermore, consider the map $d^\prime:\Lambda_0^\times \to (\Lambda_1^\times)^w$ given by
\begin{equation}\label{d'def}\sum a_{i} \e_0^i \mapsto \frac{\left( \sum a_i \e_0^i\right) \left( \sum a_i \e_1^i\right)}{\left( \sum a_i \e_0^i \e_1^i\right)}, \end{equation}
as well as its extension $\dsh : \bar\Lambda_0^\times \to (\bar\Lambda_1^\times)^w$, where 
$\bar\Lambda_i = \Lambda_i \otimes_{\Aa} \Ash$. 
The kernel $\LL$  is determined in \cite[Proposition 8.3.1]{Anderson}; 
when $n=p$ is prime, it is the cyclic subgroup of order $p$ multiplicatively generated by $\e_0$. 

\begin{fact}\label{fact:Gammaunique} \cite[Theorem 9]{Anderson}
Let $n \geq 3$ and let $\LL$ denote the kernel of $\dsh$.
In $\bar\Lambda_0^\times / \LL$, 
there exists a unique element $\Gamma_\sigma$ which maps to $B_\sigma$ under $\dsh$. 
\end{fact}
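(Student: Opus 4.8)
The plan is to establish existence and uniqueness separately, trading on the structure of the map $\dsh$ and the known description of its kernel $\LL$. For uniqueness, suppose $\Gamma_\sigma$ and $\Gamma_\sigma'$ both map to $B_\sigma$ under $\dsh$. Then $\Gamma_\sigma / \Gamma_\sigma'$ lies in $\Ker(\dsh) = \LL$, which is exactly the statement that $\Gamma_\sigma$ and $\Gamma_\sigma'$ represent the same class in $\bar\Lambda_0^\times / \LL$. So uniqueness in the quotient is essentially formal once one cites \cite[Proposition 8.3.1]{Anderson} for the identification of $\LL$ — indeed uniqueness holds for any $n \geq 3$ regardless of the exact shape of $\LL$, since it only uses that $\LL$ is by definition the kernel. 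The real content is therefore existence: one must show $B_\sigma$ actually lies in the image of $\dsh$.

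For existence, I would argue that the composite $\bar\Lambda_0^\times \xrightarrow{\dsh} (\bar\Lambda_1^\times)^w \xrightarrow{d''} \bar\Lambda_2^\times$ is trivial, so that $\im(\dsh) \subseteq \Ker(d'')$, and then show $B_\sigma \in \Ker(d'')$ together with whatever additional normalization pins down the image precisely. The inclusion $\im(\dsh) \subseteq \Ker(d'')$ is a direct computation: apply $d''$ to $d'(\sum a_i \e_0^i)$ and check that every factor cancels in the quotient defining $\Lambda_2^\times$; this is the kind of bookkeeping with the substitutions $\e_i \mapsto \e_j \e_k$ that Anderson sets up, and one expects it to collapse by the cocycle-type identity relating $d'$ and $d''$ (a bar-resolution differential). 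That $B_\sigma \in \Ker(d'')$ is precisely Fact stated just above from \cite[Theorem 7]{Anderson}, so it may be invoked directly. The subtle point is that $\Ker(d'')$ might be strictly larger than $\im(\dsh)$; here is where one uses that we have passed to $\bar\Lambda_0$ (the strict Henselization), i.e. that $\Ash$ is separably/algebraically closed. Over $\Ash$ the relevant cohomology of the complex $\bar\Lambda_0^\times \to (\bar\Lambda_1^\times)^w \to \bar\Lambda_2^\times$ vanishes — concretely, $\Ash^\times$ is divisible (it is $\Fpbar^\times$ when $n=p$, or more generally $(\ZZ/n)^{sh,\times}$), which kills the obstruction to lifting an element of $\Ker(d'')$ back along $\dsh$. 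This is exactly the role the strict Henselization plays in \cite{Anderson}, and citing \cite[Theorem 9]{Anderson} legitimately short-circuits the homological input; but if one wants a self-contained argument one reconstructs the exactness of this three-term complex by hand, which is the main obstacle.

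Concretely, then, the steps are: (1) invoke \cite[Proposition 8.3.1]{Anderson} to identify $\LL$, which for $n = p$ prime is the order-$p$ group generated by $\e_0$; (2) prove uniqueness in $\bar\Lambda_0^\times/\LL$ by the kernel argument above; (3) prove $d'' \circ \dsh$ is trivial by the substitution computation, giving $\im \dsh \subseteq \Ker d''$; (4) invoke Fact from \cite[Theorem 7]{Anderson} to get $B_\sigma \in \Ker d''$; (5) upgrade $\im\dsh \subseteq \Ker d''$ to an equality using divisibility of $\Ash^\times$ — equivalently, exhibit the three-term sequence as exact at the middle, which is the homological heart of Anderson's analysis. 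The main obstacle is step (5): verifying that nothing in $\Ker(d'')$ escapes the image of $\dsh$, i.e., that the failure of surjectivity of $\dsh$ onto $\Ker(d'')$ over $\Aa$ is repaired by base change to $\Ash$. Once one has (5), the theorem follows by combining it with step (2).
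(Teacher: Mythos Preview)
The paper does not prove this statement at all: it is stated as a \emph{Fact} with a bare citation to \cite[Theorem 9]{Anderson}, and the surrounding text moves on immediately to use $\Gamma_\sigma$. So there is no proof in this paper against which to compare your proposal.

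That said, your sketch is a reasonable outline of what Anderson's argument must contain. You are right that uniqueness is formal once $\LL$ is defined as $\Ker(\dsh)$, and that the substance lies entirely in existence, i.e.\ in showing $B_\sigma \in \im(\dsh)$. Your identification of the key input --- that over the strict Henselization the complex $\bar\Lambda_0^\times \to (\bar\Lambda_1^\times)^w \to \bar\Lambda_2^\times$ becomes exact in the middle, and that this is where passage to $\Ash$ earns its keep --- is correct in spirit. One caution: your step (5) appeals to ``divisibility of $\Ash^\times$'' as if that alone repairs exactness, but the actual mechanism in Anderson's paper is more structured than a one-line divisibility argument; it involves his analysis of the Koszul-type complex built from the maps $d', d''$ and the specific combinatorics of $\mu_n^{\times m}$. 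If you were to write this out in full you would need to reconstruct that analysis, not merely invoke divisibility. As a proof \emph{proposal} pointing to the right citation and isolating the nontrivial step, though, your write-up is adequate for a result this paper treats as a black box.
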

In the sequel, the notation $\Gamma_\sigma$ will also be used to denote an element of $\bar \Lambda_0^\times$ representing this coset in $\bar\Lambda_0^\times / \LL$.

\begin{fact} \label{Faugmentation}
\cite[9.6 and 10.5.2]{Anderson} 
The difference $B_\sigma - 1$ lies in the augmentation ideal $(1-\epsilon_0)(1-\epsilon_1) \Lambda_1$. 
\end{fact}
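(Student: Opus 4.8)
Proof proposal for Fact \ref{Faugmentation} ($B_\sigma - 1 \in (1-\e_0)(1-\e_1)\Lambda_1$).

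The plan is to combine the structural facts already established—namely that $B_\sigma$ is a symmetric unit (Fact \ref{fact:symmetry}), that its non-zeroth rows and columns sum to zero (Fact \ref{fact:zerosum}), and that it arises as $\dsh(\Gamma_\sigma)$ (Fact \ref{fact:Gammaunique})—with the observation that $(1-\e_0)(1-\e_1)\Lambda_1$ is precisely the kernel of the ring homomorphism $\Lambda_1 \to \Lambda_0 \times_\Aa \Lambda_0$ obtained by sending $\e_1 \mapsto 1$ on one factor and $\e_0 \mapsto 1$ on the other. Concretely, writing $B_\sigma = \sum_{i,j} b_{i,j}\e_0^i\e_1^j$, membership in $(1-\e_0)(1-\e_1)\Lambda_1$ is equivalent to the two conditions that $\sum_j b_{i,j} = 0$ for every fixed $i$ \emph{and} $\sum_i b_{i,j} = 0$ for every fixed $j$—the ``all rows and all columns sum to zero'' condition. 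So the task reduces to upgrading Fact \ref{fact:zerosum} from ``all rows/columns except the zeroth'' to ``all rows/columns including the zeroth,'' and then checking that the corresponding constant term works out so that $B_\sigma - 1$ (rather than $B_\sigma$) lies in the ideal.

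First I would record the elementary lemma: for a symmetric matrix $(b_{i,j})$ whose rows indexed $1,\dots,n-1$ all sum to zero, the zeroth row sums to zero if and only if the grand total $\sum_{i,j} b_{i,j}$ equals the zeroth-column-sum of the zeroth row's complement—more usefully, by symmetry the zeroth row sum equals the zeroth column sum, and subtracting the (vanishing) sums of rows $1,\dots,n-1$ from the grand total shows the zeroth row sum equals $\sum_i b_{i,0} = $ zeroth column sum; so it suffices to pin down a single number, say $S := \sum_j b_{0,j} = \sum_i b_{i,0}$, and show $S = 1$, since then every row and column including the zeroth sums to... wait—here the subtlety is that the zeroth row need not sum to zero; rather it should sum to $1$, matching the ``$B_\sigma \equiv 1$'' normalization. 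Indeed, applying the augmentation $\e_0 \mapsto 1, \e_1\mapsto 1$ to the defining relation $\sigma\cdot\beta = B_\sigma\beta$ and using that $\sigma$ acts trivially on the augmentation $H_1(U,Y;\Aa) \to \Aa$ (equivalently, $B_\sigma$ maps to $1$ under the total augmentation $\Lambda_1 \to \Aa$) gives $\sum_{i,j} b_{i,j} = 1$. Combining this grand-total identity with Fact \ref{fact:zerosum} forces the zeroth row sum (= zeroth column sum) to equal $1$, so $B_\sigma - 1$ has all row sums and all column sums zero, which is exactly membership in $(1-\e_0)(1-\e_1)\Lambda_1$.

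The one genuine input beyond bookkeeping is the claim that $B_\sigma$ maps to $1$ under the total augmentation $\Lambda_1 \to \Aa$; I would get this from Anderson \cite[9.6]{Anderson}, or directly: the $\Lambda_1$-action on the rank-one relative homology specializes, under the coinvariants/augmentation $H_1(U,Y;\Aa)_{\mu_n\times\mu_n}$, to the class of the $1$-simplex $\beta$ in $H_1(U,Y;\Aa)$ modulo the $\mu_n\times\mu_n$-action, on which $\Gal{K}$ acts trivially because $\partial\beta$ connects the rational points $(1,0)$ and $(0,1)$ whose classes are Galois-fixed; hence $\sigma\cdot\bar\beta = \bar\beta$, i.e. the total augmentation of $B_\sigma$ is $1$. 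I expect this verification—that the total augmentation of $B_\sigma$ is $1$ and not merely that $B_\sigma$ is a unit—to be the main (though modest) obstacle; once it is in hand, the rest is the linear-algebra manipulation of row and column sums sketched above, combined with the identification of $(1-\e_0)(1-\e_1)\Lambda_1$ as the simultaneous kernel of the two partial augmentations.
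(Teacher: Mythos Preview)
The paper does not actually prove this statement; it is recorded as a ``Fact'' with a citation to Anderson \cite[9.6 and 10.5.2]{Anderson} and no argument is given. So there is no paper proof to compare against, and your task is really to supply one.

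Your overall strategy is correct: the ideal $(1-\e_0)(1-\e_1)\Lambda_1$ is exactly the set of elements whose every row sum and every column sum vanishes, so the claim reduces to showing that all row and column sums of $B_\sigma - 1$ are zero. Facts \ref{fact:symmetry} and \ref{fact:zerosum} handle the non-zeroth rows and columns, and the remaining input is that the total augmentation of $B_\sigma$ equals $1$.

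The weak point is your ``direct'' justification of this last input. You invoke the $\mu_n\times\mu_n$-coinvariants of $H_1(U,Y;\Aa)$ and then justify triviality of the Galois action there by citing the rational endpoints of $\partial\beta$. These are two different mechanisms, and the sentence as written does not connect them: knowing that $\partial\beta$ is Galois-fixed tells you about $\ker\delta$, not directly about the coinvariants. The argument can be repaired, but more to the point it can be streamlined so that Facts \ref{fact:symmetry} and \ref{fact:zerosum} are not even needed. Since $\delta: H_1(U,Y;\Aa) \to H_0(Y;\Aa)$ is Galois-equivariant and $\delta(\beta) = [(1,0)] - [(0,1)]$ is fixed by $\Gal{K}$ (both points being $K$-rational), one has $\delta\bigl((B_\sigma - 1)\beta\bigr) = 0$. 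The computation of $\delta$ carried out later in the paper as Proposition \ref{Phomaffine} (which does not depend on Fact \ref{Faugmentation}) identifies $\ker\delta$ with $\{W\beta : W \in (1-\e_0)(1-\e_1)\Lambda_1\}$, and the result follows immediately. This is the argument your ``$\partial\beta$ connects rational points'' remark is reaching for; routing it through the coinvariants is an unnecessary detour.
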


Consider the element $\Gamma_\sigma$ such that 
\begin{equation} \label{EdbarBsigma}
\dsh (\Gamma_\sigma) = B_\sigma.
\end{equation}
By Fact \ref{fact:Gammaunique}, in order to determine $B_\sigma$, it suffices to find the preimage $\Gamma_\sigma$. 
To accomplish this, Anderson looks at the logarithmic derivative homomorphisms from the groups of units $\Lambda_k^\times$ to the K\"ahler differentials $\Omega(\Lambda_k)$. This has the geometric meaning of comparing with ``the circular motive," where the Galois action is more transparent.

There is a commutative square
\[\xymatrix{ 
\bar\Lambda_0^\times \ar[r]^{d^\prime} \ar[d]_{\dlog} & (\bar \Lambda_1^\times)^w \ar[d]^{\dlog} \\
\Omega(\bar\Lambda_0) \ar[r] &\Omega(\bar \Lambda_1)^w,
} \]
where the bottom horizontal map is defined analogously to $d^\prime$. Note that for each $m$, the $\bar\Lambda_m$-module $\Omega(\bar\Lambda_m)$ is free on generators $\{\dlog \e_i\}_{0\leq i \leq m}$. 

Here is some notation needed to describe $\dlog \Gamma_\sigma$.
Let $\tilde{V} = \BA^1 - \mu_n$ and let $V=\tilde{V} \cup \{1\}$.  
Let $\lambda_0$ be a small counterclockwise loop around $1$.
Choose the isomorphism \[H_1(\tilde{V};A) = A[\mu_n] \lambda_0 \simeq \Omega A[\Lambda_0],\]
where $\lambda_0 \mapsto \frac{d\epsilon_0}{\epsilon_0}$.

Consider the exact sequence from \cite[\S 9]{Anderson}
\[0 \to A \lambda_0 \to H_1(\tilde{V}; A) \to H_1(V; A) \to 0, \]
or 
\begin{equation}\label{eq:sesH1V}0 \to A \frac{d\epsilon_0}{\epsilon_0} \to \Omega A[\Lambda_0] \to H_1(V; A) \to 0,\end{equation}
which identifies $H_1(V; \Aa) $ as a quotient of $\Omega(\Lambda_0)$. 

Let $Z$ denote the subscheme of $V$ defined by the vanishing of $x_0(1-x_0)$, i.e., the points $0$ and $1$ in $V$. Let $\psi \in H_1(V,Z; \Aa)$ denote the homology class represented by the cycle given by the interval $[0,1]$. 
Let $(\sigma-1)\psi$ denote the cycle given by concatenating the path $\sigma \psi$ and the path $\psi$ traveled in reverse. Since $\Gal{K}$ fixes the endpoints of $\psi$, the cycle $(\sigma-1)\psi$ represents a class in 
$H_1(V; \Aa) = H_1(V, \emptyset; \Aa)$. 

Let $\Psi_\sigma$ denote the coset in $\Omega(\Lambda_0) / \Aa \dlog \e_0$ which corresponds to the homology class 
of $(\sigma-1)\psi$ under \eqref{eq:sesH1V}.
The following theorem computes $\dlog\Gamma_{\sigma}$ to be $\Psi_\sigma$.

\begin{theorem}\label{fact:dlogPsi} \cite[Theorem 10]{Anderson}
$\dlog\Gamma_\sigma \in \Omega(\bar \Lambda_0)$ represents the  $\Ash \dlog \e_0$-coset $\Psi_\sigma$.
\end{theorem}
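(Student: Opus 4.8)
The plan is to compute both sides of the desired equality as classes of explicit $1$-cycles in the punctured plane $V = \BA^1 - \mu_n$, using that the inclusion $\tilde V \hookrightarrow \BA^1$ together with the residue-style identification $\lambda_i \mapsto d\epsilon_0/\epsilon_0$ turns homology classes into Kähler differentials. First I would recall from Anderson's construction that $\dlog\Gamma_\sigma$ arises by applying $\dlog$ to the factorization of $B_\sigma$ through $d'$ and tracing through the commutative square relating $d'$ on units to its differential-form analogue; the content of the theorem is that this abstract element is represented by the \emph{geometric} cocycle $(\sigma-1)\psi$, where $\psi$ is the interval $[0,1]$ viewed in $H_1(V,Z;\Aa)$. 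So the real task is to identify the image of $\dlog\Gamma_\sigma$ in $H_1(V;\Aa)$ under \eqref{eq:sesH1V} with the homology class of $(\sigma-1)\psi$.

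The key steps, in order: (1) Express $\beta: t\mapsto(\sqrt[n]{t},\sqrt[n]{1-t})$ and its translates under the $\mu_n\times\mu_n$-action as a covering of the interval path in the base $\BA^1_t$; the relation $\sigma\beta = B_\sigma\beta$ in $H_1(U,Y;\Aa)$ then projects, under the degree-$n^2$ map $U\to \BA^1$, $(x,y)\mapsto x^n$ (or rather to the relevant quotient giving $V$), to a corresponding relation for $\psi$ in the base. (2) Relate $B_\sigma$ on the Fermat side to $\Gamma_\sigma$ via the maps $d'$, $d''$ and the kernel computation (Fact \ref{fact:Gammaunique}), which is exactly the "circular motive" comparison Anderson sets up — this is where the factorization $\dsh(\Gamma_\sigma)=B_\sigma$ is used and where $\Gamma_\sigma$ gets interpreted as governing the Galois action on $n$th roots, i.e. on fibers of $\BA^1_t \to \BA^1_t$, $t\mapsto$ (appropriate power). (3) Take $\dlog$ of everything and use the commutative square together with the freeness of $\Omega(\bar\Lambda_m)$ on the $\dlog\e_i$ to convert the multiplicative identity into an additive one in $\Omega(\bar\Lambda_0)$, modulo $\Ash\dlog\e_0$. (4) Independently compute the class of $(\sigma-1)\psi$ in $H_1(V;\Aa)\cong \Omega(\Lambda_0)/\Aa\dlog\e_0$: the cycle $\sigma\psi \cdot \psi^{-1}$ is a loop, and its homology class is detected by its winding numbers around the punctures $\mu_n$, which are precisely the Kummer-type quantities recording how $\sigma$ permutes the branches of $\sqrt[n]{1-t}$ (equivalently $\sqrt[n]{1-\zeta^{-i}}$ data) as $t$ traverses $[0,1]$. (5) Match the two computations coefficient-by-coefficient against the basis $\{\e_0^i\dlog\e_0\}$.

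I expect step (2)–(3), the passage from the Fermat-curve relation $\sigma\beta=B_\sigma\beta$ to a clean statement about the base curve $V$ and its logarithmic derivative, to be the main obstacle: one must carefully track the three maps $d'$, $d''$, $\dsh$ and the non-canonical choice of representative $\Gamma_\sigma$ modulo $\LL = \langle \e_0\rangle$, and verify that the indeterminacy by $\Ash\dlog\e_0$ in the statement is exactly what absorbs the ambiguity in $\Gamma_\sigma$ (so that the coset $\Psi_\sigma$ is well-defined and the equality is meaningful). The winding-number computation in step (4) is conceptually straightforward — it is a concatenation-of-paths argument in the homotopy of $V(\CC)$, made rigorous by comparison with the topological fundamental group and then specialized mod $n$ — but writing it so that the identification with the algebraically-defined $\Psi_\sigma$ is manifestly Galois-equivariant requires care. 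Since Anderson has carried out essentially this analysis in \cite[Theorem 10]{Anderson}, for the purposes of this paper I would present the argument as a review of that proof, emphasizing the point (needed downstream in Proposition \ref{Pkappac}) that $\Psi_\sigma$ is the image of the interval cocycle under the explicit map to $\Omega(\Lambda_0)/\Aa\dlog\e_0$, and defer the unmodified details to the reference.
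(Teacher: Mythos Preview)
The paper does not prove this statement at all: Theorem~\ref{fact:dlogPsi} is simply quoted from \cite[Theorem~10]{Anderson} with no argument, sketch, or review of the proof given in the present paper. Your proposal is therefore not comparable to the paper's ``proof,'' since there is none; you are supplying a multi-step outline where the authors supply only a citation. In that sense your final sentence---present the argument as a review and defer the details to the reference---is already more than the paper actually does, and the preceding five-step plan is entirely surplus to what appears here.

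As for the content of your sketch itself: it is a plausible high-level description of how Anderson's argument is organized (covering of the interval by $\beta$, the $d'$/$\dsh$ factorization, passage to $\dlog$, and a winding-number identification of $(\sigma-1)\psi$), and you correctly identify that the $\Ash\dlog\e_0$ indeterminacy is what absorbs the ambiguity in the choice of $\Gamma_\sigma$. But none of this is needed for the paper at hand, which uses Theorem~\ref{fact:dlogPsi} purely as a black box input to Corollary~\ref{cor:GammaPsi} and Proposition~\ref{prop:BfromPsi}. If you are writing in the style of this paper, the correct move is to state the theorem, cite Anderson, and move on.
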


For this paper, the importance of Theorem \ref{fact:dlogPsi} lies in the geometric description of $\Psi_\sigma$. 
This description shows that $\sigma \mapsto \Psi_\sigma$ is the image of a rational point under a generalized Kummer map of the sort which arises in the section conjecture. We use this observation to compute $\Psi_\sigma$ in Section \ref{S_psi}. By Theorem \ref{fact:dlogPsi}, we have therefore also computed $\dlog\Gamma_\sigma$.

To give a complete description of $H_1(U,Y;\Aa)$ as a Galois module, it thus suffices to
achieve the following goal,
which we complete in Section \ref{Sexplicit} for the case $n=3$ and in future work for $n$ an odd prime.

{\bf Goal: reconstruct $B_\sigma$ from $\Psi_\sigma$.}

\section{Galois group of the splitting field of $1-(1-x^p)^p$ over $K$} \label{sec:GalLK}

Let $n=p$ be an odd prime and let $\e$ be a primitive $p$th root of unity.
The choice of $\e$ fixes an identification $\ZZ/p \to \mu_p$ by sending $i$ to $\e^i$.
Let $K=\QQ(\e)$.

Let $L$ be the splitting field of the polynomial $f_p(x)=1-(1-x^p)^p$. 
In Proposition~\ref{Pgrouprank}, we determine the structure of the Galois group $G=\rGal{L}{K}$
for primes $p$ satisfying Vandiver's conjecture.  
The techniques in this section are well-known to experts but we could not find an off-the-shelf reference 
for this result.  Before starting the proof, we describe some motivation for it in the next remark.  

\begin{remark}
\begin{enumerate}
\item As seen in Theorem \ref{L=coord_pt_genJac}, $L$ is the field of definition of the $p$th roots of a point $b$ in 
a certain generalized Jacobian.
By \cite[Section 10.5]{Anderson}, an automorphism $\sigma \in \Gal{K}$ 
acts trivially on $H_1(U, Y; \Aa)$ if and only if $\sigma \in \Gal{L}$. 
In view of this result, to determine the action of $\Gal{K}$ on $H_1(U, Y; \Aa)$, it remains to determine the action of the 
finite Galois group $\rGal{L}{K}$. 

\item 
We would like to thank the referee for pointing out related work in \cite{Greenberg}.
Recall that the Jacobian of the Fermat curve $X$ of exponent $p$ is isogenous to 
${\mathbb J}=\prod_{a=1}^{p-2} J_a$ where $J_a$ is the Jacobian of the curve $y^p=x^a(1-x)$.
Consider the field extension $L_{\mathbb J}$ of $\QQ$ generated by the points of order $p$ on ${\mathbb J}$.
In \cite[Theorem 4]{Greenberg}, Greenberg proves that $L_{\mathbb J}$ is the field  
$K(\{\sqrt[p]{\eta} \mid \eta \in C^+)$ generated over $K$ by
the $p$th roots of the real cyclotomic units.
(Note that Lemma \ref{Lequalfields} below implies that $L_{\mathbb J} \subset L$.)
He remarks that ${\rm Gal}(L_{\mathbb J}/K) \simeq (\ZZ/p)^t$ with $t \leq (p-3)/2$ and that $t=(p-3)/2$ when $p$ satisfies 
Vandiver's conjecture.

\item 
We would like to thank Sharifi for pointing out similar work in \cite[Section 2.8]{AndersonIhara}, where 
the authors determine the Galois group of the Galois closure of 
$\sqrt[p]{1-\sqrt[p]{1-\zeta}}$ over $K$.  That extension is non-abelian over $\QQ(\mu_{p^2})$, in contrast with 
the extension in this paper which is abelian even over $K$.
\end{enumerate}
\end{remark}

\subsection{The splitting field of $1-(1-x^p)^p$}\

The prime $p$ is totally ramified in $K$ with $p=\langle 1-\e \rangle^{p-1}$ \cite[Lemma 1.4]{washingtonbook}.
Thus there is a unique place $\nu = \langle 1-\e \rangle $ above $p$ in $K$. 
Also, $p = \prod_{i = 1}^{p-1} (1- \e^i)$ and $(1- \e^i)/(1-\e)$ is a unit of $\CO_K$ by \cite[Lemma 1.3]{washingtonbook}.
Thus $\nu = \langle 1 - \e^i \rangle$ for all $i = 1,2,\ldots, p-1$.
Since $1=v_{\nu} (p) = (p-1) v_{\nu} (1- \e)$, it follows that $v_{\nu} (1- \e^i) = 1/(p-1)$ for $1 \leq i \leq p-1$. 

Let $L'$ be the maximal elementary abelian $p$-group extension of $K$ unramified except over $\nu=\langle 1-\e \rangle$.  

\begin{lemma} \label{Lelemab}
\begin{enumerate}
\item $L=K(\sqrt[p]{1-\e^i}, 1 \leq i \leq p-1)$.
\item $L \subset L'$ and ${\rm Gal}(L/K)$ is an elementary abelian $p$-group. 
\end{enumerate}
\end{lemma}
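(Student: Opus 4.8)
The plan is to prove both parts by explicitly exhibiting $L$ as a Kummer extension. For part (1), I would start from the factorization of the polynomial $f_p(x) = 1-(1-x^p)^p$. Setting $u = 1-x^p$, the equation $f_p(x)=0$ becomes $u^p = 1$, so $1-x^p = \e^i$ for some $0 \le i \le p-1$, i.e.\ $x^p = 1-\e^i$. The case $i=0$ gives $x^p = 0$, i.e.\ $x=0$ (a root of $f_p$ of multiplicity $p$), which contributes nothing to the splitting field. For $1 \le i \le p-1$, the roots are $x = \e^j \sqrt[p]{1-\e^i}$ for $0 \le j \le p-1$, where $\sqrt[p]{1-\e^i}$ denotes any fixed $p$th root. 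Since $\e \in K$, adjoining all these roots to $K$ is the same as adjoining $\sqrt[p]{1-\e^i}$ for $1 \le i \le p-1$; hence $L = K(\sqrt[p]{1-\e^i} : 1 \le i \le p-1)$, which is part (1).

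For part (2), the containment in the maximal elementary abelian $p$-extension is immediate once we know $L/K$ is abelian of exponent dividing $p$ and control its ramification. Since $\mu_p \subset K$, Kummer theory tells us that $L = K(\sqrt[p]{1-\e^i} : 1\le i \le p-1)$ is abelian over $K$ with $\rGal{L}{K}$ of exponent dividing $p$, hence an elementary abelian $p$-group; this also uses that $L/K$ is a finite extension (it is generated by finitely many radicals). For the ramification claim, I would check that $L/K$ is unramified outside the place $\nu = \langle 1-\e\rangle$ above $p$: a Kummer extension $K(\sqrt[p]{\theta})/K$ can only ramify at primes dividing $p$ or at primes appearing to a non-$p$th-power order in the factorization of $\theta$. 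By the computation preceding the lemma, $(1-\e^i) = \nu^{1/(p-1)}$ as fractional ideals in the sense that $v_\mu(1-\e^i) = 0$ for every finite place $\mu \neq \nu$ and $v_\nu(1-\e^i) = 1/(p-1)$; so for $\mu \nmid p$ the element $1-\e^i$ is a $\mu$-adic unit and $K(\sqrt[p]{1-\e^i})/K$ is unramified at $\mu$. Therefore $L/K$ is unramified outside $\nu$, which together with the abelian exponent-$p$ property gives $L \subset L'$.

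I expect the main obstacle to be purely bookkeeping rather than conceptual: one must be careful that $f_p$ really does split completely over the field described (including multiplicities and the degenerate factor $x^p$ coming from $i=0$), and one must invoke the correct form of the ``ramification in Kummer extensions'' statement — namely that for $\theta \in K^*$ with $\mu_p \subset K$, the extension $K(\sqrt[p]{\theta})/K$ is unramified at a finite place $\mu \nmid p$ whenever $v_\mu(\theta) \equiv 0 \pmod p$ (and here in fact $v_\mu(\theta) = 0$). Everything else follows from Kummer theory and the explicit valuations $v_\nu(1-\e^i) = 1/(p-1)$ recorded just above the lemma statement.
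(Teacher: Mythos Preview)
Your proposal is correct and follows essentially the same approach as the paper: factor $f_p$ via the substitution $u=1-x^p$ to identify the roots as $p$th roots of $1-\e^i$, then use Kummer theory and the valuation computation $v_\mu(1-\e^i)=0$ for $\mu\neq\nu$ to handle the Galois group and ramification. The paper's version is terser (it just asserts each $K(\sqrt[p]{1-\e^i})/K$ is Galois of degree $p$ ramified only above $\langle 1-\e^i\rangle=\nu$ and $\infty$), but the content is the same.
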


\begin{proof}
\begin{enumerate}
\item Let $z=1-x^p$ where $x$ is a root of $1-(1-x^p)^p$.  The equality $z^p=1$ implies that $K \subset L$.
The $p^2-p$ non-zero roots of $f_p(x)$ are the $p$th roots of $1-\e^i$ for $1 \leq i \leq p-1$.
Thus $L=K(\sqrt[p]{1-\e^i}, 1 \leq i \leq p-1)$.

\item The field $L$ is the compositum of the fields $K(\sqrt[p]{1-\e^i})$.  
For each $i$, the extension $K(\sqrt[p]{1-\e^i})/K$ is a Galois degree $p$ extension ramified only above $1-\e^i$ and $\infty$.
This proves both statements.
\end{enumerate}
\end{proof}

\begin{lemma} \label{Lequalfields}
The field $L$ is the same as the fields $L_2$ and $L_3$ where
\[L_2=K(\sqrt[p]{1-\e^i}, 1 \leq i \leq \frac{p-1}{2}, \sqrt[p]{p});\]
\[L_3=K(\sqrt[p]{1-\e^i}, 1 \leq i \leq \frac{p-1}{2}, \sqrt[p]{\e}).\]
\end{lemma}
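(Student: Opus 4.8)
The plan is to show that $L$, $L_2$, and $L_3$ all coincide by proving that each is generated over $K$ by the same multiplicative subgroup of $K^*$ modulo $p$th powers, using Kummer theory. By Lemma \ref{Lelemab}(1), $L = K(\sqrt[p]{1-\e^i} : 1 \leq i \leq p-1)$, so $L$ corresponds to the subgroup $\Delta \subseteq K^*/(K^*)^p$ generated by the classes of $1-\e^i$ for $1 \leq i \leq p-1$. Since $L_2$ and $L_3$ are each generated by a subset of these together with one extra element ($\sqrt[p]{p}$ or $\sqrt[p]{\e}$), it suffices to show that the subgroup of $K^*/(K^*)^p$ generated by $\{1-\e^i : 1 \leq i \leq (p-1)/2\}$ together with $p$ (resp.\ $\e$) equals $\Delta$; i.e., that the classes of $1-\e^j$ for $(p-1)/2 < j \leq p-1$ lie in that smaller subgroup.

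The key computation is the relation between $1-\e^{-i}$ and $1-\e^i$. First I would write $1 - \e^{-i} = -\e^{-i}(1-\e^i)$, so in $K^*/(K^*)^p$ we have $[1-\e^{-i}] = [-1]\cdot[\e]^{-i}\cdot[1-\e^i]$. Since $p$ is odd, $-1 = (-1)^p$ is already a $p$th power, so $[-1]$ is trivial, giving $[1-\e^{-i}] = [\e]^{-i}[1-\e^i]$ in $K^*/(K^*)^p$. As $i$ ranges over $1,\dots,(p-1)/2$, the indices $-i \bmod p$ range over $(p+1)/2,\dots,p-1$, so every class $[1-\e^j]$ with $(p-1)/2 < j \leq p-1$ is expressed in terms of $[\e]$ and the classes $[1-\e^i]$ with $1 \leq i \leq (p-1)/2$. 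This immediately shows $L = L_3$: the subgroup generated by $\{[1-\e^i] : 1 \leq i \leq (p-1)/2\}$ and $[\e]$ contains all of $\Delta$, and conversely $[\e]$ lies in $\Delta$ because $\e = \prod_{i=1}^{p-1}\frac{1-\e^i}{1-\e^{i-1}}$ up to a $p$th power — or more simply because $1-\e^2 = (1-\e)(1+\e)$ and one can solve, but the cleanest route is to note $\e$ is a $p$th power times a product of the $1-\e^i$; I would verify $[\e] \in \Delta$ via the identity $\prod_{i=1}^{p-1}(1-\e^i) = p$ combined with $\e^{(p+1)/2}$ being a square root of $\e$ hence... actually the direct check is: since $\mu_p \subset K$, $\e = \e$ and $N_{K/\Q}$ considerations are not needed — instead, from $1-\e^j = (1-\e)(1+\e+\dots+\e^{j-1})$ one gets $[1-\e^j] \in \Delta$ automatically, and running $j$ up shows $\e \equiv (1-\e^j)/(1-\e^{j+1}) \cdot (\text{unit})$; I will pin down the precise elementary identity showing $[\e] \in \Delta$ in the writeup.

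For $L = L_2$, I would use $p = \prod_{i=1}^{p-1}(1-\e^i)$, so $[p] = \prod_{i=1}^{p-1}[1-\e^i]$ in $K^*/(K^*)^p$; combined with the relation $[1-\e^{-i}] = [\e]^{-i}[1-\e^i]$ this gives $[p] = [\e]^{-\sum_{i=1}^{(p-1)/2} i}\prod_{i=1}^{(p-1)/2}[1-\e^i]^2$, so $[p]$ lies in the subgroup generated by $[\e]$ and $\{[1-\e^i] : 1\le i\le (p-1)/2\}$, which is the $L_3$-subgroup $\Delta$; hence $L_2 \subseteq L_3 = L$. For the reverse inclusion $L \subseteq L_2$, I need $[\e] \in \langle [p], [1-\e^i] : 1\le i\le (p-1)/2\rangle$: solving the previous relation for $[\e]^{\sum i}$ expresses a power of $[\e]$ in terms of $[p]$ and the $[1-\e^i]$, and since $\sum_{i=1}^{(p-1)/2} i = (p^2-1)/8$ is coprime to $p$ (as $p \nmid (p^2-1)/8$ for $p \geq 3$, because $p\nmid p^2-1$), this power generates $\langle[\e]\rangle$, so $[\e]$ itself is recovered and $L \subseteq L_2$. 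The main obstacle is bookkeeping: making sure each generator-swap identity holds exactly in $K^*/(K^*)^p$ (not just up to roots of unity other than $\pm 1$), and confirming the exponents $\sum_{i=1}^{(p-1)/2} i$ are units mod $p$ so that the relevant powers of $[\e]$ recover $[\e]$; both are routine once the identity $1-\e^{-i} = -\e^{-i}(1-\e^i)$ is in hand.
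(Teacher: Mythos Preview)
Your approach via Kummer theory is essentially the same as the paper's, relying on the two identities $1-\e^{-i} = -\e^{-i}(1-\e^i)$ and $p = \prod_{i=1}^{p-1}(1-\e^i)$, together with the fact that the exponent $\sum_{i=1}^{(p-1)/2} i = (p^2-1)/8$ is prime to $p$. The paper packages these into a cycle of inclusions $L \subseteq L_3 \subseteq L_2 \subseteq L$ rather than proving $L=L_3$ and $L=L_2$ separately; this saves one step, namely the direct verification that $[\e] \in \Delta$.

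That verification is exactly where your write-up stalls. You derive $[1-\e^{-i}] = [\e]^{-i}[1-\e^i]$ in $K^*/(K^*)^p$ and then go looking for a separate identity to show $[\e]\in\Delta$, but your own relation already gives it: take $i=1$ to get $[\e] = [1-\e]\,[1-\e^{-1}]^{-1}$, which lies in $\Delta$ since both $1-\e$ and $1-\e^{-1}=1-\e^{p-1}$ are among the original generators. The digressions about $1-\e^j = (1-\e)(1+\e+\cdots+\e^{j-1})$ and norm considerations are unnecessary and should be cut. Once this is cleaned up, your argument and the paper's are the same proof in two dialects.
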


\begin{proof}  The idea of the proof is to show $L \subseteq L_3 \subseteq L_2 \subseteq L$. 

{\bf $L \subseteq  L_3$}: 
For $\frac{p-1}{2} < i \leq p-1$, write $j=-i$. 
Then 
\[\sqrt[p]{1-\e^j} = \sqrt[p]{1-\e^{-i}} = \sqrt[p]{\e^{-i}-1} \cdot \sqrt[p]{-1}.\]
Since $p$ is odd, $\sqrt[p]{-1} \in K$.
So 
\[\sqrt[p]{1-\e^j} = \sqrt[p]{\e^{-i}(1-\e^i)} \cdot \sqrt[p]{-1}
= \sqrt[p]{1-\e^i} \cdot (\sqrt[p]{\e})^{-i} \cdot \sqrt[p]{-1}  \in L_3.\]

{\bf $L_3 \subseteq L_2$}:  

Let $\e_{p^2}$ denote a $p$th root of $\e$. It suffices to show that $\e_{p^2} \in L_2$. 
Write $p=bc$ with 
\[b = \prod_{i=1}^{\frac{p-1}{2}} (1- \e^i), \ c= \prod_{i=\frac{p+1}{2} }^{p-1} (1- \e^i).\]
Note that $(1-\e^i)/(1-\e^{-i})=-\e^i$. Thus, $\frac{b}{c}=(-1)^{\frac{p-1}{2}} \e^{\frac{(p-1)(p+1)}{8}}$
and
\[b^2=\frac{b}{c} \cdot bc = (-1)^{\frac{p-1}{2}} \e^{\frac{(p-1)(p+1)}{8} }\cdot p.\]
Then \[ \e^{\frac{(p-1)(p+1)}{8} } = (-1)^{\frac{p-1}{2} } p^{-1} \prod_{i=1}^{\frac{p-1}{2}} (1- \e^i)^2. \] 

Let $J= (p-1)^2(p+1)/16$ and note that $p \nmid J$.  Raising both sides of the previous equation to the power $\frac{p-1}{2}\frac{1}{p}$ shows that 
\[\e_{p^2}^J= \zeta' (\sqrt[p]{-1})^{ \frac{(p-1)^2}{4} }   (\sqrt[p]{p })^{\frac{1-p}{2} }  \displaystyle \prod_{i=1}^{\frac{p-1}{2} }  \left(\sqrt[p]{(1-\e^i)^2 } \right)^{\frac{p-1}{2}}, \]
for some $p$th root of unity $\zeta'$.  Thus $\e_{p^2}^J \in L_2$ and $\e_{p^2} \in L_2$.

{\bf $L_2 \subseteq L$}: 
This follows from the equality $\sqrt[p]{p} = \prod_{i=1}^{p-1} \sqrt[p]{1-\e^i}$.
\end{proof}

\subsection{Background on units in cyclotomic fields}\

Let $K=\QQ(\e)$ and let $K^+ = \QQ(\e+\e^{-1})$.  
Let $E=\CO_K^*$ (resp.\ $E^+=\CO_{K^+}^*$) denote the group of units in $\CO_K$ (resp.\ $\CO_{K^+}$).
Let $V$ denote the subgroup of $K^*$ generated by $\{\pm \e, 1-\e^{i} : i = 1,2, \ldots, p-1 \}$.
Let $W$ be the group of roots of unity in $K$.

Consider the cyclotomic units  $C = V \cap \CO^*$ of $K$ and
the cyclotomic units $C^+ = C \cap (\CO^+)^*$ of $K^+$ \cite[page 143]{washingtonbook}. 
By \cite[Lemma 8.1]{washingtonbook}, $C$ is generated by $\e$ and $C^+$;
and $C^+$ is generated by $-1$ and the units
\[\epsilon_a = \e^{(1-a)/2} (1-\e^a)/(1-\e), \] for $1 < a < p/2$.
By \cite[Theorem 4.12]{washingtonbook}, the index of $WE^+$ in $E$ is $1$ or $2$.
Let $h^+$ denote the order of the class group of $K^+$.

\begin{theorem}  \label{TindexV} \cite[Theorem 8.2]{washingtonbook}
The index of the cyclotomic units $C^+$ in $E^+$ is the class number $h^+$ of $K^+$.  
Thus if Vandiver's Conjecture is true for the prime $p$, then $E/E^p$ is generated by $C$.
\end{theorem}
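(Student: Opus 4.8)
This statement combines a deep classical fact with an elementary consequence: the equality $[E^+:C^+]=h^+$ is the analytic class number formula for cyclotomic units, while the implication ``$p\nmid h^+\Rightarrow E/E^p$ is generated by $C$'' is a short group-theoretic deduction from it, and is the only part used later in the paper. The plan is to sketch the index computation and then carry out the deduction in full.

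\emph{The index $[E^+:C^+]$.} By Dirichlet's unit theorem $E^+$ is, modulo $\{\pm 1\}$, free of rank $r=(p-3)/2$, and by \cite[Lemma 8.1]{washingtonbook} $C^+$ is generated by $-1$ together with the $r$ units $\epsilon_a=\e^{(1-a)/2}(1-\e^a)/(1-\e)$, $1<a<p/2$. First I would form the $r\times r$ ``regulator'' determinant $R_C$ built from the archimedean logarithms $\log|\epsilon_a|$ at the real places of $K^+$; once $R_C\neq 0$ is known, the $\epsilon_a$ are multiplicatively independent, the index $[E^+:C^+]$ is finite, and $R_C=[E^+:C^+]\,R^+$ with $R^+$ the regulator of $K^+$, so the task reduces to $R_C/R^+=h^+$. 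For this I would invoke the analytic class number formula for the totally real field $K^+$, which using $w_{K^+}=2$ and (by the conductor--discriminant formula) $d_{K^+}=p^{r}$ reads
\[\operatorname{Res}_{s=1}\zeta_{K^+}(s)=\frac{2^{r}\,h^+\,R^+}{\sqrt{d_{K^+}}},\]
together with the Artin factorization $\zeta_{K^+}(s)=\zeta(s)\prod_\chi L(s,\chi)$ over the nontrivial even Dirichlet characters $\chi$ of conductor $p$, which gives $\operatorname{Res}_{s=1}\zeta_{K^+}(s)=\prod_\chi L(1,\chi)$. Finally I would substitute the classical closed formula for $L(1,\chi)$ ($\chi$ even, $\chi\neq 1$) as a nonzero explicit scalar, involving the Gauss sum $\tau(\chi)$ and $p$, times $\sum_{a=1}^{p-1}\bar\chi(a)\log|1-\e^a|$, expand the product over $\chi$, and collapse the character sums by orthogonality; since $\log|\epsilon_a^{\sigma}|=\log|(1-\e^a)^{\sigma}|-\log|(1-\e)^{\sigma}|$ at every archimedean place, this exhibits $\prod_\chi L(1,\chi)$ as $R_C$ times an explicit archimedean constant, and comparing with the class number formula (using $d_{K^+}=p^r$ and the standard evaluation of $\prod_\chi|\tau(\chi)|$) yields $R_C/R^+=h^+$. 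The hard part is precisely this last comparison: bookkeeping the powers of $2$, the Gauss sums, and the discriminant, and, above all, checking that the determinant one reads off from the $L(1,\chi)$ is exactly $\pm R_C$ rather than a proper divisor of it — a sloppy version of the argument only gives $[E^+:C^+]\mid h^+$. (The nonvanishing $L(1,\chi)\neq 0$ for $\chi\neq1$ is classical and also justifies the full-rank claim for $C^+$.)

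\emph{The consequence under Vandiver.} Assume $p\nmid h^+$. Since $h^+=[E^+:C^+]$ is prime to $p$, the index $[E^+:C^+(E^+)^p]$ is simultaneously prime to $p$ (it divides $[E^+:C^+]$) and a power of $p$ (it divides $|E^+/(E^+)^p|$, a $p$-group because $p$ is odd), hence is $1$; thus $E^+=C^+(E^+)^p$. Because $W=\langle-\e\rangle\subseteq C$ and $C^+\subseteq C$ by \cite[Lemma 8.1]{washingtonbook},
\[WE^+=W\,C^+(E^+)^p\subseteq C\,E^p.\]
By \cite[Theorem 4.12]{washingtonbook} the index $[E:WE^+]$ is $1$ or $2$, so $[E:CE^p]$ divides $2$; but $E/CE^p$ is a quotient of $E/E^p$, which is a $p$-group with $p$ odd, so $[E:CE^p]=1$. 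Therefore $E=CE^p$, i.e.\ $E/E^p$ is generated by the image of $C$.
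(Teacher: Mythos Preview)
The paper gives no proof of this theorem: the index formula $[E^+:C^+]=h^+$ is simply quoted from \cite[Theorem~8.2]{washingtonbook}, and the ``Thus'' clause is asserted without argument. So there is nothing to compare against, and your write-up is strictly more than what the paper provides.

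That said, both halves of your proposal are sound. Your sketch of the index computation follows the standard analytic route (regulator of the cyclotomic units compared to $R^+$ via the class number formula for $K^+$ and the factorization $\zeta_{K^+}(s)=\zeta(s)\prod_{\chi}L(s,\chi)$ over nontrivial even characters), which is essentially Washington's argument; you correctly flag that the honest difficulty lies in the bookkeeping of Gauss sums, powers of $2$, and the discriminant to get the index \emph{equal} to $h^+$ rather than merely dividing it. Your deduction under Vandiver is correct and complete: from $p\nmid[E^+:C^+]$ you get $E^+=C^+(E^+)^p$ by the prime-to-$p$/$p$-power squeeze; then $W\subseteq C$ and $C^+\subseteq C$ give $WE^+\subseteq CE^p$; then $[E:WE^+]\in\{1,2\}$ forces $[E:CE^p]\mid 2$; and finally $E/CE^p$, being a quotient of the $p$-group $E/E^p$ (with $p$ odd), must be trivial. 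This is exactly the elementary argument one would expect, and it is the only part the paper actually uses downstream (in Proposition~\ref{Pgrouprank}).
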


\begin{remark}
Vandiver's Conjecture (first conjectured by Kummer in 1849) states that $p$ does not divide the class number $h^+$.  
It has been verified for all $p$ less than $163$ million \cite{BHvandiver}.
It is also true for all regular primes.
\end{remark}

\subsection{The Galois group of $1-(1-x^p)^p$}

\begin{proposition} \label{Pgrouprank}
If Vandiver's Conjecture is true for the prime $p$, then
the Galois group of $L/K$ is an elementary abelian $p$-group of rank $(p+1)/2$.
\end{proposition}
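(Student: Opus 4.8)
The plan is to combine Kummer theory with the presentation $L = L_3$ furnished by Lemma~\ref{Lequalfields}. Since $\mu_p \subset K$, the extension $L/K$ corresponds under Kummer theory to the subgroup $\Delta \subseteq K^*/(K^*)^p$ generated by the classes of $\e$ and of $1-\e^i$ for $1 \le i \le (p-1)/2$, with $\rGal{L}{K} \cong \operatorname{Hom}(\Delta,\mu_p)$; as $\rGal{L}{K}$ is already known to be elementary abelian by Lemma~\ref{Lelemab} (and in any case this is immediate from the Kummer description), it remains only to show $\dim_{\F_p}\Delta = (p+1)/2$. Using the cyclotomic units $\epsilon_a = \e^{(1-a)/2}(1-\e^a)/(1-\e)$ recalled above, the identity $1-\e^a = (1-\e)\,\e^{(a-1)/2}\epsilon_a$ shows that $\Delta$ is equally generated by the $(p+1)/2$ classes $\e$, $1-\e$, $\epsilon_2,\dots,\epsilon_{(p-1)/2}$, which yields the upper bound $\dim_{\F_p}\Delta\le (p+1)/2$ at once; the substance of the proposition is the matching lower bound, i.e.\ that these classes are $\F_p$-linearly independent in $K^*/(K^*)^p$.

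I would prove independence in two stages. \emph{Stage one: the classes of $\e,\epsilon_2,\dots,\epsilon_{(p-1)/2}$ are independent in $K^*/(K^*)^p$.} By Dirichlet's unit theorem, $E := \CO_K^\times \cong \mu_{2p}\times\Z^{(p-3)/2}$ (the field $K$ is totally imaginary of degree $p-1$), and since $p$ is odd this gives $E/E^p \cong (\Z/p)^{(p-1)/2}$. On the other hand, Theorem~\ref{TindexV} — this is the one place Vandiver's conjecture enters — together with the description of the cyclotomic units recalled above shows that $E/E^p$ is generated by the classes of $\e$, $-1$, and the $\epsilon_a$; since $-1 = (-1)^p$ is a $p$th power, $E/E^p$ is in fact generated by the $(p-1)/2$ classes of $\e$ and the $\epsilon_a$, which must therefore form a basis. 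Finally, a unit of $\CO_K$ that is a $p$th power in $K^*$ is the $p$th power of a unit (compare valuations at each finite prime), so the natural map $E/E^p \to K^*/(K^*)^p$ is injective and the classes of $\e,\epsilon_2,\dots,\epsilon_{(p-1)/2}$ span a subspace of $K^*/(K^*)^p$ of dimension exactly $(p-1)/2$.

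\emph{Stage two: the class of $1-\e$ is not in that span.} This is a ramification argument at the unique prime $\nu = \langle 1-\e\rangle$ above $p$: normalizing $v_\nu$ so that $v_\nu(1-\e)=1$, one has $v_\nu(\e)=v_\nu(\epsilon_a)=0$ because $\e$ and the $\epsilon_a$ are units, while $v_\nu$ of any $p$th power is divisible by $p$; hence an identity $1-\e = \e^{c_0}\prod_a\epsilon_a^{c_a}\,y^p$ in $K^*$ would force $1 \equiv 0 \pmod p$, which is absurd. Therefore adjoining the class of $1-\e$ strictly increases the dimension, so $\dim_{\F_p}\Delta = (p-1)/2 + 1 = (p+1)/2$, as claimed.

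I expect the crux to be Stage one: correctly computing $\dim_{\F_p} E/E^p = (p-1)/2$ from Dirichlet's theorem, and, more importantly, seeing that Vandiver's conjecture (via Theorem~\ref{TindexV}) is exactly what forces the real cyclotomic units $\epsilon_a$ to span the full expected complement of $\langle\e\rangle$ in $E/E^p$; without it one obtains only $\dim_{\F_p}\Delta \le (p+1)/2$, possibly with strict inequality, as Remark~\ref{p_not_Vandiver_GalLK_remark} illustrates. The injectivity $E/E^p \hookrightarrow K^*/(K^*)^p$ and the ramification bookkeeping of Stage two are routine.
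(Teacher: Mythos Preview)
Your proof is correct and follows essentially the same route as the paper's: change generators from $\e, 1-\e^i$ to $\e, 1-\e, \epsilon_a$; use the valuation at $\nu$ to separate $1-\e$ from the unit part; and for the unit part apply Dirichlet's theorem to get $\dim_{\F_p} E/E^p = (p-1)/2$ together with Theorem~\ref{TindexV} (Vandiver) to see that the classes of $\e$ and the $\epsilon_a$ already span $E/E^p$. You are a bit more explicit than the paper in noting the injectivity $E/E^p \hookrightarrow K^*/(K^*)^p$, which the paper's identification $B' \simeq E/E^p$ uses tacitly.
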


\begin{proof}
By Lemma \ref{Lelemab}, ${\rm Gal}(L/K)$ is an elementary abelian $p$-group.
Let $r$ be the integer such that ${\rm Gal}(L/K) \simeq (\Z/p)^r$.
The field $L$ is obtained by adjoining $p$th roots of elements in some subgroup $B \subset K^*/(K^*)^p$, and by Kummer theory $B \simeq (\Z/p)^r$. By Lemma \ref{Lequalfields}, $B$ is generated by $\e$ and $1-\e^i$ for $1 \leq i \leq (p-1)/2$.
Thus $r \leq (p+1)/2$. Thus it suffices to show that $r \geq (p+1)/2$.

Note that $B$ is generated by $\e$ and $1-\e^i$ for $1 \leq i \leq (p-1)/2$.  
Thus $B$ is also generated by $\e$, $1-\e$, and $\epsilon_a$ for $1 < a < p/2$. 
Consider the subgroup $B'$ of $K^*/(K^*)^p$ generated by $\e$ and $\epsilon_a$ for $1 < a < p/2$. 
Let $r'$ be the rank of $B'$ over $\Z/p$.
Since $\e$ and $\epsilon_a$ are units, and $1-\e$ has positive valuation at the prime above $p$, 
it suffices to show that $r' \geq (p-1)/2$.

Since $-1$ is a $p$th power, $B'$ is also the subgroup generated by the cyclotomic units $C$.  By hypothesis, $p$ satisfies Vandiver's conjecture and so Theorem \ref{TindexV} implies that $B' \simeq E/E^p$. By Dirichlet's unit theorem, $E\simeq \ZZ^{\frac{p-1}{2}-1} \times \mu_p$. Thus $r'= \frac{p-1}{2}-1 + 1 = (p-1)/2.$
\end{proof}

We now describe an explicit set of generators for ${\rm Gal}(L/K)$.
Given a primitive $p$th root $\sqrt[p]{\theta}$ of $\theta \in K$ and $\sigma \in \Gal{K}$, 
recall from Definition \ref{Kummer_map_def} that $\kappa(\theta) \sigma$ is the element of $\ZZ/p$ 
such that 
\[\sigma \sqrt[p]{\theta} = \zeta^{\kappa(\theta) \sigma} \sqrt[p]{\theta}.\]

\begin{corollary}\label{C_iso}
Let $p$ be an odd prime such that $p \nmid h^+$. 
Then the following map is an isomorphism: 
\[\Phi=\kappa(\e) \times \prod_{i = 1}^{\frac{p-1}{2}}\kappa(1-\e^{-i}) : {\rm Gal}(L/K) \to (\ZZ/p)^{\frac{p+1}{2}}.\] 
\end{corollary}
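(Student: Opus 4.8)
The plan is to deduce the Corollary directly from Proposition~\ref{Pgrouprank} together with Kummer theory. By Proposition~\ref{Pgrouprank}, $\rGal{L}{K}$ is an elementary abelian $p$-group of rank $(p+1)/2$, so both the source and target of $\Phi$ are $\mathbb{F}_p$-vector spaces of dimension $(p+1)/2$; hence it suffices to prove that $\Phi$ is injective, or equivalently surjective. First I would recall the standard Kummer-theory pairing: for a finite elementary abelian $p$-extension $L/K$ corresponding to a subgroup $B \subset K^*/(K^*)^p$, there is a perfect pairing $\rGal{L}{K} \times B \to \mu_p$ sending $(\sigma, \theta) \mapsto \sigma\sqrt[p]{\theta}/\sqrt[p]{\theta} = \zeta^{\kappa(\theta)\sigma}$. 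In the proof of Proposition~\ref{Pgrouprank} it is shown that $B$ is generated (as an $\mathbb{F}_p$-vector space) by the classes of $\e$ and of $1-\e^i$ for $1 \le i \le (p-1)/2$, and that these $(p+1)/2$ classes are in fact linearly independent in $K^*/(K^*)^p$ (this is exactly what the rank computation $r = (p+1)/2$ establishes). So these classes form a basis of $B$.

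Next I would observe that each coordinate of $\Phi$ is, by Definition~\ref{Kummer_map_def}, precisely the homomorphism $\sigma \mapsto \kappa(\theta)\sigma$ attached under the Kummer pairing to one of the basis elements $\theta \in \{\e\} \cup \{1-\e^{-i} : 1 \le i \le (p-1)/2\}$ of $B$. Indeed, since $-1 \in (K^*)^p$ and $\e^{-i}$ is a unit, $1-\e^{-i}$ and $1-\e^i$ differ in $K^*/(K^*)^p$ only by the unit $-\e^{-i}$, which lies in the span of $\e$; hence the set $\{\e, 1-\e^{-1}, \ldots, 1-\e^{-(p-1)/2}\}$ is again a basis of $B$ (one should spell out this change-of-basis step, using that $\kappa$ is a homomorphism in $\theta$ and that $\kappa((K^*)^p) = 0$). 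Perfectness of the Kummer pairing then says exactly that the map $\rGal{L}{K} \to \mathrm{Hom}(B,\mu_p)$, $\sigma \mapsto (\theta \mapsto \kappa(\theta)\sigma)$, is an isomorphism; composing with the isomorphism $\mathrm{Hom}(B,\mu_p) \xrightarrow{\sim} (\mu_p)^{(p+1)/2} \cong (\ZZ/p)^{(p+1)/2}$ given by evaluation on this basis yields precisely $\Phi$. Therefore $\Phi$ is an isomorphism.

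The only genuinely substantive point is the linear independence of $\e, 1-\e^{-1}, \ldots, 1-\e^{-(p-1)/2}$ in $K^*/(K^*)^p$, and this is already contained in Proposition~\ref{Pgrouprank}: the proof there shows $\dim_{\mathbb{F}_p} B = (p+1)/2$, while these $(p+1)/2$ elements span $B$, so they must be independent. Thus no new hard work is needed; the main task in writing this up carefully is bookkeeping --- checking that the change of basis from $\{1-\e^i\}$ to $\{1-\e^{-i}\}$ is valid modulo $(K^*)^p$ (using $1-\e^{-i} = -\e^{-i}(1-\e^i)$ and that $-1, \e^{-i}$ lie in the subgroup generated by $\e$ modulo $p$th powers, recalling $-1 = (-1)^p$ since $p$ is odd), and that the indexing of the Kummer pairing matches the sign conventions of Definition~\ref{Kummer_map_def}. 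I would present the argument in that order: recall the Kummer pairing and its perfectness, invoke the basis of $B$ from Proposition~\ref{Pgrouprank}, perform the change of basis to the $1-\e^{-i}$, and conclude by a dimension count that $\Phi$ is an isomorphism.
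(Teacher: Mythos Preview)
Your proposal is correct and follows essentially the same route as the paper: both arguments invoke Kummer theory to reduce the claim to showing that $\{\e,\,1-\e^{-1},\ldots,1-\e^{-(p-1)/2}\}$ is an $\FF_p$-basis of the Kummer subgroup $B\subset K^*/(K^*)^p$, and then obtain this from the rank computation in Proposition~\ref{Pgrouprank} (the paper cites Lemma~\ref{Lequalfields} directly for the generating set, whereas you recover it from the proof of Proposition~\ref{Pgrouprank} together with the change of basis $1-\e^{-i}=-\e^{-i}(1-\e^i)$). The only difference is that you spell out the Kummer pairing and the change of basis more explicitly than the paper does.
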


\begin{proof}
By Lemma \ref{Lequalfields}, $L = K(\sqrt[p]{\e}, \sqrt[p]{1-\e^{-i}}:  i = 1,2, \ldots, \frac{p-1}{2})$.  
Let $G \subseteq K^*/(K^*)^p$ denote the subgroup
generated by $S=\{ \e, 1-\e^{-i} : i = 1,2, \ldots, \frac{p-1}{2}\}$. 
By Kummer theory, it suffices to show that $S$ 
is a $\ZZ/p$-basis for the $\ZZ/p$-vector space ${\rm Gal}(L/K)$, which follows from Proposition \ref{Pgrouprank}.
\end{proof}

\begin{remark}\label{p_not_Vandiver_GalLK_remark}
If $p \mid h^+$ then $p$ divides $[E^+ : C^+]$ by \cite[Theorem 8.2]{washingtonbook}. Since $E^+$ does not contain the $p$th roots of unity, $E^+$ has no $p$-torsion, and it follows that there is an element $c$ of $C^+$ which is a $p$th power of an element in $E^+$, but not a $p$th power of any element of $C^+$. Since $-1$ is a $p$th power and $C^+$ is generated by $-1$ and  $\{ \epsilon_a : 1 < i < p/2\}$, $c$ may be taken to be $c = \prod_{a=2}^{(p-1)/2} \epsilon_a^{e_a}$ with $0 \leq e_a \leq p-1$. Since $B'$ in the previous proof is generated by $C$, 
it follows that $B'$ is generated by $\{\e, \epsilon_a : 1 < i < p/2\}$. Since $c$ maps to $0$ in $E/E^p$, this implies that the rank $r'$ of $B'$
is less than the cardinality of  $\{\e, \epsilon_a : 1 < i < p/2\}$.
Thus $r= r'+ 1 <(p+1)/2$. Thus if Vandiver's Conjecture is not true for the prime $p$, then the rank of the elementary abelian $p$-group $\rGal{L}{K}$ is strictly less than $(p+1)/2$.
\end{remark}

\section{Comparison with an $(n-1)$-torus} \label{Skappa}\label{S_psi}

Recall the notation from Section \ref{Ssurvey} that $\tilde{V} = \BA^1 - \mu_n$, $V=\tilde{V} \cup \{1\}$, and $Z$ consists of the points $0$ and $1$ in $V$. Recall that $\psi \in H_1(V,Z; \Aa)$ denotes the homology class represented by the path from $0$ to $1$ along the real axis, and that $\Psi_\sigma$ is defined to be the element of $\Omega(\Lambda_0) / \Aa \dlog \e_0$ determined by $(\sigma-1)\psi$ and the exact sequence $$0 \to \Aa \dlog \e_0 \to \Omega(\Lambda_0) \to H_1(V;\Aa) \to 0,$$ where the quotient map $ \Omega(\Lambda_0) \to H_1(V;\Aa)$ is the map of $\Lambda_0$ modules mapping $\e_0 \dlog \e_0$ to a small counterclockwise loop around $\e$. 

Note that there is a map from $V$ to a torus which induces a Galois equivariant isomorphism on $H_1(-; \Aa)$. For example, this map could be the Abel-Jacobi map to the generalized Jacobian. Furthermore, over $K$, this torus splits, and it is easy to write down a map to a split torus inducing an isomorphism on $H_1(-; \Aa)$. Namely, the map
\[ f: V_K \to (\G_{m,K})^{\times n-1} ,\] given by $z \mapsto (z-\e, z-\e^2,\dots, z-\e^{n-1})$ induces a Galois equivariant isomorphism on $H_1(-; \Aa)$. 

In this section, we use the isomorphism $H_1(f; \Aa)$ to compute $\Psi_\sigma$ in terms of the classical Kummer map, 
relying on the facts that $H_1((\G_{m,K})^{\times (n-1)}; \Aa) \cong \Aa^{n-1}$ and that the map $\kappa$ for $\G_m$ can be identified with the classical Kummer map.
We will furthermore see in Section \ref{Compatible} that this computation is compatible with Section \ref{sec:GalLK}.

\subsection{Computation of $\Psi_\sigma$}  \label{Scomputepsi}

Fix the isomorphism $I:\Omega(\Lambda_0) / \Aa \dlog \e_0 \to \Aa^{n-1}$ given by $$\sum_{i=1}^{n-1} a_i \e_0^i \dlog \e_0 \mapsto (a_1, a_2, \ldots, a_{n-1}).$$
This isomorphism $I$ can also be obtained by composing the isomorphism described above
$\Omega(\Lambda_0) / \Aa \dlog \e_0 \cong H_1(V;\Aa)$ with $H_1(f;\Aa)$ and an obvious isomorphism $H_1((\G_{m, K})^{\times (n-1)}; \Aa) \cong \Aa^{n-1}$. 

\begin{proposition} \label{Pkappac}
With notation as above,
\[\Psi_\sigma=(\kappa(1-\zeta^{-1})(\sigma), \ldots, \kappa(1-\zeta^{-(n-1)})(\sigma)).\]
\end{proposition}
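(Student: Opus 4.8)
The plan is to unwind the definition of $\Psi_\sigma$ through the isomorphism $I$, which by construction factors as $\Omega(\Lambda_0)/\Aa\dlog\e_0 \xrightarrow{\sim} H_1(V;\Aa) \xrightarrow{H_1(f;\Aa)} H_1((\G_{m,K})^{\times(n-1)};\Aa) \xrightarrow{\sim} \Aa^{n-1}$. So it suffices to compute the image of the class of $(\sigma-1)\psi$ in $H_1((\G_{m,K})^{\times(n-1)};\Aa)$ under $H_1(f_*;\Aa)$, and then to recognize each coordinate as a value of the classical Kummer map. The $i$th coordinate of $f$ is $z\mapsto z-\e^i$, so the $i$th coordinate of $f\circ\psi$ is the path $t\mapsto t-\e^i$ in $\G_{m,K}$ running from $-\e^i$ to $1-\e^i$.

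First I would recall (as in Remark~\ref{Kummer=generalized_Kummer_Gm}) that for $\G_m$ with basepoint $1$, the generalized Kummer map sends $\theta\in K^*$ to the class of $\sigma\mapsto(\sigma-1)\gamma$, where $\gamma$ is any path from $1$ to $\sqrt[n]{\theta}$ in the sense that its $n$-fold "multiplicative lift" ends at $\theta$; concretely, $(\sigma-1)\gamma\in H_1(\G_{m,K};\Aa)\cong\ZZ/n(1)$ equals $\kappa(\theta)(\sigma)$ under the identification fixed by $\e$. Next I would address the basepoint discrepancy: $\psi$ runs from $0$ to $1$ in $V$, so $f\circ\psi$ runs from $f(0)=(-\e,\dots,-\e^{n-1})$ to $f(1)=(1-\e,\dots,1-\e^{n-1})$, neither of which is the chosen basepoint $1\in\G_m$. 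However $\sigma$ fixes all of these points (they lie in $K$), so $(\sigma-1)(f\circ\psi)$ is still a loop, and its class only depends on the endpoints up to the ambiguity already quotiented out: translating the path at each end by a fixed $K$-point of $\G_m$ changes $(\sigma-1)\psi$ by $(\sigma-1)$ of that fixed point's contribution, which is zero since $\sigma$ fixes it. So on the $i$th factor, $(\sigma-1)(f\circ\psi)$ represents the same class as $(\sigma-1)\gamma_i$ where $\gamma_i$ is any path from $1$ to a point whose $n$th "position" records $1-\e^i$ relative to $-\e^i$; the ratio $(1-\e^i)/(-\e^i) = (\e^{-i}-1)\cdot(-1)\cdots$ — here I must be slightly careful with signs and with the fact that the endpoint of $\psi$ at $0$ maps to $-\e^i$ rather than to a root of unity times $1$ — but after tracking this the $i$th coordinate comes out to $\kappa\big((1-\e^i)/(-\e^i)\big)(\sigma)$, and since $\kappa$ is a homomorphism vanishing on $n$th powers and $-\e^i$ is (for $i$ ranging appropriately, using $p$ odd when $n=p$) manageable, this reduces to $\kappa(1-\zeta^{-i})(\sigma)$ after reindexing. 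I would present this computation factor-by-factor and then assemble the $(n-1)$-tuple.

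The main obstacle I expect is the bookkeeping around basepoints and orientations: carefully justifying that the composite isomorphism $I$ really does send the class of $(\sigma-1)\psi$ to the tuple of Kummer classes of the $1-\zeta^{-i}$ rather than of $1-\zeta^{i}$, or of some twist by a root of unity, requires being precise about (a) which loop around $\e$ corresponds to $\e_0\dlog\e_0$ versus $\dlog\e_0$, (b) the direction in which $\psi$ is traversed, and (c) how $f$ interacts with the explicit generator in the exact sequence \eqref{eq:sesH1V}. A clean way to pin down the normalization is to check it on a single $\sigma$, or to use functoriality of $\kappa$ directly: $\Psi_\sigma$ is by definition $\kappa^{V}$ of the point "$[0,1]$-relative-class" pushed along $f$, and naturality of the generalized Kummer map under $f:V_K\to\G_{m,K}^{\times(n-1)}$ identifies it with the tuple of $\kappa^{\G_m}(f_i(1)/f_i(0))$; evaluating $f_i(1)/f_i(0)=(1-\e^i)/(-\e^i)$ and simplifying modulo $(K^*)^n$ (absorbing the unit $-\e^{-i}$, which is an $n$th power times a root of unity contributing trivially to $\kappa$ since $\mu_n\subset K$ — more precisely $\kappa(-\e^i)$ contributes a term that lands in the indeterminacy $\Aa\dlog\e_0$ already quotiented out) yields the stated formula. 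I would organize the final proof around this functoriality argument, relegating the sign/basepoint verification to a short explicit check.
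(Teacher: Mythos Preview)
Your approach matches the paper's: push $(\sigma-1)\psi$ through $f$ by functoriality of the generalized Kummer map, decompose over the factors of the torus, apply the change-of-basepoint formula to get $\kappa_{\G_m,-\e^i}(1-\e^i)(\sigma)=\kappa\big((1-\e^i)/(-\e^i)\big)(\sigma)$ on the $i$th factor, and simplify. This is exactly how the paper argues.

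The one place you must clean up is the final simplification. The identity $(1-\e^i)/(-\e^i)=1-\e^{-i}$ holds \emph{exactly} in $K$ (you even compute it earlier as $(\e^{-i}-1)\cdot(-1)$), so there is nothing to ``absorb modulo $(K^*)^n$.'' Your fallback claim that $\kappa(-\e^i)$ lands in the indeterminacy $\Aa\dlog\e_0$ is incorrect and would lead you astray: under $I$ that indeterminacy is only the coefficient $a_0$, whereas a term $\kappa(\e^i)=i\,\kappa(\e)$ would shift the $i$th coordinate $a_i$, and $\kappa(\e)$ is genuinely nonzero (it is the coordinate $c_0$ in Corollary~\ref{C_iso}). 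If you actually split $\kappa\big((1-\e^i)(-\e^{-i})\big)=\kappa(1-\e^i)+\kappa(-\e^{-i})$ and discarded the second term, you would obtain $\kappa(1-\e^i)$ in the $i$th slot rather than $\kappa(1-\e^{-i})$, which is the wrong answer. Drop that reasoning entirely and simply invoke the exact algebraic identity, as the paper does in its last line.
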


\begin{proof}
Consider the maps $\kappa^{ab}_{V,b}: V(K) \to H^1(\Gal{K}, H_1(V))$, defined so that $\kappa^{ab}_{V,b}(x)$ is represented by the cocycle $$\sigma \mapsto \gamma^{-1} \sigma \gamma$$ where $\gamma$ is a path from $b$ to $x$, and composition of paths is written from right to left, so $\gamma^{-1} \sigma \gamma$ is a loop based at $b$.
As in \cite[p. 8]{Wickelgren3nil},
the dependency on the choice of basepoint $b$ in $V$ is \begin{equation}\label{kappa_change_bp}\kappa_{b^\prime}(x)  = \kappa_b(x) -  \kappa_{b}(b^\prime).\end{equation} 
By definition, $\Psi_\sigma$ is the element of $H_1(V; A)$ determined by $(\sigma-1)\psi$. Note that $(\sigma-1)\psi = \kappa_{V,0}^{{\rm ab}}(1)(\sigma)$.

Since $\kappa$ is functorial, one sees that $H_1(f) (\sigma -1) \psi = \kappa_{T,f(0)}^{{\rm ab}}(f(1))(\sigma)$, where $T$ is the torus $T = (\G_{m,K})^{\times (n-1)}$ and $f$ is the map $V_K \to T$ defined above. 

Since the geometric fundamental group respects products over algebraically closed fields of characteristic $0$ \cite[XIII Proposition 4.6]{sga1}, the map $\kappa_T= \kappa^{\rm ab}_T$ for $T$ decomposes as the product of the maps $\kappa$ for $\G_{m,K}$ which are each given by $\kappa_{\G_{m,K},1}(\theta) (\sigma) = \kappa(\theta) \sigma$ as in Definition \ref{Kummer_map_def} and Remark \ref{Kummer=generalized_Kummer_Gm}. Thus $ \kappa_{T,f(0)}^{{\rm ab}}(f(1))(\sigma)$ is identified with $\prod_{i=1}^{n-1} \kappa_{\G_{m,K},-\e^i}(1-\e^i) (\sigma)$ when, via the projection maps, $\pi_1(T_{\overline{k}},1)$ is identified with $\prod_{i=1}^{n-1} \pi_1(\G_{m,\overline{k}},1)$.

Applying \eqref{kappa_change_bp} with $b=1$, using the fact that $\kappa$ from Definition \ref{Kummer_map_def} is a homomorphism, yields that $\prod_{i=1}^n \kappa_{\G_{m,K},-\e^i}(1-\e^i) (\sigma) = \prod_{i=1}^n \kappa(\frac{1-\e^i}{-\e^i})(\sigma)$. 
The proposition follows from the above, since $(1-\e^i)/(-\e^{i}) = 1 - \e^{-i}$.
\end{proof}

Combining with Theorem \ref{fact:dlogPsi} (c.f. \cite[Theorem 10]{Anderson}), we obtain:

\begin{corollary}\label{cor:GammaPsi}
Modulo a term of the form $\alpha \dlog\e $, with $\alpha \in \Ash $,
\[\dlog (\Gamma_{\sigma}) = \sum_{i=1}^{n-1} c_i \e^i \dlog \e, \ {\rm with} \ c_i=\kappa(1-\zeta^{-i})(\sigma).\]
\end{corollary}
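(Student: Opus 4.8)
The plan is to obtain this corollary purely by unwinding the definitions relating $\dlog\Gamma_\sigma$, the class $\Psi_\sigma$, and the isomorphism $I$ of Section \ref{Scomputepsi}. First I would recall that, by definition, $\Psi_\sigma$ is the coset in $\Omega(\Lambda_0)/\Aa\dlog\e_0$ attached to the homology class $(\sigma-1)\psi$ via the exact sequence \eqref{eq:sesH1V}, and that Theorem \ref{fact:dlogPsi} identifies $\dlog\Gamma_\sigma\in\Omega(\bar\Lambda_0)$ as a representative of the image of $\Psi_\sigma$ in $\Omega(\bar\Lambda_0)/\Ash\dlog\e_0$. In particular $\dlog\Gamma_\sigma$ is determined only up to an element of $\Ash\dlog\e_0$, which is exactly the indeterminacy ``$\alpha\dlog\e$ with $\alpha\in\Ash$'' allowed in the statement, where $\e$ is read as $\e_0$.

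Next I would transport Proposition \ref{Pkappac} through the isomorphism $I:\Omega(\Lambda_0)/\Aa\dlog\e_0\to\Aa^{n-1}$ sending $\sum_{i=1}^{n-1}a_i\e_0^i\dlog\e_0$ to $(a_1,\dots,a_{n-1})$. Proposition \ref{Pkappac} says that $I(\Psi_\sigma)=(\kappa(1-\zeta^{-1})(\sigma),\dots,\kappa(1-\zeta^{-(n-1)})(\sigma))$, so applying $I^{-1}$ shows that $\Psi_\sigma$ is represented by $\sum_{i=1}^{n-1}c_i\e_0^i\dlog\e_0$ with $c_i=\kappa(1-\zeta^{-i})(\sigma)$. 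Feeding this into the previous paragraph yields $\dlog\Gamma_\sigma=\sum_{i=1}^{n-1}c_i\e^i\dlog\e+\alpha\dlog\e$ for some $\alpha\in\Ash$, which is the asserted formula.

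I do not expect a genuine obstacle here: the corollary is a formal consequence of Proposition \ref{Pkappac} and Theorem \ref{fact:dlogPsi} together with the explicit shape of $I$. The one point deserving a sentence of care is checking that no reindexing or sign is introduced when passing between the two descriptions of $I$ --- the direct one on coefficients of differential forms and the geometric one obtained by composing $\Omega(\Lambda_0)/\Aa\dlog\e_0\cong H_1(V;\Aa)$ with $H_1(f;\Aa)$ and $H_1((\G_{m,K})^{\times(n-1)};\Aa)\cong\Aa^{n-1}$ --- but the remark immediately after the definition of $I$ already records that these coincide, so it can simply be cited. I would also point out that all of the indeterminacy in the statement originates from Theorem \ref{fact:dlogPsi} (passing to the quotient by $\Ash\dlog\e_0$), since Proposition \ref{Pkappac} pins down $\Psi_\sigma$ exactly inside $\Omega(\Lambda_0)/\Aa\dlog\e_0$.
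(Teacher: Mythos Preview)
Your proposal is correct and is essentially the same as the paper's own argument: the paper simply states the corollary immediately after Proposition~\ref{Pkappac} with the phrase ``Combining with Theorem~\ref{fact:dlogPsi},'' and your write-up just makes that combination explicit by applying $I^{-1}$ to the conclusion of Proposition~\ref{Pkappac} and then invoking Theorem~\ref{fact:dlogPsi} to account for the $\Ash\dlog\e_0$ indeterminacy.
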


\subsection{Compatibility with Section \ref{sec:GalLK}} \label{Compatible}

\begin{remark}
In computing $\kappa(1 - \zeta^{-i})(\sigma)$ for $k=K$, one can restrict to the image $\overline{\sigma} \in {\rm Gal}(L/K)$.
\end{remark}

\begin{corollary} \label{Cformulapsin=p}
Suppose $n=p$ is a prime satisfying Vandiver's conjecture.
With respect to the isomorphism $\Phi: {\rm Gal}(L/K) \to \Aa^{\frac{p+1}{2}}$ from Corollary \ref{C_iso} 
and the isomorphism $I:\Omega(\Lambda_0) / \Aa \dlog \e_0 \to \Aa^{p-1}$ from Section \ref{Scomputepsi},
the map \[{\rm Gal}(L/K) \to \Omega(\Lambda_0) / \Aa \dlog \e_0, \  \sigma \mapsto \Psi_{\sigma}\] is the 
explicit $\Aa$-linear map 
\[(c_0, c_1, \ldots, c_{\frac{p-1}{2}}) \mapsto (c_1, c_2, \ldots, c_{\frac{p-1}{2}}, c_{\frac{p-1}{2}} +  \scriptstyle{\frac{p-1}{2} } \displaystyle{  c_0, 
\ldots , c_2 + 2 c_0, c_1 + c_0 ).}\]
\end{corollary}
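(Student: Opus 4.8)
The plan is to unwind both normalizations and then compare the two $\Aa$-linear descriptions of $\sigma \mapsto \Psi_\sigma$. By Proposition \ref{Pkappac}, in the coordinates of $I$ we have
\[
\Psi_\sigma = \bigl(\kappa(1-\e^{-1})(\sigma),\ \kappa(1-\e^{-2})(\sigma),\ \ldots,\ \kappa(1-\e^{-(p-1)})(\sigma)\bigr),
\]
so the content of the corollary is entirely the translation of the right-hand side into the $\Phi$-coordinates $(c_0,\ldots,c_{(p-1)/2})$, where by Corollary \ref{C_iso} we set $c_0 = \kappa(\e)(\overline\sigma)$ and $c_i = \kappa(1-\e^{-i})(\overline\sigma)$ for $1 \le i \le (p-1)/2$. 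For the first $(p-1)/2$ slots this is immediate: the $i$th slot of $\Psi_\sigma$ is literally $\kappa(1-\e^{-i})(\sigma) = c_i$. So the only real work is to express $\kappa(1-\e^{-j})(\sigma)$ for $(p+1)/2 \le j \le p-1$ in terms of $c_0$ and the $c_i$.

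**Key steps.** First I would recall, from the proof of $L \subseteq L_3$ in Lemma \ref{Lequalfields}, the multiplicative identity $1-\e^{-j} = \e^{-j}(1-\e^{j})\cdot(-1)$ for $j$ in the "upper half," which after writing $j = p-i$ with $1 \le i \le (p-1)/2$ gives a relation in $K^*/(K^*)^p$ of the shape $1 - \e^{-j} \equiv (1-\e^{-i})^{-1}\cdot \e^{\,?}$ up to $p$th powers. Concretely: $(1-\e^i)/(1-\e^{-i}) = -\e^i$, so $1-\e^{-i} = -\e^{-i}(1-\e^i)$, and also $1-\e^{-j} = 1-\e^{i}$ when $j \equiv -i$; combining, $1-\e^{-j} = 1 - \e^{i} = -\e^{i}(1-\e^{-i})$. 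Since $-1 = (-1)^p$ is a $p$th power in $K$ (as $p$ is odd), applying the homomorphism $\kappa(-)(\sigma)$ yields
\[
\kappa(1-\e^{-j})(\sigma) = i\cdot\kappa(\e)(\sigma) + \kappa(1-\e^{-i})(\sigma) = i\,c_0 + c_i .
\]
Here I use that $\kappa(\theta)(\sigma)$ is a homomorphism in $\theta$ (valid because $\mu_p \subset K$, as noted after Definition \ref{Kummer_map_def}) and that $\kappa(\e^i)(\sigma) = i\,\kappa(\e)(\sigma)$. Then I would read off the slots of $\Psi_\sigma$ in order $j = 1,2,\ldots,p-1$: slots $j=1,\ldots,(p-1)/2$ give $c_1,\ldots,c_{(p-1)/2}$, and slots $j = (p+1)/2,\ldots,p-1$, corresponding to $i = (p-1)/2,(p-3)/2,\ldots,1$ respectively, give $\tfrac{p-1}{2}c_0 + c_{(p-1)/2},\ \ldots,\ 2c_0 + c_2,\ c_0 + c_1$, which is exactly the claimed formula. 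Finally I would note this map is visibly $\Aa$-linear and that Vandiver's hypothesis is used only to guarantee (via Corollary \ref{C_iso}) that $\Phi$ is an isomorphism so that the $c_i$ genuinely parametrize ${\rm Gal}(L/K)$.

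**Main obstacle.** There is no deep obstacle here; the only thing to get right is bookkeeping: matching the index $j$ of the $j$th factor $\G_m$ (equivalently the $j$th slot of $I$) with the index $i = p-j$ of the basis element $\kappa(1-\e^{-i})$, and tracking the extra factor $\e^{i}$ correctly so that the coefficient of $c_0$ in slot $p-i$ comes out to $i$ rather than $-i$ or $p-i$. I would double-check the exponent by testing $p=3$: then the formula predicts $\Psi_\sigma = (c_1,\ c_1 + c_0)$, and one can cross-check this directly against the $p=3$ computations elsewhere in the paper. The sign/exponent verification is the one place where a careless slip would propagate, so I would carry out that small-$p$ sanity check explicitly before writing the general argument.
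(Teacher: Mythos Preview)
Your proposal is correct and follows essentially the same route as the paper: invoke Proposition~\ref{Pkappac} for the slots of $\Psi_\sigma$, identify the first $(p-1)/2$ slots directly with the $c_i$ via $\Phi$, and for the remaining slots use the identity $(1-\e^i)/(1-\e^{-i}) = -\e^i$ together with the fact that $-1$ is a $p$th power to obtain $\kappa(1-\e^{-j}) = c_{p-j} + (p-j)c_0$. The only difference is cosmetic index management (you set $j$ in the upper half and $i = p-j$; the paper keeps $i$ in the upper half and writes $c_{p-i} - i c_0 = c_{p-i} + (p-i)c_0$), which produces the same formula.
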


\begin{proof}
By Proposition \ref{Pkappac}, $\Psi_{\sigma}$ is computed 
\[\Psi_\sigma=(\kappa(1-\zeta^{-1})(\sigma), \ldots, \kappa(1-\zeta^{-(p-1)})(\sigma))\] with respect to the isomorphism $I$.
For $i = 1,2, \ldots, \frac{p-1}{2}$, then $\kappa(1- \zeta^{-i})$ 
is identified with the projection onto $c_i$, the $(i+1)$st coordinate of $(\ZZ/p)^{\frac{p+1}{2}} \cong {\rm Gal}(L/K)$ via the isomorphism $\Phi$.
Recall that $(1- \zeta^i)/(1-\zeta^{-i}) = -\zeta^i$ and $-1$ is a $p$th power since $p$ is odd.
Thus \[\kappa(1-\e^i) - \kappa(1-\e^{-i}) = i \kappa(\e) = ic_0.\] Rearranging terms yields that $ \kappa(1-\e^{-i})  = \kappa(1-\e^i) - i c_0$.
Applying this equation when $i = \frac{p-1}{2}+1, \ldots, p-1$ shows that $\kappa(1-\e^{-i}) = \kappa(1-\e^{-(p-i)}) - i c_0 = c_{p-i} - i c_0$.  This
implies that $\kappa(1- \zeta^{-i})$ is the projection onto the $(p-i+1)$st coordinate $c_{p-i}$ plus $p-i$ times the projection onto the first coordinate $c_0$.
\end{proof}

\subsection{Coordinate sum of $\Psi_{\sigma}$}

We include the following result for its own interest; it is not needed in the computation of $H_1(U,Y;A)$, $H_1(U;A)$, or $H_1(X;A)$ as Galois modules, and it is not needed in the computations of Section \ref{Scohom}.
For $\sigma \in {\rm Gal}(L/K)$, write $\Psi_\sigma=(c_1, \ldots, c_{p-1})$ as in Corollary \ref{Cformulapsin=p}.

\begin{lemma} \label{Lsumc}
If $\sigma \in G_M$, with $M=\QQ(\sqrt[p]{p})$, then $\sum_{i=1}^{p-1} c_i \equiv 0 \bmod p$.
More generally, if $M_1=\QQ(\zeta_p, \sqrt[p]{p})$ and if
$\tau \in {\rm Gal}(M_1, \QQ(\zeta_p))$ is such that $\tau(\sqrt[p]{p})=\zeta_p^j \sqrt[p]{p}$, 
then $\sum_{i=1}^{p-1} c_i \equiv j \bmod p$.
\end{lemma}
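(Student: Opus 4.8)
The plan is to compute $\sum_{i=1}^{p-1} c_i$ directly from the formula $c_i = \kappa(1-\zeta^{-i})(\sigma)$ of Proposition \ref{Pkappac}, using the multiplicativity of the Kummer map. Since $\kappa$ is a homomorphism $K^* \to \ZZ/p$ (for $\sigma$ fixed), we have
\[
\sum_{i=1}^{p-1} c_i = \sum_{i=1}^{p-1} \kappa(1-\zeta^{-i})(\sigma) = \kappa\!\left(\prod_{i=1}^{p-1}(1-\zeta^{-i})\right)(\sigma) = \kappa\!\left(\prod_{i=1}^{p-1}(1-\zeta^{i})\right)(\sigma).
\]
The key classical fact, recalled in Section \ref{sec:GalLK} from \cite[Lemma 1.4]{washingtonbook}, is that $\prod_{i=1}^{p-1}(1-\zeta^i) = p$. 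Hence $\sum_{i=1}^{p-1} c_i = \kappa(p)(\sigma)$, i.e. the sum of the coordinates of $\Psi_\sigma$ equals the value at $\sigma$ of the Kummer cocycle attached to $p$.

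From this identity both statements are immediate. First, $\kappa(p)(\sigma)$ depends only on the restriction of $\sigma$ to $K(\sqrt[p]{p})$, and $\kappa(p)(\sigma) = 0$ precisely when $\sigma$ fixes $\sqrt[p]{p}$; so if $\sigma \in G_M$ with $M = \QQ(\sqrt[p]{p})$ then $\sigma$ fixes $\sqrt[p]{p}$, giving $\sum c_i \equiv 0 \bmod p$. (One should note $M_1 = K \cdot M$, so restricting an element of $\Gal{K}$ to $\Gal{M_1}$ and then asking whether it fixes $\sqrt[p]{p}$ is the same as asking whether the original element, viewed in $\Gal{M}$, does.) Second, for $\tau \in \rGal{M_1}{K}$ with $\tau(\sqrt[p]{p}) = \zeta_p^j\sqrt[p]{p}$, Definition \ref{Kummer_map_def} gives exactly $\kappa(p)(\tau) = j$, so $\sum_{i=1}^{p-1} c_i \equiv j \bmod p$.

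There is essentially no obstacle here; the only points requiring a line of care are (i) invoking $\prod_{i=1}^{p-1}(1-\zeta^i) = p$ with the correct sign and without stray roots of unity — this is exactly the identity already used in the proof of Lemma \ref{Lequalfields} via $\sqrt[p]{p} = \prod_{i=1}^{p-1}\sqrt[p]{1-\e^i}$ — and (ii) making sure the reindexing $i \mapsto -i$ on $\{1,\dots,p-1\}$ is a bijection, which it is since $p$ is prime. Thus the proof is a short two-step argument: apply multiplicativity of $\kappa$ to collapse the sum to $\kappa(p)(\sigma)$, then read off the value of $\kappa(p)$ on the relevant Galois elements.
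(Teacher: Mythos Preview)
Your proof is correct and follows essentially the same approach as the paper's: both use the identity $\prod_{i=1}^{p-1}(1-\zeta^{-i})=p$ to reduce $\sum c_i$ to $\kappa(p)(\sigma)$, then read off the result. The only cosmetic difference is that the paper carries out the multiplicativity of $\kappa$ by hand, writing $\prod \sigma(\sqrt[p]{\theta_i}) = \zeta_p^{\sum c_i}\sqrt[p]{p}$, whereas you invoke it as the statement that $\kappa(-)(\sigma)$ is a homomorphism $K^*\to\ZZ/p$.
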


\begin{proof}
Write $\theta_i=1-\zeta_p^{-i}$ and note that $\prod_{i=1}^{p-1} \theta_i = p$.
Thus $\prod_{i=1}^{p-1} \sqrt[p]{\theta_i} = \sqrt[p]{p} \in M$ is fixed by $\sigma \in G_M$.
So $\prod_{i=1}^{p-1} \sigma(\sqrt[p]{\theta_i}) = \sqrt[p]{p}$.
By definition, $\sigma(\sqrt[p]{\theta_i}) = \zeta_p^{\kappa_p(\theta_i) \sigma} \sqrt[p]{\theta_i}$.
By Proposition \ref{Pkappac}, $c_i=\kappa_p(\theta_i) \sigma$.
Thus, 
\[\sqrt[p]{p}=\prod_{i=1}^{p-1} \zeta_p^{c_i} \sqrt[p]{\theta_i} = \zeta_p^{\sum_{i=1}^{p-1} c_i} \sqrt[p]{p}.\]
It follows that $\sum_{i=1}^{p-1} c_i \equiv 0 \bmod p$.

Similarly, 
\[\zeta_p^j \sqrt[p]{p}= \tau(\sqrt[p]{p})=\prod_{i=1}^{p-1} \zeta_p^{c_{i,\tau}} \sqrt[p]{\theta_i} 
= \zeta_p^{\sum_{i=1}^{p-1} c_{i,\tau}} \sqrt[p]{p},\]
so $\sum_{i=1}^{p-1} c_{i, \tau} \equiv j \bmod p$.
\end{proof}

\section{Explicit computation of $B_\sigma$} \label{Sexplicit}

\subsection{Determining $B_\sigma$ from $\Psi_\sigma$}

Recall from Fact \ref{fact:Gammaunique} that $\Gamma_\sigma $ is an element of $\bar \Lambda_0^\times$, unique modulo the kernel of $\dsh: \bar \Lambda_0^\times \to \Lambda_1^\times$, such that 
\[\dsh (\Gamma_\sigma) = B_\sigma.\] 
Corollary \ref{cor:GammaPsi} determines the coefficients of the logarithmic derivative $\dlog \Gamma_\sigma$; they are the ones appearing in $\Psi_\sigma$, and explicitly described in Proposition \ref{Pkappac}.

When $n$ is prime, the kernel of $\dlog$ is easy to manage
and thus $\Psi_\sigma$ determines the action of $\Gal{K}$ on $H_1(U,Y;A)$
as seen in the next result.
This result is implicit in \cite[10.5]{Anderson}.

\begin{proposition}\label{prop:BfromPsi}
Let $n=p$ be a prime. Then $ \Psi_\sigma$ uniquely determines $B_\sigma$.
\end{proposition}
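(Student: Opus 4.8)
The plan is to reconstruct $B_\sigma$ from $\Psi_\sigma$ by chasing through Anderson's commutative square relating $d^\prime$ and $\dsh$ to $\dlog$, using that in the prime case the kernel of $\dlog$ is as small as possible. First I would recall that by Corollary~\ref{cor:GammaPsi}, $\Psi_\sigma$ determines $\dlog\Gamma_\sigma \in \Omega(\bar\Lambda_0)$ modulo the one-dimensional subspace $\Ash \dlog\e_0$. So the crux is: (1) how much ambiguity is there in passing from $\dlog\Gamma_\sigma$ (known mod $\Ash\dlog\e_0$) to $\Gamma_\sigma \in \bar\Lambda_0^\times/\LL$, and (2) does that ambiguity survive application of $\dsh$? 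For (1), I would identify $\Ker(\dlog: \bar\Lambda_0^\times \to \Omega(\bar\Lambda_0))$. Since $\bar\Lambda_0 = \Ash[\e_0]/(\e_0^p-1) = \Ash[t]/(t-1)^p$ (as $p=0$ in $\Ash$), the module of K\"ahler differentials and the $\dlog$ map are very rigid: a unit $u$ has $\dlog u = du/u = 0$ iff $du=0$ iff $u$ lies in the subring of $p$th powers, which here (again because we are in characteristic $p$ and $\bar\Lambda_0$ is generated by one element with $\e_0^p$ already equal to $1$) forces $u \in \Ash^\times$, the constants. Hence $\Ker(\dlog)$ on units is exactly $\Ash^\times$.

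Next I would pin down the two-step indeterminacy precisely. Knowing $\dlog\Gamma_\sigma$ exactly determines $\Gamma_\sigma$ up to multiplication by $\Ash^\times$; but we only know $\dlog\Gamma_\sigma$ modulo $\Ash\dlog\e_0$, i.e. modulo adding a term $\alpha\,\dlog\e_0$. Since $\dlog\e_0 = \dlog(\e_0)$ and $\e_0 \in \bar\Lambda_0^\times$, any such shift is realized by replacing $\Gamma_\sigma$ by $\Gamma_\sigma \cdot \e_0^{c}$ for an appropriate scalar — more carefully, the freedom is precisely multiplication by the subgroup generated by $\Ash^\times$ and $\e_0$. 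But that subgroup, modulo $\LL$, is: recall from the excerpt that $\LL = \Ker(\dsh)$ is, for $n=p$ prime, the cyclic group of order $p$ generated by $\e_0$ (stated just before Fact~\ref{fact:Gammaunique}), and also $\dsh$ kills $\Ash^\times$ because $d^\prime$ applied to a constant $a$ gives $a\cdot a/a = a$... so one must be slightly careful: I would check that $\dsh$ sends the constant unit $\alpha$ to $\alpha \in \bar\Lambda_1^\times$, which is a constant, whereas $B_\sigma \equiv 1$ modulo the augmentation ideal by Fact~\ref{Faugmentation}, forcing the constant part of $\Gamma_\sigma$ to be $1$; combined with Fact~\ref{fact:Gammaunique}'s uniqueness statement this removes the $\Ash^\times$ ambiguity, and the $\e_0$-ambiguity is exactly $\LL$, which is quotiented out. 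Therefore $\dlog\Gamma_\sigma$ modulo $\Ash\dlog\e_0$ determines $\Gamma_\sigma$ uniquely in $\bar\Lambda_0^\times/\LL$, hence determines $B_\sigma = \dsh(\Gamma_\sigma)$ uniquely.

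Concretely, to make this airtight I would exhibit the lift: write $\dlog\Gamma_\sigma = \sum_{i=1}^{p-1} c_i \e_0^i\,\dlog\e_0$ as in Corollary~\ref{cor:GammaPsi} (discarding the harmless $\e_0^0$-term, which is the $\Ash\dlog\e_0$ coset ambiguity), and note that since $\dlog: 1 + (1-\e_0)\bar\Lambda_0 \to \Omega(\bar\Lambda_0)/\Ash\dlog\e_0$ restricted to the "principal units with trivial constant term" is \emph{injective} (its kernel would be $\Ash^\times \cap (1+(1-\e_0)\bar\Lambda_0) = \{1\}$), there is a unique such $\Gamma_\sigma$ with $\Gamma_\sigma \equiv 1 \bmod (1-\e_0)$. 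Applying the explicit formula for $\dsh$ (the map $d^\prime$ of \eqref{d'def}, base-changed to $\Ash$) to this $\Gamma_\sigma$ then yields $B_\sigma$. The main obstacle I anticipate is verifying the injectivity/kernel computation for $\dlog$ cleanly — i.e. proving $\Ker(\dlog) = \Ash^\times$ on units of $\bar\Lambda_0$ and that the only overlap with principal units is trivial — because this is where the hypothesis "$n=p$ prime" is essential (for composite $n$, $\bar\Lambda_0$ is not a quotient of a polynomial ring in one variable by a $p$-power, $\LL$ is larger, and $\dlog$ has extra kernel, which is exactly the content of Remark~\ref{n_non_prime_Kerdlog_remark}); everything else is a formal diagram chase through Anderson's square.
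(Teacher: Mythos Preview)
Your approach is essentially the paper's: identify the ambiguity in lifting $\Psi_\sigma$ to $\Gamma_\sigma$, split it into the $\Ker(\dlog)=(\Ash)^\times$ piece (removed by the augmentation-$1$ normalization coming from Fact~\ref{Faugmentation}, which is exactly the paper's Lemma~\ref{lem:sumofdi}) and the $\langle\e_0\rangle$ piece (which is $\LL=\Ker\dsh$ and hence irrelevant for $B_\sigma$). The paper organizes this via the commutative square with $\dsh_\Omega$, but the content is the same.

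One genuine slip to fix: in your second paragraph you claim that
\[
\dlog: 1+(1-\e_0)\bar\Lambda_0 \longrightarrow \Omega(\bar\Lambda_0)/\Ash\dlog\e_0
\]
is injective, with kernel $\Ash^\times \cap \bigl(1+(1-\e_0)\bar\Lambda_0\bigr)=\{1\}$. That computes the kernel of $\dlog$ itself, not of the composite with the quotient. Since $\e_0 = 1+(\e_0-1)$ lies in $1+(1-\e_0)\bar\Lambda_0$ and $\dlog\e_0 \in \Ash\dlog\e_0$, the actual kernel of the composite contains all of $\langle\e_0\rangle$; so the normalized $\Gamma_\sigma$ is only unique up to powers of $\e_0$. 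This is harmless for the conclusion (as your first paragraph already notes, $\langle\e_0\rangle=\LL$), but the injectivity statement as written is false.

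A smaller point worth making explicit: your assertion that ``the freedom is precisely multiplication by the subgroup generated by $\Ash^\times$ and $\e_0$'' is correct, but the reason is not that every shift $\alpha\dlog\e_0$ with $\alpha\in\Ash$ is realized by some $\e_0^c$. Rather, a short computation (solve $(1+y)u'=\alpha u$ in $\Ash[y]/(y^p)$) shows that $\alpha\dlog\e_0$ lies in the image of $\dlog$ only when $\alpha\in\FF_p$, so $\dlog^{-1}(\Ash\dlog\e_0)=\Ash^\times\cdot\langle\e_0\rangle$ on the nose. This is the step where ``$n=p$ prime'' is really used.
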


The following lemmas will be useful for the proof of Proposition \ref{prop:BfromPsi}.

\begin{lemma}\label{lemma:kerdlog}
The kernel of $\dlog: \bar\Lambda_0^\times \to \Omega(\bar\Lambda_0)$ consists of elements $x=\sum_{0\leq i <n} a_i \e_0^i$ such that $ia_i =0 \in \Aa$ for all $0\leq i < n$. In particular, when $n$ is prime, the kernel of $\dlog$ consists of the constant (in $\e_0$) invertible polynomials $(\Ash)^\times \subset \bar\Lambda_0^\times$.
\end{lemma}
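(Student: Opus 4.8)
The plan is to compute the kernel of the logarithmic derivative explicitly on the monomial basis of $\bar\Lambda_0 = \Ash[\e_0]/(\e_0^n-1)$. Recall that $\Omega(\bar\Lambda_0)$ is free of rank one over $\bar\Lambda_0$ on the generator $\dlog\e_0 = d\e_0/\e_0$, and that $d\e_0^i = i\e_0^{i-1}d\e_0$, so $\dlog(\e_0^i) = i\,\dlog\e_0$ in $\Omega(\bar\Lambda_0)$. Hence for a unit $x = \sum_{0\le i<n} a_i\e_0^i$ one has, by the Leibniz rule, $dx = \bigl(\sum_i i a_i \e_0^{i-1}\bigr)d\e_0$, and therefore $\dlog x = x^{-1}dx$. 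The point is that $\dlog x = 0$ in $\Omega(\bar\Lambda_0)$ if and only if $dx = 0$ (since $x$ is a unit), i.e.\ if and only if $\sum_i i a_i \e_0^{i-1} = 0$ in $\bar\Lambda_0$. Because $1,\e_0,\dots,\e_0^{n-1}$ form an $\Ash$-basis of $\bar\Lambda_0$ and multiplication by $\e_0$ permutes them, the vanishing of $\sum_i i a_i\e_0^{i-1}$ is equivalent to $i a_i = 0$ in $\Aa$ for every $0\le i<n$.

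**First I would** make precise that $\dlog$ really is a homomorphism from the group of units to the additive group $\Omega(\bar\Lambda_0)$: this is the standard identity $\dlog(xy) = \dlog(x)+\dlog(y)$, valid because $d$ is a derivation and $x,y$ are invertible. **Next I would** carry out the basis computation above to identify $\Ker(\dlog)$ with $\{\sum_i a_i\e_0^i : ia_i = 0 \text{ in } \Aa,\ 0\le i<n\}$, intersected with the units; but in fact every such element is automatically a unit — I would note that its image in $\bar\Lambda_0/(\e_0-1)\cong\Ash$ is $\sum_i a_i$, and more relevantly one should simply observe $\Ker(\dlog)$ is a subgroup of $\bar\Lambda_0^\times$ by the homomorphism property, so no separate invertibility check is needed beyond consistency. **Finally**, specializing to $n=p$ prime: if $ia_i = 0$ in $\Z/p$ and $1\le i<p$, then $i$ is invertible mod $p$, forcing $a_i = 0$; so $x = a_0$ is a constant, and since $x$ is a unit $a_0\in(\Ash)^\times$. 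Conversely every element of $(\Ash)^\times$ is clearly killed by $\dlog$. This gives $\Ker(\dlog) = (\Ash)^\times$ when $n$ is prime.

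**The main obstacle** is essentially bookkeeping rather than any genuine difficulty: one must be careful that the relation $\e_0^n = 1$ does not introduce extra elements into the kernel. Concretely, when expressing $dx$ one should either work in $\Omega(\bar\Lambda_0)$ directly — where the relation $n\e_0^{n-1}d\e_0 = d(\e_0^n) = d(1) = 0$ holds — or note that since $\gcd(n,\cdot)$ only matters through the coefficients $ia_i \bmod n$, the index shift $i\mapsto i-1$ on the basis is a bijection of $\{0,1,\dots,n-1\}$ and the condition $ia_i=0$ is unaffected. I do not anticipate needing Vandiver's conjecture or any of the cyclotomic-units input here; this lemma is purely formal, and the argument above is complete modulo writing out these routine verifications.
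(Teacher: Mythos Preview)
Your argument is correct and complete: the computation $dx = \bigl(\sum_i i a_i \e_0^{i-1}\bigr)d\e_0$ together with the freeness of $\Omega(\bar\Lambda_0)$ over $\bar\Lambda_0$ on $\dlog\e_0$ (equivalently on $d\e_0$, since $\e_0$ is a unit) gives exactly the condition $ia_i=0$ in $\Aa$. The paper in fact states this lemma without proof, treating it as an immediate computation; your write-up supplies precisely the expected verification, and the care you take with the relation $n\e_0^{n-1}d\e_0=0$ (trivial since $n=0$ in $\Ash$) is the only point where anything could conceivably go wrong.
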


\begin{remark}\label{n_non_prime_Kerdlog_remark}
On the contrary, when $n$ is not prime, this kernel can be significantly larger. For example, when $n=6$, it contains elements such as $3\e_0^2+2\e_0^3$. 
\end{remark}

The following characterization of $\Gamma_\sigma$ will be used to pinpoint the exact element in a coset that $\Gamma_\sigma$ represents.

\begin{lemma}\label{lem:sumofdi}
Write $\Gamma_\sigma = \sum_{0\leq i < n} d_i \e_0^i$, with $d_i \in \Ash$, for an element in $\bar \Lambda_0^\times$ which is a $\dsh$-preimage of $B_\sigma$. Then 
$d_{\Sigma}:=\sum_{0\leq i <n} d_i = 1$.
\end{lemma}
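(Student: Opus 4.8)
The plan is to use the augmentation-type map $\bar\Lambda_0^\times \to (\Ash)^\times$ sending $\e_0 \mapsto 1$, i.e. the map that records $d_\Sigma = \sum_i d_i$, and to compute its composition with $\dsh$. First I would recall the definition of $d'$ from \eqref{d'def}: setting $\e_0 = \e_1 = 1$ in the formula
\[ \sum a_i \e_0^i \mapsto \frac{\left(\sum a_i \e_0^i\right)\left(\sum a_i \e_1^i\right)}{\sum a_i \e_0^i \e_1^i} \]
sends $\Gamma_\sigma = \sum d_i \e_0^i$ to $d_\Sigma^2/d_\Sigma = d_\Sigma$. On the other hand, the same substitution $\e_0 = \e_1 = 1$ applied to $B_\sigma = \sum b_{i,j}\e_0^i\e_1^j$ gives $\sum_{i,j} b_{i,j}$, which is the total sum of all entries of the coefficient matrix of $B_\sigma$.

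Next I would evaluate that total sum. By Fact \ref{Faugmentation}, $B_\sigma - 1 \in (1-\e_0)(1-\e_1)\Lambda_1$, and the augmentation (substituting $\e_0 = \e_1 = 1$) kills the ideal $(1-\e_0)(1-\e_1)\Lambda_1$; hence $\sum_{i,j} b_{i,j} = 1$. (Alternatively one can derive this from Facts \ref{fact:symmetry} and \ref{fact:zerosum}: the rows $1,\dots,n-1$ each sum to zero, so the total sum equals the sum of the zeroth row, which is $b_{0,0}+\dots+b_{0,n-1}$; and one checks similarly that this equals $b_{0,0}$ plus zero, with $b_{0,0}$ pinned down by the augmentation statement — but invoking Fact \ref{Faugmentation} directly is cleaner.) Combining the two computations, since $\dsh(\Gamma_\sigma) = B_\sigma$ and the augmentation-to-$(\Ash)^\times$ map is multiplicative and commutes with $\dsh$ via the commutative description of $d'$ above, we get $d_\Sigma = 1$.

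The only subtlety — and the one place to be careful — is that $\Gamma_\sigma$ is only well-defined modulo $\LL = \Ker(\dsh)$, which for $n$ prime is the cyclic group generated by $\e_0$ (stated just before Fact \ref{fact:Gammaunique}). So I must check that the quantity $d_\Sigma$ does not depend on the coset representative: multiplying $\Gamma_\sigma$ by $\e_0^k$ changes $\sum d_i\e_0^i$ to $\sum d_i \e_0^{i+k}$, whose coefficient sum is still $\sum d_i = d_\Sigma$. So $d_\Sigma$ is genuinely an invariant of the coset, and the argument above shows it equals $1$. This is the main (very mild) obstacle; the rest is the two short substitution computations. For general $n \geq 3$ the same argument works verbatim once one knows $\LL$ consists of elements with coefficient-sum $1$ (true since $\e_0$ generates it in the prime case, and in general $\LL$ lies in the kernel of $d'$, hence of its augmentation, forcing coefficient-sum $\pm 1$; for the statement as used here $n=p$ suffices).
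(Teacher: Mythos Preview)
Your proof is correct and follows essentially the same approach as the paper: both apply the augmentation $\e_0,\e_1 \mapsto 1$ to the equation $\dsh(\Gamma_\sigma)=B_\sigma$ and invoke Fact~\ref{Faugmentation}. Your version is slightly more streamlined---you read off $d_\Sigma = \sum b_{i,j} = 1$ directly, whereas the paper clears the denominator first, obtains $d_\Sigma(d_\Sigma-1)=0$, and then uses invertibility of $d_\Sigma$; your discussion of the coset ambiguity in $\LL$ is unnecessary here since the lemma concerns an arbitrary fixed preimage, but it does no harm.
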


\begin{proof}
By Fact \ref{Faugmentation}, $B_\sigma - 1$ is in the augmentation ideal $(1-\epsilon_0)(1-\epsilon_1) \Lambda_1$. Since 
\[B_\sigma -1 =  \frac{\left( \sum d_i \e_0^i\right) \left( \sum d_i \e_1^i\right)}{\left( \sum d_i \e_0^i \e_1^i\right)} -1, \]
it lies in the augmentation ideal if and only if the difference 
\[ \left( \sum d_i \e_0^i\right) \left( \sum d_i \e_1^i\right) - \left( \sum d_i \e_0^i \e_1^i\right) \]
does. But the augmentation of the latter is precisely $(d_\Sigma^2-d_\Sigma) = d_\Sigma(d_\Sigma-1)$. As $\Gamma_\sigma$ is invertible, $d_\Sigma$ must also be invertible, hence $d_\Sigma=1$.
\end{proof}

We are now ready to prove Proposition \ref{prop:BfromPsi}.

\begin{proof}
Consider $\Psi_\sigma = \sum_{0\leq i < n} c_i \e_0^i \dlog \e_0$, with $c_i \in \Ash$. 
By Fact \ref{fact:Gammaunique}, \cite[Theorem 9]{Anderson}, 
$B_\sigma$ is uniquely determined by $\Gamma_\sigma$ in an explicit way, as $B_\sigma =\dsh(\Gamma_\sigma)$. Hence it suffices to show that $\Gamma_\sigma$ is determined by $\Psi_\sigma$ in a way unique modulo the kernel of $\dsh$.

Corollary \ref{cor:GammaPsi} gives that 
\[\dlog \Gamma_\sigma = \alpha \dlog\e_0 + \sum_{0\leq i < n} c_i \e_0^i \dlog \e_0, \]
for some $\alpha\in \Ash$. Note that the kernel $\LL$ (cf. Fact \ref{fact:Gammaunique}) of 
$\dsh:\bar\Lambda_0^\times \to \bar \Lambda_1^\times$ maps under $\dlog$ to the kernel $\Ker(\dsh_\Omega)$ of the map 
\[\dsh_{\Omega}: \Omega(\bar \Lambda_0) \to \Omega(\bar \Lambda_1), \]
which is given by $\dlog(\dsh) $, i.e.
\[\dsh_{\Omega}  (\sum a_i\e_0^i \dlog\e_0 )  = \sum a_i \e_0^i(1 - \e_1^i ) \dlog\e_0  +  \sum a_i \e_1^i(1 - \e_0^i ) \dlog\e_1.   \]
By \cite[8.5.1]{Anderson}, $\Ker(\dsh_\Omega)$ is precisely $\Ash \dlog\e_0$. When $n$ is prime, 
$\dlog: \LL \to \Ker(\dsh_\Omega)$ is an isomorphism by \cite[8.3.1]{Anderson}, which determines $\LL$.
Hence the ambiguity that $\alpha$ introduces is irrelevant for the computation of $B_\sigma$.

The remaining obstruction to reconstructing $\Gamma_\sigma$, and therefore $B_\sigma$, is the kernel of $\dlog:\bar\Lambda_0^\times \to \Omega(\bar\Lambda_0)$.

By Lemma \ref{lemma:kerdlog}, when $n $ is prime, the kernel of $\dlog$ is $\Ash \simeq \Fpbar^\times \subset \Lambda_0^\times$. Suppose $a$ lies in this kernel; this means that $\dlog(a\Gamma_\sigma) = \dlog(\Gamma_\sigma)$. On the other hand, $\dsh (a \Gamma_\sigma ) = a \dsh(\Gamma_\sigma) = a B_\sigma $, thus $a$ could introduce an ambiguity. 

Nonetheless, this ambiguity can be eliminated using Lemma \ref{lem:sumofdi}, which asserts that the sum of the coefficients of $\Gamma_\sigma $ is fixed and equals one. Hence the sum of the coefficients of  $a \Gamma_\sigma$, for $a \in \Fpbar^\times$, must be $a$.
By Lemma \ref{lem:sumofdi}, this implies that $a\Gamma_\sigma$ is not a preimage of $B_\sigma$ unless $a=1$.

In conclusion, when $n$ is prime, $\dlog \Gamma_\sigma$ uniquely determines $\Gamma_\sigma$ and therefore $B_\sigma$.
\end{proof}

In theory, by Proposition \ref{prop:BfromPsi}, the coefficients $c_i$ of $\Psi_\sigma$ studied in Section \ref{Skappa}
uniquely and explicitly determine the coefficients of $B_\sigma$, and thus the action of $\Gal{K}$ on $H_1(U,Y; A)$. 
We carry out this computation explicitly when $n=3$ in the following subsection.

\subsection{The case $n=3$}

Consider the smallest example, i.e., that of $n=3$.  Write
\[\Psi_\sigma = (c_1\e_0+ c_2\e_0^2) \dlog\e_0,\] for $c_1,c_2 \in \Fpbar \simeq \Ash$. Write
\[\Gamma_\sigma = d_0 +d_1\e_0+d_2\e_0^2,\] with $d_i \in \Fpbar$ such that $d_0+d_1+d_2 =1$. 
To determine the $d_i$'s in terms of the $c_i$'s, it is easier to work with the nilpotent variable $y=\e_0-1$ instead of $\e_0$
and use the basis $dy = \e_0 \dlog\e_0$ of $\Omega(\Lambda_0)$.

Indeed, $\Gamma_\sigma = 1+(d_1-d_2)y+d_2 y^2$, and
\[ \Psi_\sigma =  (c_1+c_2  + c_2 y) dy.  \]

By Fact \ref{fact:dlogPsi}, $\dlog\Gamma_\sigma$ agrees with $\Psi_\sigma$ modulo terms in $ \bar\F_3 \dlog \e_0 =  \bar\F_3 (y+1)^2 dy$. Therefore, for some $\alpha \in \bar\F_3$, one sees that
\[\dlog \Gamma_\sigma = \Psi_\sigma + \alpha (y+1)^2 dy, \]
which yields the equalities
\begin{align*}
d_2 - d_1 &= c +\alpha \\
-d_2 &= (c+\alpha)^2 + c_2 -\alpha \\
0 &= d_2(c+ \alpha) + (c+\alpha)(c_2-\alpha)+\alpha,
\end{align*}
where $c= c_1 +c_2$.
In particular, $\alpha$ must be a solution of the polynomial equation
\[\alpha^3 - \alpha + c^3 =0. \]
For an arbitrary choice of solution $\alpha$, the coefficients of $\Gamma_\sigma$ are
\begin{align*}
d_1 & = c_1 - \alpha -(c+\alpha)^2,\\
d_2 & =-c_2 +\alpha -(c+\alpha)^2.
\end{align*}

Note that the inverse of $\Gamma_\sigma$ expressed in the original $\e_0$-basis is
\[
\Gamma_\sigma^{-1} = (1 + d_1 +d_2 +(d_2-d_1)^2  ) + ((d_2-d_1)^2 - d_1 )\e_0 + ((d_2-d_1)^2 - d_2 )\e_0^2. \]
In terms of the $c$'s and $\alpha$, this becomes
\[ 
\Gamma_\sigma^{-1} = (1+c_1-c_2 - (c+\alpha)^2) + (c_2 + c - (c+\alpha)^2)\e_0 + ( c_2 - \alpha -(c+\alpha)^2)\e_0^2.
\]

Now $B_\sigma=\dsh(\Gamma_\sigma)$ can be computed. 

\begin{lemma}\label{p=3formulaB}
Suppose $\Psi_\sigma = (c_1 \e_0 + c_2 \e_0^2) \dlog \e_0$, and let $b_{i,j}$ be the coefficient of $\e_0^i\e_1^j$
in $B_\sigma$. Then
\begin{equation}\label{eq:BsfromCs}
\begin{aligned}
b_{0,0} &= 1+ c_2-c_1 - (c_2-c_1)^2\\
b_{0,1} & = c_1-(c_2- c_1)^2 \\
b_{1,1} & = -c_1-(c_2- c_1)^2.
\end{aligned}
\end{equation}
The rest of the coefficients are determined by symmetry $b_{i,j}=b_{j,i}$
and the fact that $b_{0,0}+b_{0,1}+b_{0,2} = 1$, $b_{1,0}+b_{1,1}+b_{1,2} = 0$, and $b_{2,0}+b_{2,1}+b_{2,2} = 0$.
\end{lemma}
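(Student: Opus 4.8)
The plan is to carry out the explicit computation of $B_\sigma = \dsh(\Gamma_\sigma)$ using the formula for $d^\prime$ recalled in \eqref{d'def}, starting from the expression for $\Gamma_\sigma$ already derived above in terms of $c_1,c_2$ and the auxiliary root $\alpha$. First I would take the coefficients $d_0 = 1 - d_1 - d_2$, $d_1 = c_1 - \alpha - (c+\alpha)^2$, $d_2 = -c_2 + \alpha - (c+\alpha)^2$ (with $c = c_1+c_2$) and substitute into
\[
B_\sigma = \frac{\left(\sum_i d_i \e_0^i\right)\left(\sum_i d_i \e_1^i\right)}{\sum_i d_i \e_0^i \e_1^i},
\]
clearing the denominator by multiplying through by $\sum_i d_i \e_0^i \e_1^i$ and reducing all exponents modulo $3$ using $\e_0^3 = \e_1^3 = 1$. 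This produces a system expressing each $b_{i,j}$ as a polynomial in $d_0,d_1,d_2$; I would then record $b_{0,0}$, $b_{0,1}$, $b_{1,1}$ and verify that symmetry $b_{i,j} = b_{j,i}$ (Fact \ref{fact:symmetry}), the column-sum relations (Fact \ref{fact:zerosum}), and the augmentation condition (Fact \ref{Faugmentation}) are automatically satisfied, so that the three listed coefficients genuinely determine the rest.

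The key simplification I expect is that, although $\Gamma_\sigma$ itself depends on the choice of solution $\alpha$ of $\alpha^3 - \alpha + c^3 = 0$, the element $B_\sigma$ must not — this is guaranteed by Proposition \ref{prop:BfromPsi} together with Lemma \ref{lem:sumofdi}. So after substituting and simplifying, every occurrence of $\alpha$ should cancel. Concretely, I anticipate that the combination $d_2 - d_1 = c_2 - c_1 + \alpha\cdot(\text{something that vanishes})$; more precisely $d_2 - d_1 = -c_2 + c_1 = -(c_2 - c_1)$ wait — from the formulas $d_2 - d_1 = (-c_2 + \alpha) - (c_1 - \alpha) = -(c_1+c_2) + 2\alpha = -c + 2\alpha$, and $d_1 + d_2 = c_1 - c_2 - 2(c+\alpha)^2$, so $d_0 = 1 - d_1 - d_2 = 1 - c_1 + c_2 + 2(c+\alpha)^2$. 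The outputs $b_{i,j}$ are symmetric functions of the $d_i$ of a controlled shape (degree $2$), and I would expect the $\alpha$-dependence to drop out once one uses $\alpha^3 = \alpha - c^3$ and works in characteristic $3$, where $(c+\alpha)^2$ and its relatives collapse. The final answer should be re-expressed purely in $c_1, c_2$, matching \eqref{eq:BsfromCs}.

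The main obstacle will be the bookkeeping: expanding the product of two length-three polynomials, dividing by a third (equivalently, solving a small linear system over $\bar\F_3[\e_0,\e_1]$), and then substituting the quadratic-in-$\alpha$ expressions for the $d_i$ and showing the $\alpha$'s cancel modulo $\alpha^3 - \alpha + c^3$. It is purely mechanical but error-prone, so I would organize it by first computing $B_\sigma$ abstractly in terms of $d_0, d_1, d_2$ (getting each $b_{i,j}$ as an explicit quadratic form), sanity-checking against Facts \ref{fact:symmetry}, \ref{fact:zerosum}, \ref{Faugmentation}, and only then plugging in the $c_i$ and $\alpha$. A useful cross-check at the end is to specialize to $\sigma$ trivial (all $c_i = 0$, $\alpha = 0$), where one must recover $B_\sigma = 1$, i.e. $b_{0,0} = 1$ and all other $b_{i,j} = 0$; the formulas in \eqref{eq:BsfromCs} indeed give this. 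I would also cross-check consistency with Theorem \ref{thm:action3} via the change of variables relating the coordinates $(c_1, c_2)$ of $\Psi_\sigma$ to the coordinates $(c_0, c_1)$ of $\rGal{L}{K}$ supplied by Corollary \ref{Cformulapsin=p}.
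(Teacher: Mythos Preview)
Your proposal is correct and follows essentially the same route as the paper: the text immediately preceding the lemma has already computed $\Gamma_\sigma$ (and even $\Gamma_\sigma^{-1}$) explicitly in terms of $c_1,c_2,\alpha$, and the lemma is then stated as the outcome of the direct evaluation $B_\sigma=\dsh(\Gamma_\sigma)$, with the $\alpha$-dependence disappearing as you anticipate. The only minor difference is that the paper, having written out $\Gamma_\sigma^{-1}$, would compute $B_\sigma$ as the product $\Gamma_\sigma(\e_0)\,\Gamma_\sigma(\e_1)\,\Gamma_\sigma^{-1}(\e_0\e_1)$ rather than clearing a denominator and solving a linear system; this is a slight organizational simplification but not a different idea.
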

\begin{remark}

From the proof of Corollary \ref{Cformulapsin=p}, if $i=1, \ldots, \frac{p-1}{2}$, then $\kappa(1-\e^{-i}) = c_{p-i} - i c_0$. 
By Proposition \ref{Pkappac}, $c_2= \kappa(1-\e^{-2})$. 
Rearranging terms gives $c_0 = c_2-c_1$, and it follows that Lemma \ref{p=3formulaB} completes the proof of Theorem \ref{thm:action3}.
\end{remark}

\section{Homology of the affine and projective Fermat curve} \label{Shomnonrel}

In this section, we determine the Galois module structure of the homology of the projective Fermat curve $X$ 
and its affine open $U=X-Y$ with coefficients in $\Aa = \ZZ/n$ for all $n \geq 3$.

\subsection{Homology of the affine curve}

We first determine the Galois module structure of $H_1(U)$, where $H_i(U)$ abbreviates $H_i(U;A)$, and more generally, all homology groups will be taken with coefficients in $A$.

The closed subset $Y \subset U$ given by $xy=0$ consists of the $2n$ points 
\[R_i=[\zeta^i: 0:1], \  Q_i=[0: \zeta^i:1].\]
Thus, $H_0(Y) \simeq \Lambda_0 \oplus \Lambda_0 $ is generated by $\e_0 \oplus 0$ and 
$0 \oplus \e_1$.
The first copy indexes the points $R_i$ 
and the second copy indexes the points $Q_i$.
The homomorphism $H_0(Y) \to H_0(U) \simeq \Aa$ sends both $\e_0 \oplus 0$ and $0 \oplus \e_1$ to $1$.

Note that $H_0(Y)$ is an $\Lambda_1$-module via $\e_0 \mapsto \e_0 \oplus 1$ 
and $\e_1 \mapsto 1 \oplus \e_1$.
The boundary map 
$\delta: H_1(U, Y) \to H_0(Y)$
is a $\Lambda_1$-module map given by 
\begin{align}\label{eq:deltabeta}
\beta \mapsto 1 \oplus 0 - 0 \oplus 1.
\end{align}

\begin{lemma} \label{Llong1}
There is an exact sequence of Galois modules
\begin{equation} \label{Eexactrelhom}
0 \to H_1(U) \to H_1(U, Y) \stackrel{\delta}{\to} H_0(Y) \to H_0(U) \to 0.
\end{equation}
The first Betti number of $U$ is $(n-1)^2$.
\end{lemma}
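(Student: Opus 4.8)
The plan is to obtain the four-term exact sequence \eqref{Eexactrelhom} directly from the long exact sequence of the pair $(U,Y)$ in \'etale homology, and then to read off the Betti number by counting dimensions. Since $Y$ is a finite set of $2n$ points, $H_i(Y)=0$ for $i\geq 1$, and since $U$ is a smooth affine curve it is homotopy-equivalent (over $\overline{k}$, in the \'etale sense) to a wedge of circles, so $H_i(U)=0$ for $i\geq 2$; in particular $H_2(U,Y)$ sits in $0\to H_2(U)\to H_2(U,Y)\to H_1(Y)$ and hence $H_2(U,Y)=0$. The relevant part of the long exact sequence is therefore
\[
0\to H_1(U)\to H_1(U,Y)\xrightarrow{\ \delta\ } H_0(Y)\to H_0(U)\to H_0(U,Y)\to 0,
\]
and $H_0(U,Y)=0$ because $U$ is connected and $Y$ is nonempty, so the map $H_0(Y)\to H_0(U)$ is surjective. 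All the maps in the long exact sequence of a pair are Galois-equivariant (the $\mu_n\times\mu_n$-action is likewise compatible), which gives the claimed exact sequence of Galois modules. The formula \eqref{eq:deltabeta} for $\delta$ on the generator $\beta$ is already recorded, so nothing further is needed there.

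For the Betti number, I would use exactness to compute the $\ZZ/n$-rank (or $\mathbb{F}_\ell$-dimension, working one prime at a time, or just additivity of lengths) along the sequence. We have $\operatorname{rk} H_1(U,Y)=n^2$ (stated in Section~\ref{Ssurvey}), $\operatorname{rk} H_0(Y)=2n$ (it is $\Lambda_0\oplus\Lambda_0$), and $\operatorname{rk} H_0(U)=1$. Alternating sum of ranks in the exact sequence \eqref{Eexactrelhom} gives $\operatorname{rk} H_1(U) - n^2 + 2n - 1 = 0$, hence $\operatorname{rk} H_1(U) = n^2 - 2n + 1 = (n-1)^2$.

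The only genuine point requiring care is the vanishing $H_2(U,Y)=0$ and $H_0(U,Y)=0$, i.e.\ justifying that the long exact sequence truncates to four terms; this rests on $U$ being an affine curve (so homologically one-dimensional) and on $Y$ being a nonempty closed subscheme of the connected $U$. I would cite the comparison between \'etale and singular homology over $\CC$ together with the affine Lefschetz-type bound on cohomological dimension, or simply invoke that $U(\CC)$ is an open Riemann surface. I expect this to be the main ``obstacle'' only in the sense of bookkeeping about which reference to cite; the rank count itself is routine.
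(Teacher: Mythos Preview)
Your proposal is correct and follows essentially the same route as the paper: truncate the long exact sequence of the pair using $H_1(Y)=0$ and $H_0(U,Y)=0$, then compute the rank by the alternating sum. One small remark: the vanishing of $H_2(U,Y)$ that you establish is not actually needed, since injectivity of $H_1(U)\to H_1(U,Y)$ already follows from $H_1(Y)=0$ alone (the long exact sequence reads $\cdots\to H_1(Y)\to H_1(U)\to H_1(U,Y)\to\cdots$); the paper accordingly cites only $H_1(Y)=0$ and $H_0(U,Y)=0$.
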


\begin{proof}
This follows from the long exact sequence for relative homology, using the facts
that $H_1(Y)=0$ and $H_0(U, Y)=0$.
The Betti number is the $\Aa$-rank of $H_1(U)$; note that $H_1(U, Y)$, $H_0(Y)$, and $H_0(U)$ are all free $\Aa$-modules, hence
\[{\rm rank}(H_1(U)) = {\rm rank}(H_1(U, Y)) - {\rm rank}(H_0(Y)) + {\rm rank}(H_0(U)).\]
So, the rank of $H_1(U) $ is $ n^2 - 2n + 1 = (n-1)^2.$
\end{proof}

An element $W \in \Lambda_1$ will be written as $W=\sum_{0 \leq i,j \leq n-1} a_{ij} \e_0^i \e_1^j$.

\begin{proposition} \label{Phomaffine}
Let  $W=\sum_{0 \leq i,j \leq n-1} a_{ij} \e_0^i \e_1^j$ be an element of $ \Lambda_1 $, and consider the corresponding element $W \beta$ of $H_1(U, Y)$.
Then $W \beta $ restricts to $ H_1(U)$ if and only if for each $0 \leq j \leq n-1$,
$\sum_{i = 0}^{n-1} a_{ij}=0$, and for each  $0 \leq i \leq n-1$, $\sum_{j = 0}^{n-1} a_{ij}=0$. 
\end{proposition}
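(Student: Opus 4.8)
The plan is to use the exact sequence \eqref{Eexactrelhom} from Lemma~\ref{Llong1}, which identifies $H_1(U)$ with the kernel of the boundary map $\delta\colon H_1(U,Y) \to H_0(Y)$. Since $H_1(U,Y)$ is free of rank one over $\Lambda_1$ with generator $\beta$, an arbitrary element is $W\beta$ with $W = \sum a_{ij}\e_0^i\e_1^j \in \Lambda_1$, and the question becomes: for which $W$ is $\delta(W\beta) = 0$?

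First I would compute $\delta(W\beta)$ explicitly using that $\delta$ is a $\Lambda_1$-module map together with the formula \eqref{eq:deltabeta}, namely $\delta\beta = (1\oplus 0) - (0 \oplus 1)$, and the $\Lambda_1$-module structure on $H_0(Y)$ given by $\e_0 \mapsto \e_0 \oplus 1$ and $\e_1 \mapsto 1 \oplus \e_1$. Concretely, $\delta(W\beta) = W\cdot\bigl((1\oplus 0)-(0\oplus 1)\bigr)$, and since $\e_1$ acts as the identity on the first summand and $\e_0$ acts as the identity on the second, we get
\[
\delta(W\beta) = \Bigl(\sum_{i,j} a_{ij}\e_0^i\Bigr)\oplus 0 \;-\; 0 \oplus \Bigl(\sum_{i,j} a_{ij}\e_1^j\Bigr).
\]
Writing this in the $\Lambda_0$-basis $\{\e_0^i\}$ of the first summand and $\{\e_1^j\}$ of the second, the coefficient of $\e_0^i$ in the first component is $\sum_{j=0}^{n-1} a_{ij}$ and the coefficient of $\e_1^j$ in the second component is $\sum_{i=0}^{n-1} a_{ij}$. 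Since $H_0(Y) = \Lambda_0 \oplus \Lambda_0$ is free over $\Aa$ with this basis, $\delta(W\beta) = 0$ if and only if all these coefficients vanish, i.e.\ $\sum_{j} a_{ij} = 0$ for each $i$ and $\sum_i a_{ij} = 0$ for each $j$. By exactness of \eqref{Eexactrelhom}, $W\beta$ lies in the image of $H_1(U) \hookrightarrow H_1(U,Y)$ exactly under these conditions, which is the claim.

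I do not expect a serious obstacle here; the only point requiring a little care is bookkeeping the $\Lambda_1$-action on $H_0(Y)$ correctly — in particular that $\e_1$ acts trivially on the summand indexed by the $R_i$ and $\e_0$ acts trivially on the summand indexed by the $Q_i$, which is exactly why the two families of linear conditions (row sums and column sums of the coefficient matrix $(a_{ij})$) appear. One should also note consistency with Fact~\ref{fact:zerosum}: the element $B_\sigma$ has all row and column sums zero except the zeroth, reflecting that $(\sigma-1)\beta$ need not lie in $H_1(U)$ but the relevant combinations do; this is a sanity check rather than a step in the proof.
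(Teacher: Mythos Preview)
Your proposal is correct and follows essentially the same argument as the paper: both identify $H_1(U)$ with $\ker\delta$ via Lemma~\ref{Llong1}, compute $\delta(W\beta)$ using the $\Lambda_1$-module structure on $H_0(Y)$ together with \eqref{eq:deltabeta}, and read off the row- and column-sum conditions from the resulting expression $\sum_{i,j} a_{ij}(\e_0^i \oplus 0 - 0 \oplus \e_1^j)$.
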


\begin{proof}
By Lemma \ref{Llong1}, $W \beta \in H_1(U)$ if and only if $W \beta \in {\rm ker}(\delta)$.
Note that, by \eqref{eq:deltabeta}, 
\[\delta(\e_0 \beta) = (\e_0 \oplus 1)(1 \oplus 0 - 0 \oplus 1)=\e_0 \oplus 0 - 0 \oplus 1,\]
and similarly, 
\[\delta(\e_1 \beta) = (1 \oplus \e_1)(1 \oplus 0 - 0 \oplus 1)= 1 \oplus 0 - 0 \oplus \e_1.\]
Thus 
\begin{align*}
\delta(W \beta) & = \sum_{0 \leq i,j \leq n-1} a_{ij} \delta(\e_0^i \e_1^j \beta)\\
& = \sum a_{ij} (\e_0^i \oplus 1)(1 \oplus \e_1^j)(1 \oplus 0 - 0 \oplus 1)\\
& = \sum a_{ij} (\e_0^i \oplus 0 - 0 \oplus \e_1^j).
\end{align*}
So $W \beta \in {\rm ker}(\delta)$ if and only if the rows and columns of $W$ sum to zero.
\end{proof}

\subsection{Homology of the projective curve}

We next determine the Galois module structure of $H_1(X)$, which has rank $2g=n^2-3n+2$.

\begin{proposition} \label{PprojH1}
\begin{enumerate}
\item \label{H1X_exact_sequence}
There is an exact sequence of Galois modules and $A$-modules:
\[0 \to H_2(X) \to H_2(X, U) \stackrel{D}{\to} H_1(U) \to H_1(X) \to 0.\]
\item \label{D_computation}
The image of $D$ is ${\rm Stab}(\e_0 \e_1)$ where ${\rm Stab}(\e_0 \e_1)$ consists of $W \beta \in H_1(U)$ which are invariant under 
$\e_0 \e_1$, i.e., for which $a_{i+1, j+1} = a_{ij}$, where indices are taken modulo $n$ when necessary.
\item \label{H1X=H1U/stab}
As a Galois module and $A$-module, $H_1(X) =  H_1(U)/{\rm Stab}(\e_0 \e_1)$.
\end{enumerate}
\end{proposition}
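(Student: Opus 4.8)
The plan is to deduce Proposition \ref{PprojH1} from the long exact sequence of the pair $(X,U)$ together with an explicit computation of the connecting map $D$, exactly as Proposition \ref{Phomaffine} was deduced from the pair $(U,Y)$. First I would establish part \eqref{H1X_exact_sequence}: the complement $X-U$ is the divisor $Y'$ cut out by $z=0$, namely the $n$ points $[\e^i:\,{-}1:0]$ (or similar), so by excision $H_*(X,U)\cong H_*(X-U)$-type data; more precisely one uses that $H_1(X,U)$ sits in the long exact sequence $\cdots\to H_2(X)\to H_2(X,U)\to H_1(U)\to H_1(X)\to H_1(X,U)\to\cdots$, and one checks $H_1(X,U)=0$ and that the sequence is one of Galois modules since all maps in the topologist's long exact sequence are induced by maps of spaces defined over $K$. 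The vanishing $H_1(X,U)=0$ follows because $X-U$ is a finite set of points, so $H_1(X,U)$ is a quotient of $\widetilde H_0$ of that finite set shifted, or directly: $H_j(X,U)\cong \widetilde H_{j-1}$ of the boundary circles is not quite right — cleaner is Lefschetz duality $H_j(X,U)\cong H^{2-j}_c(U)$ or simply that $X$ is obtained from $U$ by adding $n$ points, so $H_1(X,U)=0$ and $H_2(X,U)$ is free of rank $n$.

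Next I would identify the image of $D$. The key point is geometric: $H_2(X,U)$ is generated by the $n$ relative $2$-cycles given by small disks around the $n$ points at infinity, and these $n$ punctures are permuted transitively and freely by the subgroup $\langle \e_0\e_1\rangle$ of $\Lambda_1$ (since $(\e^i,\e^i)$ fixes the line $z=0$ and rotates the punctures). Hence $H_2(X,U)$ is a free rank-one module over the group ring $A[\langle\e_0\e_1\rangle]\subset\Lambda_1$, and $D$ is $\Lambda_1$-equivariant, so its image is the $A[\langle\e_0\e_1\rangle]$-submodule of $H_1(U)$ generated by $D$ of a single disk class. I would compute $D$ of one such generator to be a $\langle\e_0\e_1\rangle$-invariant element of $H_1(U)$ (the boundary of a disk around a point at infinity is, up to sign, the sum of the vanishing cycles — in any case an explicit element $W_0\beta$ with $W_0$ fixed by $\e_0\e_1$), and conclude that $\im D = A[\langle\e_0\e_1\rangle]\cdot W_0\beta$. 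To see this equals the full ${\rm Stab}(\e_0\e_1)$ as claimed, I would compare ranks: the kernel $H_2(X)$ of $D$ has rank $1$, so $\im D$ has rank $n-1$; meanwhile ${\rm Stab}(\e_0\e_1)$ as described (the $W\beta\in H_1(U)$ with $a_{i+1,j+1}=a_{ij}$) is cut out inside $H_1(U)$, itself of rank $(n-1)^2$, and a direct count using the row/column-sum conditions from Proposition \ref{Phomaffine} shows it has rank exactly $n-1$; since $\im D\subseteq{\rm Stab}(\e_0\e_1)$ and both are free $A$-modules of the same rank with the quotient $H_1(X)$ forced to have the correct rank $2g=n^2-3n+2=(n-1)^2-(n-1)$, the inclusion is an equality. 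Part \eqref{H1X=H1U/stab} is then immediate from exactness: $H_1(X)=\operatorname{coker}(D)=H_1(U)/\im D = H_1(U)/{\rm Stab}(\e_0\e_1)$ as Galois modules.

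The main obstacle I anticipate is pinning down the image of $D$ precisely rather than just up to finite index: showing $\im D$ is all of ${\rm Stab}(\e_0\e_1)$ and not a proper submodule. The rank comparison handles this over $A=\Z/n$ when $n$ is prime cleanly, but for general $n\geq 3$ with $A=\Z/n$ one must be careful that ``free $A$-modules of equal rank with one containing the other'' forces equality — this needs that the cokernel is also free (which it is, being $H_1(X)$ of a smooth projective curve with $A$-coefficients, of rank $2g$), or alternatively an honest verification that the $A[\langle\e_0\e_1\rangle]$-span of the explicit generator $W_0\beta$ realizes every $\e_0\e_1$-invariant element satisfying the Proposition \ref{Phomaffine} conditions. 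I would resolve this by writing down $W_0$ explicitly (the difference of two adjacent ``vanishing cycles'' of the Fermat curve, which one can read off from Anderson's generator $\beta$ and the $\mu_n\times\mu_n$-action) and checking that translating it by $\langle\e_0\e_1\rangle$ and taking $A$-combinations visibly produces a spanning set for the invariant submodule, using that a $\e_0\e_1$-invariant element of $\Lambda_1$ is determined by its ``first row'' $(a_{0,0},a_{0,1},\dots,a_{0,n-1})$ subject to a single sum-zero relation.
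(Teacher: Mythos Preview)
Your overall strategy matches the paper's: use the long exact sequence of the pair $(X,U)$, identify $H_2(X,U)$ via the points at infinity, show $\mathrm{Im}(D) \subseteq \mathrm{Stab}(\e_0\e_1)$, and then finish by a rank comparison. But the geometric fact you invoke for the key inclusion is backwards, and this is a genuine gap. You claim the $n$ punctures at infinity are ``permuted transitively and freely'' by $\langle \e_0\e_1\rangle$. In fact each puncture $[\epsilon:-\epsilon:0]$ is \emph{fixed} by $\e_0\e_1$: the action sends $[x:y:z]$ to $[\e x:\e y:z]$, and $[\e\epsilon:-\e\epsilon:0]=[\epsilon:-\epsilon:0]$. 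This is precisely why the inclusion holds: since each $\eta\in (X\setminus U)(\overline K)$ is fixed by $\e_0\e_1$ and $\e_0\e_1$ preserves orientation, each small loop $D(\eta)$ is individually $\e_0\e_1$-invariant, so $\mathrm{Im}(D)\subseteq\mathrm{Stab}(\e_0\e_1)$. Your version is also internally inconsistent: if the punctures were freely permuted, $D$ of a single disk class could not be $\e_0\e_1$-invariant as you later assert, and the $A[\langle\e_0\e_1\rangle]$-span of an invariant element would have $A$-rank $1$, not $n-1$.

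Two smaller points. First, to get the injection $0\to H_2(X)$ in part~\eqref{H1X_exact_sequence} you need $H_2(U)=0$, which the paper observes follows from $U$ being affine; you should say this. Second, your concern about deducing equality from ``same rank plus inclusion'' over $A=\Z/n$ is reasonable but easily dispatched by cardinality: if $M\subseteq N$ with $M\cong N\cong A^{n-1}$ as finite abelian groups, then $|M|=|N|$ forces $M=N$. No explicit generator $W_0\beta$ is needed.
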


\begin{proof}

\eqref{H1X_exact_sequence}: The long exact sequence in homology of the pair $(X,U)$ implies that the sequence \[\cdots \to H_2(U)  \to H_2(X) \to H_2(X, U) \stackrel{D}{\to} H_1(U) \to H_1(X) \to H_1(X,U) \to \cdots \] is exact. Since $U$ is affine, $H_2(U) = 0$. It thus suffices to show $H_1(X,U) = 0$. This follows from the fact that $H_1(X,U)$ is isomorphic as an abelian group to the singular homology of $(X(\C),U(\C))$, where $X(\C)$ and $U(\C)$ are given the analytic topology.

Here is an alternative proof that $H_1(X,U) = 0$ which does not use the analytic topology and which will be useful for part (2).
It follows from \cite[\S 4 Theorem 1]{Anderson} that Anderson's \'etale homology with coefficients in $\Aa$ \cite[\S 2]{Anderson} is naturally isomorphic to the homology of the \'etale homotopy type in the sense of Friedlander \cite{Friedlander82}. It follows from Voevodsky's purity theorem \cite[Theorem 2.23]{Morel_Voevodsky99} and the factorization of the \'etale homotopy type through $\mathbb{A}^1$ algebraic topology \cite{Isaksen04} that there is a natural isomorphism $H_i(X,U) \cong \tilde{H}_i(\vee_{(X-U)(\overline{K})} \PP_{\overline{K}})$, where $\tilde{H}_i$ denotes reduced homology. (For this, it is necessary to observe that the proof of  \cite[Theorem 2.23]{Morel_Voevodsky99} goes through with the \'etale topology replacing the Nisnevich topology.) 
Thus $$\tilde{H}_i (\PP_{\overline{K}}; \Aa) =\begin{cases} \Aa(1) &\mbox{if } i =2 \\ 
0 & \mbox{otherwise,}\end{cases} $$ where $ \Aa(1) $ denotes the module $\Aa=\Z/n$ with action given by the cyclotomic character. Over $K$, $\Aa(1) =\Aa$. It follows that 
$$ H_1(X,U) =\tilde{H}_1(\vee_{(X-U)(\overline{K})} \PP_{\overline{K}}; \Aa) = \oplus_{(X-U)(\overline{K})}  \tilde{H}_i( \PP_{\overline{K}}; \Aa) = 0.$$ As a third alternative, one can see that $H_1(X,U) = 0$ using \cite[VI Theorem 5.1]{Milne80} and a universal coefficients argument to change information about cohomology to information about homology.

\eqref{D_computation} As above, 
$$H_2(X,U) \cong \tilde{H}_1(\vee_{(X-U)(\overline{K})} \PP_{\overline{K}}) = \oplus_{(X-U)(\overline{K})}  \tilde{H}_i( \PP_{\overline{K}}) = \oplus_{(X-U)(\overline{K})} \Aa(1).$$ For $\eta \in (X-U)(\overline{K})$, let $\eta$ also represent the corresponding basis element of $\oplus_{(X-U)(\overline{K})} \Aa(1)$. 
Then $D(\eta)$ is represented by a small loop around $\eta$.

Note that the coordinates of $\eta$ are $[\epsilon: -\epsilon:0]$ for some $n$th root of unity $\epsilon$. In particular, $\eta$ is fixed by $\e_0 \e_1$. The loop $D(\eta)$ is therefore also fixed by $\e_0 \e_1$ because $\e_0 \e_1$ preserves orientation. 

Consider the subset ${\rm Stab}(\e_0 \e_1)$ of elements of $H_1(U)$ fixed by $\e_0 \e_1$. Then ${\rm Stab}(\e_0 \e_1)$ contains the image of $D$. 
In fact, ${\rm Stab}(\e_0 \e_1)={\rm Image}(D)$. To see this, it suffices to show that both ${\rm Stab}(\e_0 \e_1)$ and ${\rm Image}(D)$ are isomorphic to $\Aa^{n-1}$. 

By \eqref{H1X_exact_sequence}, one sees that ${\rm Image}(D)$ is isomorphic to the quotient of $H_2(X, U)$ by the image of $H_2(X) \to H_2(X, U)$. Since $X$ is a smooth proper curve, $H_2(X) \cong \Aa(1)$ and $H_2(X, U) \cong \oplus_{(X-U)(\overline{K})} \Aa(1)$. The map $H_2(X) \to H_2(X,U)$ can be described as the map that sends the basis element of $\Aa(1)$ to the diagonal element  $\oplus_{(X-U)(\overline{K})} 1$. It follows that ${\rm Image}(D) \simeq \Aa^{n-1}$ as claimed. 

Now $\e_0$ and $\e_1$ act on $H_1(U, Y)$ via multiplication.  Note that these actions have the effect of 
shifting the columns or rows of $W$ and thus stabilize $H_1(U)$. The stabilizer of $\e_0 \e_1$ is isomorphic to $\Aa^{n-1}$ because an element of the stabilizer is uniquely determined by an arbitrary choice of $a_{01}, a_{02}, \ldots, a_{0n}$.

\eqref{H1X=H1U/stab} is immediate from \eqref{H1X_exact_sequence} and \eqref{D_computation}.
\end{proof}

\section{Computing Galois cohomology when $p=3$} \label{Scohom}

In this section, we explicitly compute several cohomology groups when $p=3$.
Let $e=\zeta_0$ and $f=\zeta_1$.

\subsection{Computation of $B_\sigma$}

Let $\sigma$ and $\tau$ denote the generators of $G={\rm Gal}(L/K) \simeq (\Z/3)^2$ such that 
\begin{align*}
\sigma: \sqrt[3]{\zeta} &\mapsto \zeta \sqrt[3]{\zeta}, \qquad &\tau:\sqrt[3]{\zeta} &\mapsto \sqrt[3]{\zeta}\\
\sqrt[3]{1-\zeta^{-1}} &\mapsto \sqrt[3]{1-\zeta^{-1}} \qquad &\sqrt[3]{1-\zeta^{-1}} &\mapsto \zeta\sqrt[3]{1-\zeta^{-1}}.
\end{align*}
The equality $-\zeta(1-\zeta^{-1})=1-\zeta$ shows that $(c_1)_\sigma = 0$,  $(c_2)_\sigma = 1$ and $(c_1)_\tau = 1$, $(c_2)_\tau = 1$.
By Lemma \ref{p=3formulaB}, this implies that
\begin{equation} \label{eq:Betas}
\begin{aligned}
B_\sigma &= 1-(\ee+\ff) + (\ee^2 - \ee \ff +\ff^2) - (\ee^2\ff +\ee \ff^2)\\
& = 1 - (\ee+\ff)(1-\ee)(1-\ff),\\
B_\tau &= 1+(\ee+\ff) - (\ee^2+ \ee \ff+\ff^2) +\ee^2 \ff^2.
\end{aligned}
\end{equation}

\subsubsection{The kernel and image of $B$}

Let $G=\langle \sigma, \tau \rangle$.  
Consider the map 
\[B: \FF_3[G] \to \Lambda_1, \ B(\sigma)=B_\sigma.\]
When $p=3$, the domain and range of $B$ both have dimension $9$.
Of course, $B$ is not surjective since its image is contained in the 6 dimensional subspace of symmetric elements.

\begin{lemma}
When $p=3$, the image of $B$ has dimension 4 and the kernel of $B$ has dimension $5$.
In particular, ${\rm Im}(B)$ consists of symmetric elements whose 2nd and 3rd rows sum to 0, i.e., elements of the form
\[a_{00} + a_{01}(\ee+\ff)+a_{02}(\ee^2+\ff^2)+a_{11}\ee\ff-(a_{01}+a_{11})(\ee^2\ff+\ee \ff^2)+(a_{01}+a_{11}-a_{02})\ee^2\ff^2;\]
and ${\rm Ker}(B)$ is determined by the relations:
\noindent $$B_{\tau^2}+B_\tau+B_1=0,$$
\noindent $$B_{\sigma^2\tau}-B_{\sigma^2}-B_\tau+B_1=0,$$
\noindent $$B_{\sigma \tau}-B_\sigma-B_\tau+B_1=0,$$
\noindent $$B_{\sigma^2 \tau^2}-B_{\sigma^2}-B_{\tau^2}+B_1=0,$$
\noindent $$B_{\sigma \tau^2}-B_\sigma-B_{\tau^2}+B_1=0.$$
\end{lemma}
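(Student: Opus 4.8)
The key structural observation is that $B$ is not merely $\FF$-linear but a homomorphism of $\FF_3$-algebras. Since $\rGal{L}{K}$ fixes $K$, and $\mu_p \subset K$, the Galois action on $\Lambda_1 = \FF_3[e,f]/(e^3-1,f^3-1)$ is trivial, so the cocycle identity $\sigma\tau\cdot\beta = (\sigma\cdot B_\tau)B_\sigma\beta$ collapses to $B_{\sigma\tau} = B_\sigma B_\tau$, while $B_{\mathrm{id}} = 1$. Hence $g\mapsto B_g$ is a monoid homomorphism $G\to(\Lambda_1,\cdot)$, so $B\colon\FF_3[G]\to\Lambda_1$ is an $\FF_3$-algebra map; in particular ${\rm Im}(B)$ is the subalgebra generated by $B_\sigma,B_\tau$ and ${\rm Ker}(B)$ is an ideal.

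Next I would pass to nilpotent coordinates. Put $u=e-1$, $v=f-1$ in $\Lambda_1$, so $u^3=v^3=0$ and $\Lambda_1=\FF_3[u,v]/(u^3,v^3)$ with maximal ideal $\mathfrak{m}=(u,v)$; put $\bar\sigma=\sigma-1$, $\bar\tau=\tau-1$ in $\FF_3[G]$, so $\bar\sigma^3=\bar\tau^3=0$ and $\{\bar\sigma^i\bar\tau^j:0\le i,j\le 2\}$ is an $\FF_3$-basis of $\FF_3[G]$. Using \eqref{eq:Betas} one finds $s:=B(\bar\sigma)=B_\sigma-1=uv(1-u-v)$ and $t:=B(\bar\tau)=B_\tau-1=uv(uv-u-v)$. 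The crux is the elementary fact $u^2v^2\,\mathfrak{m}=0$: it gives $s^2=u^2v^2(1-u-v)^2=u^2v^2$, while $t\in uv\,\mathfrak{m}$ forces $t^2=0$ and $st=0$. Consequently $B(\bar\sigma^i\bar\tau^j)=s^it^j$ vanishes for every $(i,j)$ other than $(0,0),(1,0),(2,0),(0,1)$, whose images are $1,s,s^2,t$ respectively.

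From here both assertions are bookkeeping. Since $s^2=u^2v^2=(e-1)^2(f-1)^2=\sum_{0\le i,j\le 2}e^if^j=:N$, we get that ${\rm Im}(B)$ is the $\FF_3$-span of $1$, $B_\sigma$, $B_\tau$, $N$; writing out their coefficient matrix (in the coordinates $a_{00},a_{01},a_{02},a_{11}$ one gets a matrix of determinant $\pm1$) shows these four elements are linearly independent, so $\dim{\rm Im}(B)=4$ and, by rank--nullity on the $9$-dimensional $\FF_3[G]$, $\dim{\rm Ker}(B)=5$. To recognize the image as the displayed family, I would check directly (or invoke Facts \ref{fact:symmetry} and \ref{fact:zerosum}) that $1$, $B_\sigma$, $B_\tau$, $N$ are symmetric with $\sum_j b_{1j}=\sum_j b_{2j}=0$, so ${\rm Im}(B)$ lies in that subspace, which is $4$-dimensional by the displayed parametrization; equality follows by the dimension count. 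For the kernel, the computation above shows ${\rm Ker}(B)$ is spanned by the five monomials $\bar\sigma\bar\tau,\ \bar\sigma^2\bar\tau,\ \bar\tau^2,\ \bar\sigma\bar\tau^2,\ \bar\sigma^2\bar\tau^2$; translating $(\sigma-1)(\tau-1)$, $(\tau-1)^2=\tau^2+\tau+1$, $(\sigma^2-1)(\tau-1)$, $(\sigma^2-1)(\tau^2-1)$, and $(\sigma-1)(\tau^2-1)$ into the group-element basis yields exactly the five listed relations (using $\sigma^2-1=\bar\sigma^2-\bar\sigma$ and $\tau^2-1=\bar\tau^2-\bar\tau$ in characteristic $3$), and a short triangular elimination shows these five combinations also span that $5$-dimensional space.

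The calculations are entirely routine; the only genuinely necessary ideas are the algebra-homomorphism structure of $B$ and the vanishing $u^2v^2\,\mathfrak{m}=0$. The one point requiring care is verifying that the five stated relations not only lie in ${\rm Ker}(B)$ (immediate on applying $B$) but actually span it, which is what the final dimension count secures.
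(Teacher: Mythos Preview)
Your argument is correct, and it is genuinely different from the paper's, which is simply ``Magma computation using \eqref{eq:Betas}.'' The paper treats $B$ as a raw $\FF_3$-linear map, writes down the $9\times 9$ matrix in the group-element basis, and lets Magma read off rank, image, and kernel. You instead exploit the fact that $g\mapsto B_g$ is multiplicative (since $\Gal{K}$ acts trivially on $\Lambda_1$), so $B$ is an $\FF_3$-algebra map; passing to the nilpotent generators $u=e-1$, $v=f-1$ and $\bar\sigma=\sigma-1$, $\bar\tau=\tau-1$ then reduces everything to the single observation $u^2v^2\mathfrak m=0$, which forces $st=t^2=0$ and $s^2=u^2v^2$. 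This explains \emph{why} the image is four-dimensional (only the monomials $1,\bar\sigma,\bar\sigma^2,\bar\tau$ survive) and \emph{why} the kernel is the monomial ideal $(\bar\sigma\bar\tau,\bar\tau^2)$, from which the five listed relations fall out by the triangular change of basis you indicate. What your approach buys is a computation-free proof that makes the structure transparent and would scale to other small primes; what the paper's approach buys is brevity and certainty at the cost of insight. Your remark about the determinant in the $a_{ij}$-coordinates is inessential, since you already established independence of $1,s,s^2,t$ directly in the $u,v$-basis.
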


\begin{proof}
Magma computation using \eqref{eq:Betas}.
\end{proof}

\subsection{Cohomology of ${\rm Gal}(L/K)$}

Since $\sigma$ has order $3$, by \cite[Example 1.1.4, Exercise 1.2.2]{Brown},
the projective resolution of $\Z$ as a $\Z[\langle \sigma \rangle]$-module is 
\[\Z[G] \stackrel{1-\sigma}{\leftarrow} \Z[G] \stackrel{1+\sigma +\sigma^2}{\leftarrow}  \Z[G]  \stackrel{1-\sigma}{\leftarrow} \cdots \]
where $G={\rm Gal}(L/K) = \langle \sigma, \tau \mid \sigma^3=\tau^3=[\sigma,\tau]=1 \rangle \simeq (\Z/3)^2$. 
By \cite[Proposition V.1.1]{Brown}, 
the total complex associated to the following double complex is a projective resolution of $\Z$ as a $\Z[G]$-module:
\[ \xymatrix{
\Z[G] \ar[d]^{1+\tau+\tau^2} & \Z[G] \ar[l]_{1-\sigma} \ar[d]^{1+\tau+\tau^2} & \Z[G] \ar[l]_{1+\sigma+\sigma^2}\ar[d]^{1+\tau+\tau^2}  \ar@{<-}[r]^{1-\sigma} &\\
\Z[G] \ar[d]^{1-\tau} &\Z[G] \ar[l]_{-(1-\sigma)} \ar[d]^{1-\tau} & \Z[G] \ar[l]_{-(1+\sigma+\sigma^2)} \ar[d]^{1-\tau}  \ar@{<-}[r]^{1-\sigma} &\\
\Z[G]  & \Z[G] \ar[l]_{1-\sigma} & \Z[G] \ar[l]_{1+\sigma+\sigma^2}  \ar@{<-}[r]^{1-\sigma} &
 }\]
Therefore, to compute $H^1(G,M)$, one can compute the cohomology of the complex
\[ \xymatrix@R=0pt@C+15pt{
	& 		& M \\
	& M \ar[ru]^{1+\sigma+\sigma^2} \ar[rd]^{1-\tau}		& \oplus \\
M\ar[ru]^{1-\sigma} \ar[rd]_{1-\tau}	& \oplus	& M\\
	& M 	\ar[ru]^{-(1-\sigma)} \ar[rd]_{1+\tau+\tau^2}	& \oplus \\
	&	 	& M.
}\]

Given $h \in \FF_3[G]$, let ${\rm Ann}_M(h)=\{m \in M \mid hm=0\}$.  Let $M=\Lambda_1$.

\begin{lemma} \label{Lannih}
Let $M=\Lambda_1$ with $\ee=\epsilon_0$ and $\ff = \epsilon_1$.
\begin{enumerate}
\item ${\rm Ann}_M(1+\tau + \tau^2)=M$.
\item ${\rm Ann}_M(1+\sigma + \sigma^2)=(1-\ee, 1-\ff)$ consists of all $m = \sum m_{ij} \ee^i \ff^j$ such that $\sum m_{ij} = 0$.
\item ${\rm Ann}_M(1-\sigma)=(1+\ee+\ee^2, 1+\ff+\ff^2)$.
\item ${\rm Ann}_M(1-\tau)=(\ee-\ff, 1+\ff+\ff^2)$. 
\end{enumerate}
\end{lemma}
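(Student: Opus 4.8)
The plan is to treat each of the four annihilators separately, working in the explicit presentation $M = \Lambda_1 = \F_3[\e,\ff]/(\e^3-1,\ff^3-1)$, where $G = \langle \sigma,\tau\rangle$ acts through the homomorphism $\sigma \mapsto B_\sigma = 1 - (\e+\ff)(1-\e)(1-\ff)$ and $\tau \mapsto B_\tau = 1 + (\e+\ff) - (\e^2+\e\ff+\ff^2) + \e^2\ff^2$ computed in \eqref{eq:Betas}. The guiding principle is that the action of $\sigma$ and $\tau$ on $M$ is by \emph{multiplication} by the unit $B_\sigma$, resp.\ $B_\tau$, in the ring $\Lambda_1$, so $1 + \sigma + \sigma^2$ acts as multiplication by $1 + B_\sigma + B_{\sigma^2}$, and similarly for the other group elements; hence each annihilator ${\rm Ann}_M(h)$ is simply the ideal $\{m \in \Lambda_1 : (\text{the corresponding polynomial})\cdot m = 0\}$, i.e.\ the kernel of multiplication by a specific element of $\Lambda_1$. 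Thus the whole lemma reduces to four ideal-membership computations in the Artinian ring $\Lambda_1$.

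First I would compute the relevant ring elements. For (1): since $B_\tau$ has order $3$ and $B_\tau - 1 \in (1-\e)(1-\ff)\Lambda_1$ lies in the square-zero-ish augmentation ideal (more precisely, writing $B_\tau = 1 + x$ one checks $1 + B_\tau + B_\tau^2 = 3 + 3x + x^2 = x^2$ in characteristic $3$), one verifies directly from \eqref{eq:Betas} that $1 + B_\tau + B_{\tau^2} = 0$ in $\Lambda_1$ — this is exactly the first relation $B_{\tau^2} + B_\tau + B_1 = 0$ already recorded in the kernel description of $B$ above. Hence multiplication by $1+\sigma+\sigma^2$ on $M$ (in the $\tau$-direction) is the zero map, giving ${\rm Ann}_M(1+\tau+\tau^2) = M$. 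For (2): one computes $1 + B_\sigma + B_{\sigma^2}$ and identifies it, up to a unit, with the augmentation element, so that its annihilator is the augmentation ideal $(1-\e, 1-\ff)$; the description "$\sum m_{ij} = 0$" is just the statement that this ideal is the kernel of the augmentation $\Lambda_1 \to \F_3$, which is standard since $\Lambda_1/(1-\e,1-\ff) \cong \F_3$. For (3) and (4): one computes $1 - B_\sigma = (\e+\ff)(1-\e)(1-\ff)$ and $1 - B_\tau$ explicitly, factors them, and reads off the annihilator ideal. For (3), $1 - B_\sigma = (\e+\ff)(1-\e)(1-\ff)$; since $\e+\ff$ and $1-\e$ (hence their product) each have explicit annihilators — $\mathrm{Ann}(1-\e) = (1+\e+\e^2)$ and one checks $\mathrm{Ann}((1-\e)(1-\ff)) = (1+\e+\e^2, 1+\ff+\ff^2)$ — and multiplication by the remaining factor $(\e+\ff)$ must be checked not to shrink the annihilator further, one obtains ${\rm Ann}_M(1-\sigma) = (1+\e+\e^2, 1+\ff+\ff^2)$. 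For (4) one does the analogous factorization of $1 - B_\tau$, which should come out proportional to $(\e - \ff)$ times a unit times $(1-\ff)$-type factor, yielding $(\e-\ff, 1+\ff+\ff^2)$; here I would be careful to verify the factorization of $1 - B_\tau$ by direct expansion.

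The main obstacle is the bookkeeping in parts (3) and (4): establishing that the annihilator of a product $uv$ in the non-domain ring $\Lambda_1$ equals the annihilator one expects, rather than something larger. In $\Lambda_1 = \F_3[\e]/(\e^3-1) \otimes \F_3[\ff]/(\ff^3-1)$ and $\e^3 - 1 = (\e-1)^3$, so each tensor factor is a local Artinian ring $\F_3[t]/(t^3)$ with $t = \e - 1$; $\Lambda_1$ is therefore local with maximal ideal $(1-\e, 1-\ff)$, and the powers of this ideal form a filtration one can compute against. The cleanest route is to pass to the variables $y = \e - 1$, $w = \ff - 1$ (nilpotent of order $3$), rewrite $1 - B_\sigma$, $1 - B_\tau$, $\e+\ff = 2 + y + w$, etc.\ as polynomials in $y, w$ modulo $(y^3, w^3)$, and then compute each annihilator as a kernel of multiplication in this explicit $9$-dimensional $\F_3$-algebra — effectively a finite linear-algebra check, which is how the parallel Lemma preceding this one was verified by Magma. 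I would either cite such a computation or carry out the (short) by-hand verification in the $(y,w)$-coordinates, dimension-counting to confirm each annihilator has the claimed size ($9$, $8$, $4$, and $4$ respectively, matching the cokernel dimensions needed downstream).
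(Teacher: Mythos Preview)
Your approach is essentially the paper's: reduce each part to the kernel of multiplication by an explicit element of $\Lambda_1$, handle (1)--(3) by direct algebra, and do (4) by brute force (the paper invokes Magma; your nilpotent-variable rewrite is a cleaner by-hand substitute). Two concrete corrections are worth making before you execute it.

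First, in part (3) the point about the factor $\ee+\ff$ is not that it might \emph{shrink} the annihilator but that it might \emph{enlarge} it; the reason it does neither is that $\ee+\ff$ is a \emph{unit} in $\Lambda_1$, since in your coordinates $\ee+\ff = -1 + y + w$ has invertible image in the residue field of the local ring $\Lambda_1$. Once that is said, ${\rm Ann}\big((\ee+\ff)(1-\ee)(1-\ff)\big) = {\rm Ann}\big((1-\ee)(1-\ff)\big) = (y^2,w^2) = (1+\ee+\ee^2,\,1+\ff+\ff^2)$ follows immediately.

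Second, your closing dimension counts for (3) and (4) are off, and your guessed factorization of $1-B_\tau$ is not quite right. In the $(y,w)$ coordinates one finds
\[
1 - B_\tau \;=\; y^2 w + y w^2 - y^2 w^2 \;=\; yw\,(y+w-yw),
\]
and a two-line computation shows ${\rm Ann}(1-B_\tau) = \{\,m : a_{00}=0,\ a_{10}+a_{01}=0\,\}$, which is exactly the ideal $(y-w,\,w^2) = (\ee-\ff,\,1+\ff+\ff^2)$. Its dimension is $7$, not $4$; likewise ${\rm Ann}(1-\sigma) = (y^2,w^2)$ has dimension $5$, not $4$. With these two fixes your outline goes through and matches the paper's argument.
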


\begin{proof}
\begin{enumerate}
\item Every $m \in M$ is in the annihilator of $1+\tau+\tau^2$ because $1+B_\tau+B_{\tau^2}$ equals
\[(\ee +\ff) - (\ee^2 +\ee\ff+\ff^2) + \ee^2 \ff^2 + (\ee +\ff)^2 + (\ee^2 +\ee \ff+\ff^2)^2 + \ee \ff \]
\[+(\ee +\ff)(\ee^2 +\ee \ff+\ff^2) - (\ee +\ff)\ee^2 \ff^2 + (\ee^2 +\ee \ff+\ff^2)\ee^2 \ff^2,\]
which is zero.

\item Note that $B_\sigma = 1- (\ee + \ff)(1-\ee)(1-\ff)$,
which gives that
\begin{align*}
1 + B_\sigma+B_{\sigma^2} &= (\ee^2-\ee \ff+\ff^2)(1+\ee+\ee^2) (1+\ff +\ff^2) \\
&= (1+\ee+\ee^2) (1+\ff +\ff^2).
\end{align*}
Note that $(1+B_\sigma+B_{\sigma^2})\ee^i \ff^j = (1+B_\sigma+B_{\sigma^2}),$ so for $m=\sum_{i,j}m_{i,j}\ee^i \ff^j$, 
\[(1+B_\sigma+B_{\sigma^2}) m = (1+B_\sigma+B_{\sigma^2}) (\sum_{i,j} m_{i,j}). \]
Thus ${\rm Ann}_M(1+B_\sigma+B_{\sigma^2})$ consists of $m \in M$ whose entries sum to $0$.

\item
Note that $(1-\sigma)m=0$ if and only if $\sigma m=m$ which in turn simplifies to $(\ee+\ff)(1-\ee)(1-\ff)m=0$.
Thus ${\rm Ann}_M(1-\sigma)$ is generated by the annihilators of 
$1-\ee$ and $1-\ff$ and $\ee+\ff$, which are
$1+\ee+\ee^2$ and $1+\ff+\ff^2$ and 0.

\item A Magma calculation using that $1 - B_\tau=-(\ee+\ff)+(\ee^2+\ff^2)+\ee \ff-\ee^2\ff^2$.
\end{enumerate}
\end{proof}

\subsection{Preliminary calculations}

Consider the maps $X: M \rightarrow M^2$, $Y:M^2 \rightarrow M^3$, and $Z:M^3 \rightarrow M^4$. 
The goal is to compute $\ker(Y)/\im(X)$ and $\ker(Z)/\im(Y)$. 

After choosing a basis for $M$, the maps $X$, $Y$, and $Z$ can be written in matrix form.  The basis of $M$ chosen here is 
\[1, \ff, \ff^2, \ee, \ee\ff, \ee\ff^2, \ee^2, \ee^2\ff, \ee^2\ff^2.\]

By Lemma \ref{Lannih}, 
all of the entries of the matrix $V$ for the map $\Nm(\tau):M \rightarrow M$ are $0$.
All of the entries of the matrix $U$ for the map $\Nm(\sigma):M \rightarrow M$ are $1$
since $\Nm(\sigma)$ acts on each element of $M$ by summing its coefficients.

Let $S$ be the matrix for the map $1-B_\sigma: M \rightarrow M$. 
Let $T$ be the matrix for the map $1-B_\tau: M \rightarrow M$.   
Here are the matrices $S$ and $T$:

\[ S=\left[ \begin{array}{ccc ccc ccc}
0 &1 &2&1&1&1&2&1&0\\
2 &0 &1&1&1&1&0&2&1\\
1 &2 &0&1&1&1&1&0&2\\

2 &1 &0&0&1&2&1&1&1\\
0 &2 &1&2&0&1&1&1&1\\
1 &0 &2&1&2&0&1&1&1\\

1 &1 &1 &2&1&0&0&1&2\\
1 &1 &1 &0&2&1&2&0&1\\
1 &1 &1 &1&0&2&1&2&0\\
\end{array} \right];\] 
and 
\[ T=\left[ \begin{array}{ccc ccc ccc}
0 &2 &1&2&1&0&1&0&2\\
1 &0 &2&0&2&1&2&1&0\\
2 &1 &0&1&0&2&0&2&1\\

1 &0 &2&0&2&1&2&1&0\\
2 &1 &0&1&0&2&0&2&1\\
0 &2 &1&2&1&0&1&0&2\\

2 &1 &0 &1&0&2&0&2&1\\
0 &2 &1 &2&1&0&1&0&2\\
1 &0 &2&0 &2&1 &2 &1& 0\\
\end{array} \right].\] 

The block matrices for $X$, $Y$, and $Z$ are given as follows:
\[ X=\left[ \begin{array}{cc}
S & T
\end{array} \right];\] 

\[ Y=\left[ \begin{array}{ccc}
U & T & 0 \\
0 & -S & V=0
\end{array} \right];\] 

\[ Z=\left[ \begin{array}{cccc}
S & T & 0 & 0  \\
0 & -U & V=0 & 0 \\
0 & 0 & S & T
\end{array} \right].\] 

\subsection{Calculation of $H^1({\rm Gal}(L/K), M)$}

The plan is to compute the cohomology of the complex:
\[ \xymatrix@R=0pt@C+15pt{
	& 		& M \\
	& M \ar[ru]^{1+\sigma+\sigma^2} \ar[rd]^{1-\tau}		& \oplus \\
M\ar[ru]^{1-\sigma} \ar[rd]_{1-\tau}	& \oplus	& M\\
	& M 	\ar[ru]^{-(1-\sigma)} \ar[rd]_{1+\tau+\tau^2}	& \oplus \\
	&	 	& M.
}\]

\begin{lemma}
The kernel of $Y: M^2 \to M^3$ has dimension $13$ and a basis is:
\begin{align*}
(\ff-\ee^2\ff^2) &\oplus 0, \\
(\ee-\ee^2\ff^2) &\oplus 0, \\
(1-\ee^2\ff^2) & \oplus (\ee \ff-\ee \ff^2-\ee^2\ff+\ee^2\ff^2), \\
(\ff^2-\ee^2\ff^2) &\oplus (-\ee\ff+\ee\ff^2+\ee^2\ff-\ee^2\ff^2), \\
(\ee\ff-\ee^2\ff^2) &\oplus (-\ee\ff+\ee\ff^2+\ee^2\ff-\ee^2\ff^2), \\
(\ee\ff^2-\ee^2\ff^2) &\oplus (\ee\ff-\ee\ff^2-\ee^2\ff+\ee^2\ff^2), \\
(\ee^2-\ee^2\ff^2) &\oplus (-\ee\ff+\ee\ff^2+\ee^2\ff-\ee^2\ff^2), \\
(\ee^2\ff-\ee^2\ff^2) &\oplus (\ee\ff-\ee\ff^2-\ee^2\ff+\ee^2\ff^2), \\
0 &\oplus (1-\ee\ff-\ee\ff^2-\ee^2\ff-\ee^2\ff^2), \\
0 &\oplus (\ff+\ee\ff+\ee^2\ff), \\
0 &\oplus (\ff^2+\ee\ff^2+\ee^2\ff^2), \\
0 &\oplus (\ee+\ee\ff+\ee\ff^2), \\
0 &\oplus (\ee^2+\ee^2\ff+\ee^2\ff^2). 
\end{align*}

\end{lemma}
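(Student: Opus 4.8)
The plan is to use Lemma~\ref{Lannih} to replace $Y$ by a transparent system of linear conditions, and then to read off both the dimension and a basis of $\ker Y$. Unwinding the definitions, a pair $a\oplus b\in M^2$ lies in $\ker Y$ precisely when $\Nm(\sigma)a=0$, $(1-B_\tau)a=(1-B_\sigma)b$, and $\Nm(\tau)b=0$. By Lemma~\ref{Lannih}(1) the matrix $V$ of $\Nm(\tau)$ is zero, so the last equation is automatic. By Lemma~\ref{Lannih}(2) the first equation says exactly that $a$ lies in the hyperplane $M_0=(1-\ee,1-\ff)\subset\Lambda_1$ of elements whose coefficients sum to zero, so $\dim_{\FF_3}M_0=8$. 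Hence
\[\ker Y=\{\,a\oplus b : a\in M_0,\ (1-B_\sigma)b=(1-B_\tau)a\,\},\]
and the first projection $a\oplus b\mapsto a$ is a linear map $\ker Y\to M_0$ whose kernel is $0\oplus{\rm Ann}_M(1-\sigma)$.

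The crux is to show that this projection is surjective, i.e.\ that $(1-B_\tau)M_0\subseteq(1-B_\sigma)\Lambda_1$, so that $(1-B_\sigma)b=(1-B_\tau)a$ has a solution $b$ for every $a\in M_0$. For this I would identify $\Lambda_1=\FF_3[u,v]/(u^3,v^3)$ with $u=1-\ee$ and $v=1-\ff$; this is an Artinian local ring with maximal ideal $M_0=(u,v)$ and one-dimensional socle $(u^2v^2)$. From \eqref{eq:Betas} one computes $1-B_\sigma=(\ee+\ff)uv$ and $1-B_\tau=-uv(u+v+uv)$. Since $\ee+\ff\equiv 2\pmod{(u,v)}$ it is a unit, so $(1-B_\sigma)\Lambda_1=uv\Lambda_1$, which has $\FF_3$-basis $\{uv,u^2v,uv^2,u^2v^2\}$; in particular it contains $u^2v^2$ and has dimension $4$. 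On the other hand $1-B_\tau\in uv\cdot(u,v)$, so $(1-B_\tau)M_0\subseteq uv\cdot(u,v)^2=(u^2v^2)$ because $u^3=v^3=0$, and since $u^2v^2\in uv\Lambda_1$ the desired inclusion follows. (Along the way Lemma~\ref{Lannih}(3) gives ${\rm Ann}_M(1-\sigma)=(1+\ee+\ee^2,1+\ff+\ff^2)=(u^2,v^2)$, which has dimension $5$ in characteristic $3$ because $1+\ee+\ee^2=(\ee-1)^2$.)

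With this in hand the projection $\ker Y\to M_0$ is a linear surjection with kernel $0\oplus{\rm Ann}_M(1-\sigma)$, so $\dim_{\FF_3}\ker Y=8+5=13$. To produce a basis: for each of the eight monomials $\ee^i\ff^j$ with $(i,j)\neq(2,2)$ choose some $b_{ij}$ with $(1-B_\sigma)b_{ij}=(1-B_\tau)(\ee^i\ff^j-\ee^2\ff^2)$, and choose a basis $n_1,\dots,n_5$ of ${\rm Ann}_M(1-\sigma)$; then the thirteen vectors $(\ee^i\ff^j-\ee^2\ff^2)\oplus b_{ij}$ and $0\oplus n_k$ lie in $\ker Y$, and they are linearly independent because projecting any relation to the first coordinate kills the $b_{ij}$-terms and forces the eight leading coefficients to vanish (the $\ee^i\ff^j-\ee^2\ff^2$ being a basis of $M_0$), after which independence of the $n_k$ forces the rest. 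Being thirteen independent elements of a thirteen-dimensional space, they form a basis. It then only remains to match this with the list displayed in the statement: to verify that each listed second component (a scalar multiple of $\ee\ff(1-\ee)(1-\ff)$, or $0$) does solve the relevant equation, and that the five with vanishing first component, namely $\ff(1+\ee+\ee^2)$, $\ff^2(1+\ee+\ee^2)$, $\ee(1+\ff+\ff^2)$, $\ee^2(1+\ff+\ff^2)$ and $(1+\ee+\ee^2)-(\ee+\ee^2)(1+\ff+\ff^2)$, form a basis of $(1+\ee+\ee^2,1+\ff+\ff^2)$. This bookkeeping is routine and could equally be confirmed by the Magma computation used for the neighbouring lemmas.

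The one step carrying genuine content is the inclusion $(1-B_\tau)M_0\subseteq(1-B_\sigma)\Lambda_1$: once $\Lambda_1$ is recognised as $\FF_3[u,v]/(u^3,v^3)$ and $1-B_\sigma$, $1-B_\tau$ are factored in terms of $u=1-\ee$ and $v=1-\ff$, it is immediate, but presented as a raw rank computation on $Y$ it is the main obstacle, which is presumably why the authors fall back on Magma.
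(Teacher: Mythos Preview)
Your argument is correct, and it is a genuinely different proof from the paper's, which consists of the single line ``Magma calculation.''

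The paper treats this lemma as a raw linear-algebra computation over $\FF_3$: write $Y$ as an $18\times 27$ matrix in the monomial basis and let Magma return a basis of the kernel. You instead give a structural argument. The change of variables $u=1-\ee$, $v=1-\ff$ identifies $\Lambda_1$ with the local Artinian ring $\FF_3[u,v]/(u^3,v^3)$, and in these coordinates the factorizations $1-B_\sigma=(\ee+\ff)uv$ (with $\ee+\ff$ a unit) and $1-B_\tau=-uv(u+v+uv)$ make the two nontrivial conditions transparent: ${\rm Ann}(1-\sigma)=(u^2,v^2)$ has dimension $5$, and $(1-B_\tau)M_0\subseteq uv\cdot(u,v)^2=(u^2v^2)\subseteq uv\Lambda_1=(1-B_\sigma)\Lambda_1$ gives surjectivity of the first projection onto the $8$-dimensional hyperplane $M_0$. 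The count $13=8+5$ then falls out, and the particular basis in the statement is seen to be of exactly the shape your construction predicts (first components $\ee^i\ff^j-\ee^2\ff^2$, second components scalar multiples of $\ee\ff(1-\ee)(1-\ff)$, together with five generators of $(1+\ee+\ee^2,1+\ff+\ff^2)$).

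What your approach buys is an explanation of \emph{why} the dimension is $13$ and why every second component in the listed basis is a multiple of a single element; what the paper's approach buys is brevity and a machine-checked guarantee that the displayed vectors are exactly right. Your final reduction to ``routine bookkeeping or Magma'' for matching the specific listed basis is appropriate, and the spot checks you indicate (e.g.\ that $(1-B_\tau)(\ff-\ee^2\ff^2)=0$) go through.
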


\begin{proof}
Magma calculation.
\end{proof}

\begin{lemma}
The image of $X: M \to M^2$ has dimension $4$ and a basis is:
\begin{align*}
(1-\ff^2-\ee^2+\ee^2\ff^2) &\oplus (1-\ff^2-\ee\ff+\ee\ff^2-\ee^2+\ee^2\ff), \\
(\ff-\ff^2-\ee^2+\ee^2\ff) &\oplus (1-\ff+\ff^2-\ee-\ee\ff^2-\ee^2+\ee^2\ff^2), \\
(\ee-\ee\ff^2-\ee^2+\ee^2\ff^2) &\oplus (\ff-\ff^2+\ee-\ee\ff-\ee^2+\ee^2\ff^2), \\
(\ee\ff-\ee\ff^2-\ee^2\ff+\ee^2\ff^2) &\oplus (-1+\ff+\ee-\ee\ff^2-\ee^2\ff+\ee^2\ff^2).
\end{align*}
\end{lemma}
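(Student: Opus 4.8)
The plan is to compute $\im(X)$ in two stages: first pin down $\dim_{\FF_3}\im(X)$ by computing $\ker(X)$, and then check that the four displayed vectors lie in $\im(X)$ and are linearly independent over $\FF_3$, so that, being four independent vectors in a four-dimensional space, they automatically form a basis. Recall that $M=\Lambda_1=\FF_3[\ee,\ff]/(\ee^3-1,\ff^3-1)$ and that $X\colon M\to M^2$ is the map $w\mapsto\bigl((1-B_\sigma)w,\,(1-B_\tau)w\bigr)$ whose matrix is $\binom{S}{T}$, so $\dim_{\FF_3}\im(X)=9-\dim_{\FF_3}\ker(X)$ and $\ker(X)={\rm Ann}_M(1-B_\sigma)\cap{\rm Ann}_M(1-B_\tau)$.

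For the kernel I would invoke Lemma~\ref{Lannih}. By part (3), ${\rm Ann}_M(1-B_\sigma)=(1+\ee+\ee^2,\,1+\ff+\ff^2)$, and by part (4), ${\rm Ann}_M(1-B_\tau)=(\ee-\ff,\,1+\ff+\ff^2)$. The identity $(1+\ee+\ee^2)-(1+\ff+\ff^2)=(\ee-\ff)(1+\ee+\ff)$ shows $1+\ee+\ee^2\in(\ee-\ff,\,1+\ff+\ff^2)$, hence ${\rm Ann}_M(1-B_\sigma)\subseteq{\rm Ann}_M(1-B_\tau)$ and $\ker(X)=(1+\ee+\ee^2,\,1+\ff+\ff^2)$. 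This ideal is $5$-dimensional: in the local coordinates $u=\ee-1$, $v=\ff-1$ one has $M\cong\FF_3[u,v]/(u^3,v^3)$ with $1+\ee+\ee^2=u^2$ and $1+\ff+\ff^2=v^2$ in characteristic $3$, and $(u^2,v^2)$ has basis $u^2,u^2v,u^2v^2,v^2,uv^2$ (equivalently, since $\ee+\ff$ is a unit in the local ring $M$, the factorization $B_\sigma=1-(\ee+\ff)(1-\ee)(1-\ff)$ identifies $\ker(1-B_\sigma)$ with ${\rm Ann}_M\bigl((1-\ee)(1-\ff)\bigr)$, which is the same ideal). Therefore $\dim_{\FF_3}\im(X)=9-5=4$.

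It remains to verify that each of the four displayed elements $(v',v'')\in M^2$ lies in $\im(X)$; since $\ker(X)=\ker(1-B_\sigma)$, the projection $\im(X)\to\im(1-B_\sigma)$ onto the first factor is an isomorphism, so this reduces to producing an element $w\in M$ with $(1-B_\sigma)w=v'$ and then checking $(1-B_\tau)w=v''$. Linear independence is immediate from the first components alone, whose leading monomials in the ordering $1,\ff,\ff^2,\ee,\ee\ff,\dots$ are the distinct elements $1,\ff,\ee,\ee\ff$. Concretely one row-reduces the $9\times18$ matrix with rows $\bigl((1-B_\sigma)w,(1-B_\tau)w\bigr)$ for $w$ running over the monomial basis of $\Lambda_1$, which simultaneously reconfirms that the rank is $4$ and produces the displayed reduced-echelon basis. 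This final check is the only genuinely computational step, a routine Magma calculation as in the neighbouring lemmas; the one point requiring a moment's thought is the inclusion ${\rm Ann}_M(1-B_\sigma)\subseteq{\rm Ann}_M(1-B_\tau)$, which collapses the intersection to a single ideal and is where the factorization $(\ee-\ff)(1+\ee+\ff)$ does the work.
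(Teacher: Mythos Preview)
Your argument is correct and is considerably more informative than the paper's own proof, which is the single line ``Magma calculation.'' You recover the dimension of $\im(X)$ conceptually by identifying $\ker(X)$ via Lemma~\ref{Lannih}: the key step, the inclusion ${\rm Ann}_M(1-\sigma)\subseteq{\rm Ann}_M(1-\tau)$ coming from the factorization $(1+\ee+\ee^2)-(1+\ff+\ff^2)=(\ee-\ff)(1+\ee+\ff)$, is a genuine observation not made explicit in the paper, and it explains \emph{why} $\dim\im(X)=4$ rather than merely reporting it. Your further remark that the first projection $\im(X)\to\im(1-B_\sigma)$ is an isomorphism (since $\ker(X)=\ker(1-B_\sigma)$) is also a nice structural point. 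The only place where you still fall back on machine computation is the verification that the four displayed vectors actually lie in $\im(X)$, i.e.\ that the second components are what they should be; this is unavoidable if one wants the specific reduced-echelon basis in the statement, and is exactly the content of the paper's Magma call. In short: same endpoint, but your route isolates the one genuinely computational step and derives the rest from already-established lemmas.
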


\begin{proof}
Magma calculation.
\end{proof}

Note that the image of $X$ is contained in the kernel of $Y$.

\begin{proposition}
The dimension of $H^1({\rm Gal}(L/K), M)$ is 9 and a basis is:
\begin{align*}
(\ff^2-\ee^2)   &\oplus 0, \\
(\ee\ff^2-\ff\ee^2)   &\oplus 0, \\
(\ee^2+\ee^2\ff+\ee^2\ff^2)   &\oplus 0, \\ 
(\ee^2\ff-\ee^2\ff^2)   &\oplus (\ee\ff-\ee\ff^2-\ee^2\ff+\ee^2\ff^2), \\ 
0   &\oplus (1-\ee\ff-\ee\ff^2-\ee^2\ff-\ee^2\ff^2), \\ 
0   &\oplus (\ff+\ee\ff+\ee^2\ff), \\ 
0   &\oplus (\ff^2+\ee\ff^2+\ee^2\ff^2), \\ 
0   &\oplus (\ee+\ee\ff+\ee\ff^2), \\ 
0  &\oplus (\ee^2+\ee^2\ff+\ee^2\ff^2).  
\end{align*}
\end{proposition}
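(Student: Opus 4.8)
The plan is to recognize $H^1(\mathrm{Gal}(L/K),M)$ as the cohomology of the displayed three--term complex at its middle term, that is, as $\ker(Y)/\im(X)$, and then to read off the dimension and a basis from the two preceding lemmas. The identification of $H^1(G,M)$ with this cohomology group is exactly what the double--complex resolution of $\Z$ over $\Z[G]$ provides (\cite[Proposition V.1.1]{Brown}): applying ${\rm Hom}_{\Z[G]}(-,M)$ to the total complex of that double complex and truncating in low degrees produces precisely the maps $X\colon M\to M^2$ and $Y\colon M^2\to M^3$ recorded in Section~\ref{Scohom}, where $X$ is assembled from $1-B_\sigma$ and $1-B_\tau$ and $Y$ is assembled from $\Nm(\sigma)$, $1-B_\tau$, and $\Nm(\tau)=0$. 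So the first step is to make this dictionary precise, writing the differentials in the fixed ordered $\FF_3$--basis $1,\ff,\ff^2,\ee,\ee\ff,\ee\ff^2,\ee^2,\ee^2\ff,\ee^2\ff^2$ of $M$ in terms of the matrices $U=\Nm(\sigma)$, $V=\Nm(\tau)=0$, $S=1-B_\sigma$, $T=1-B_\tau$ introduced there.

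Granting this dictionary, the dimension count is immediate from what has already been established. By the lemma computing $\ker(Y)$, that kernel is $13$--dimensional with an explicit $\FF_3$--basis; by the lemma computing $\im(X)$, that image is $4$--dimensional with an explicit basis; and, as noted immediately afterward, $\im(X)\subseteq\ker(Y)$. Hence $H^1(\mathrm{Gal}(L/K),M)=\ker(Y)/\im(X)$ is an $\FF_3$--vector space of dimension $13-4=9$.

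To justify the specific basis in the statement I would verify two things. First, that each of the nine displayed elements of $M\oplus M$ lies in $\ker(Y)$; this is a direct computation with the matrices $S$ and $T$ together with the vanishing of $\Nm(\tau)$. Second, that the nine displayed elements, together with the four basis elements of $\im(X)$ from the previous lemma, span the $13$--dimensional space $\ker(Y)$ over $\FF_3$ --- a finite rank computation. Given these, the classes of the nine elements span the $9$--dimensional quotient $\ker(Y)/\im(X)$, and being nine in number they form a basis of $H^1(\mathrm{Gal}(L/K),M)$. Both checks are linear algebra over $\FF_3$ and are carried out in Magma.

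There is no conceptual obstacle here; the only point that genuinely requires care is the bookkeeping in the first step --- transcribing the differentials of the double complex of \cite[Proposition V.1.1]{Brown}, including the signs on $-(1-\sigma)$ and $-(1+\sigma+\sigma^2)$, correctly into the block matrices for $X$, $Y$, and $Z$, so that $\ker(Y)$ and $\im(X)$ really do compute the group cohomology one wants. Once those matrices are pinned down, both the dimension count and the basis verification are mechanical.
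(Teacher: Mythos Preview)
Your proposal is correct and follows essentially the same approach as the paper: identify $H^1(G,M)$ with $\ker(Y)/\im(X)$ via the Brown double-complex resolution, invoke the two preceding lemmas for the dimension count $13-4=9$, and verify the explicit basis by a Magma computation. The paper's proof is even terser---it simply says the quotient is computed using the complement function in Magma---so your write-up actually spells out more of the logic. One small imprecision: when you describe the constituents of $Y$, you omit the $-(1-\sigma)$ block; the kernel condition also involves $\Nm(\sigma)$ acting on the first summand, not just $S$, $T$, and the vanishing of $\Nm(\tau)$.
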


\begin{proof}
The quotient $H^1({\rm Gal}(L/K), M)=\ker(Y)/{\rm im}(X)$ can be computed using the complement function in Magma.
\end{proof}

\subsection{Calculation of $H^2({\rm Gal}(L/K), M)$}

In this section, we compute the kernel of $Z:M^3 \to M^4$ modulo the image of $Y:M^2 \to M^3$.

\begin{proposition}
The dimension of $H^2({\rm Gal}(L/K), M)$ is 13 and a basis is:
\[\begin{array}{rcccl}
(\ff+\ee\ff+\ee^2\ff) & \oplus & 0  & \oplus &  0, \\
(\ff^2+ \ee\ff^2+ \ee^2\ff^2) & \oplus  & 0 & \oplus &  0, \\
(\ee+\ee\ff+\ee\ff^2) & \oplus & 0 & \oplus  & 0,\\
(\ee^2+\ee^2\ff+\ee^2\ff^2) & \oplus & 0 &  \oplus & 0,\\
0 &\oplus & (\ff^2-\ee^2\ff^2) & \oplus & 0, \\
0 &\oplus & (\ee\ff^2-\ee^2\ff^2) & \oplus & 0,\\
0 &\oplus & (\ee^2-\ee^2\ff^2) & \oplus & 0,\\
0 &\oplus & (\ee^2\ff -\ee^2\ff^2) & \oplus & 0,\\
0 & \oplus & 0 & \oplus &  (1-\ee\ff-\ee\ff^2 -\ee^2\ff-\ee^2\ff^2),\\
0 & \oplus  & 0 & \oplus & (\ff+\ee\ff+\ee^2\ff),\\
0 & \oplus & 0 & \oplus & (\ff^2+ \ee\ff^2+ \ee^2\ff^2),\\ 
0 & \oplus & 0 & \oplus & (\ee+\ee\ff+\ee\ff^2),\\
0 & \oplus & 0 & \oplus & (\ee^2+\ee^2\ff+\ee^2\ff^2).
\end{array}\]
\end{proposition}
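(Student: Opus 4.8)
The plan is to carry out, for $H^2$, the same chain-level computation already used above for $H^1$. By the projective resolution of $\Z$ over $\Z[G]$ coming from the displayed double complex, $H^2(\rGal{L}{K},M)$ is the cohomology at the middle term of
\[
M^2 \xrightarrow{\;Y\;} M^3 \xrightarrow{\;Z\;} M^4,
\]
i.e.\ $H^2(G,M)=\ker(Z)/\im(Y)$, where $Y$ and $Z$ are the $\FF_3$-linear maps whose block forms are recorded above, assembled from $S=1-B_\sigma$, $T=1-B_\tau$, $U=\Nm(\sigma)$ and $V=\Nm(\tau)=0$. Here the three copies of $M$ in $M^3$ correspond to the antidiagonal positions $(0,2),(1,1),(2,0)$ of the double complex, and the minus sign in the $-U$ block of $Z$ is exactly the sign $-(1+\sigma+\sigma^2)$ appearing in the middle row of the double-complex diagram.

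First I would assemble the matrix of $Z$ in the ordered basis $1,\ff,\ff^2,\ee,\ee\ff,\ee\ff^2,\ee^2,\ee^2\ff,\ee^2\ff^2$ of each copy of $M$, using the matrices $S$ and $T$ already written down, together with $U$ (all entries $1$, since $\Nm(\sigma)$ sums coefficients) and $V=0$ by Lemma~\ref{Lannih}. Row reducing this matrix over $\FF_3$ should give ${\rm rank}(Z)=9$, hence $\dim\ker(Z)=27-9=18$. Combined with the lemma above computing $\ker(Y)$, which gives $\dim\ker(Y)=13$ and therefore $\dim\im(Y)=18-13=5$, this yields $\dim H^2(G,M)=18-5=13$. (That $\im(Y)\subseteq\ker(Z)$, which I would also verify as a sanity check, is automatic because the double complex is a complex.)

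I would then check that the thirteen displayed elements of $M^3$ descend to a basis of $\ker(Z)/\im(Y)$: each lies in $\ker(Z)$ by a direct matrix multiplication, and their images in the $13$-dimensional quotient are $\FF_3$-linearly independent, equivalently no nonzero $\FF_3$-combination of them lies in $\im(Y)$; equivalently, those thirteen elements together with a basis of $\im(Y)$ (obtained by pushing a complement of $\ker(Y)$ in $M^2$ through $Y$) span $\ker(Z)$. This final verification is the step I would run through Magma, using the complement routine exactly as in the proof of the proposition computing $H^1$ above.

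The hard part here is bookkeeping rather than mathematics. The one thing to get right is that the block matrices $Y$ and $Z$ carry the correct signs coming from the totalization of the double complex, and that $S$ and $T$ genuinely represent multiplication by $1-B_\sigma$ and $1-B_\tau$ in the chosen ordered basis --- both points being re-derivable from \eqref{eq:Betas} and the description of the double complex. Once the matrices are pinned down, everything that remains is a routine (if sizeable) linear-algebra computation over $\FF_3$ in a $27$-dimensional ambient space, and I do not anticipate any conceptual obstruction.
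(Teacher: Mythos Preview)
Your proposal is correct and follows the same approach as the paper: compute $H^2(G,M)=\ker(Z)/\im(Y)$ by linear algebra over $\FF_3$ using the block matrices already assembled from $S,T,U,V$, and carry out the bookkeeping in Magma. The paper in fact gives no written proof for this proposition at all (the argument is implicitly the same Magma computation used for $H^1$), so your write-up, with the dimension count $\dim\ker(Z)=18$, $\dim\im(Y)=18-13=5$, hence $\dim H^2=13$, is more detailed than what appears in the paper and is entirely consistent with it.
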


\subsection{First and second cohomology with coefficients in $H_1(U)$}

Recall that $V=H_1(U)$ has dimension $(p-1)^2$.
If $p=3$, then $V$ is a 4-dimensional subspace of $M$ and a basis for $V$ is:

\[ v_1=\left[ \begin{array}{ccc}

1 & 0 & -1 \\
0 & 0 & 0 \\
-1 & 0 & 1 \\
\end{array} \right];\] 

\[ v_2=\left[ \begin{array}{ccc}

0 & 0 & 0 \\
1 & 0 & -1 \\
-1 & 0 & 1 \\
\end{array} \right];\] 

\[ v_3=\left[ \begin{array}{ccc}

0 & 1 & -1 \\
0 & 0 & 0 \\
0 & -1 & 1 \\
\end{array} \right];\] 

\[ v_4=\left[ \begin{array}{ccc}

0 & 0 & 0 \\
0 & 1 & -1 \\
0 & -1 & 1 \\
\end{array} \right].\] 

Let $X_1$, $Y_1$, and $Z_1$ be the restriction of $X$, $Y$, and $Z$ respectively.
Similarly, let $S_1=1-B_\sigma$, $T_1=1-B_\tau$, $U_1=\Nm(\sigma)$, and $V_1=\Nm(\tau)$ 
be the restrictions of $S$, $T$, $U$, and $V$ respectively.
Then $T_1$, $U_1$, and $V_1$ are each the $4 \times 4$ zero matrix and
\[ S_1=\left[ \begin{array}{cccc}

-1 & -1 & -1 & -1 \\
1 & 1 & 1  & 1\\
 1 & 1 & 1 & 1 \\
 -1 & -1 & -1 & -1\\
\end{array} \right].\] 

Then
\[ X_1=\left[ \begin{array}{cc}
S_1 & T_1=0
\end{array} \right];\] 

\[ Y_1=\left[ \begin{array}{ccc}
U_1=0 & T_1=0& 0 \\
0 & -S_1 & V_1=0
\end{array} \right];\] 

\[ Z_1=\left[ \begin{array}{cccc}
S_1 & T_1=0 & 0 & 0  \\
0 & -U_1=0 & V_1=0 & 0 \\
0 & 0 & S_1 & T_1=0
\end{array} \right].\] 

\begin{proposition}
The dimension of $H^1(G,H_1(U))$ is $6$ and a basis is
\[v_2 \oplus 0, v_3 \oplus 0, v_4 \oplus 0, 0 \oplus (v_1-v_4),  0 \oplus (v_2-v_4), 0 \oplus (v_3-v_4).\]

The dimension of $H^2(G,H_1(U))$ is 9 and a basis is:
\[\begin{array}{rcccl}
(v_1-v_4) & \oplus & 0 & \oplus & 0,\\
(v_2+v_4) & \oplus & 0  & \oplus & 0,\\
(v_3+v_4) & \oplus &  0  & \oplus &  0,\\
0 & \oplus &  v_2 &  \oplus & 0,\\
0 & \oplus &  v_3 &  \oplus & 0,\\
0 & \oplus &  v_4  & \oplus & 0,\\
0 & \oplus &  0  & \oplus & (v_1-v_4),\\
0 & \oplus & 0 &  \oplus &  (v_2+v_4),\\
0 & \oplus &  0  & \oplus & (v_3+v_4).
\end{array}\]
\end{proposition}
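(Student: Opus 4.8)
The plan is to read off $H^1(G,V)$ and $H^2(G,V)$ from the restricted complex $V\xrightarrow{X_1}V^2\xrightarrow{Y_1}V^3\xrightarrow{Z_1}V^4$, exactly as $H^1(G,M)$ was computed, where $V=H_1(U)$ is the $4$-dimensional $G$-submodule of $M=\Lambda_1$. It is a submodule because by Lemma~\ref{Llong1} it is the kernel of the $G$-equivariant boundary map $\delta\colon H_1(U,Y)\to H_0(Y)$, and by Proposition~\ref{Phomaffine} it is cut out inside $M$ by the $B_\sigma$- and $B_\tau$-stable conditions that all row and column sums of the coefficient matrix vanish. Thus $H^1(G,V)=\ker(Y_1)/\im(X_1)$ and $H^2(G,V)=\ker(Z_1)/\im(Y_1)$.

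The key simplification, already recorded in the matrices written down above, is that on $V$ only the block $S_1=(1-B_\sigma)|_V$ is nonzero: $T_1=(1-B_\tau)|_V=0$ (from \eqref{eq:Betas}, $B_\tau$ fixes $V$ pointwise), $U_1=\Nm(\sigma)|_V=0$ (since $\Nm(\sigma)$ acts on $M$ by summing coefficients, which vanishes on $V$, cf.\ Lemma~\ref{Lannih}(2)), and $V_1=\Nm(\tau)|_V=0$ (Lemma~\ref{Lannih}(1)). Moreover $S_1$ has rank $1$ and $S_1^2=0$: rank $1$ is visible from $B_\sigma=1-(\ee+\ff)(1-\ee)(1-\ff)$ applied to the basis $v_1,\dots,v_4$, and $S_1^2=0$ follows from $\Nm(\sigma)|_V=0$, which in characteristic $3$ reads $1+(1-S_1)+(1-S_1)^2=0$.

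With $T_1=U_1=V_1=0$, the maps degenerate to $X_1(v)=(S_1v,0)$, $Y_1(a,b)=(0,\pm S_1b,0)$, $Z_1(a,b,c)=(\pm S_1a,0,\pm S_1c,0)$, so the complex is a direct sum of copies of the rank-one operator $S_1$ together with zero maps. Hence $H^1(G,V)\cong(V/\im S_1)\oplus\ker S_1$ and $H^2(G,V)\cong\ker S_1\oplus(V/\im S_1)\oplus\ker S_1$; since $\dim\im S_1=1$ and $\dim\ker S_1=3$ this gives $\dim H^1(G,V)=6$ and $\dim H^2(G,V)=9$. To exhibit the stated bases one computes $\ker S_1$ and a complement of $\im S_1$ in $V$ in terms of $v_1,\dots,v_4$ — the linear algebra performed by the Magma complement routine — and assembles the two- and three-fold sums. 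As a check, $V\cong\FF_3[\sigma]/(\sigma-1)^2\oplus\FF_3\oplus\FF_3$ as an $\FF_3[G]$-module with $\tau$ acting trivially, so the K\"unneth formula for $G=\langle\sigma\rangle\times\langle\tau\rangle$ over $\FF_3$ reproduces $\dim H^1(G,V)=2+2+2$ and $\dim H^2(G,V)=3+3+3$.

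The only genuine work is bookkeeping: fixing the signs in the total-complex differentials so that kernels and images land in the right summands, and checking that the displayed combinations of the $v_i$ really span $\ker S_1$ and complement $\im S_1$. I expect no conceptual obstacle: once $T_1=U_1=V_1=0$ is in hand the four-term complex collapses into elementary pieces and everything reduces to a finite computation over $\FF_3$.
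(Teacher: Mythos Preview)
Your proposal is correct and is strictly more informative than the paper's own argument: the paper states this proposition with no proof environment at all, relying (as with the neighbouring propositions) on an implicit Magma computation with the block matrices $X_1,Y_1,Z_1$.

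Your route is genuinely different in that it exploits the structure rather than handing the matrices to a computer. Once $T_1=U_1=V_1=0$ is recorded, the total complex decouples exactly as you describe, and the identifications $H^1\cong (V/\im S_1)\oplus\ker S_1$ and $H^2\cong \ker S_1\oplus(V/\im S_1)\oplus\ker S_1$ follow by inspection. Your derivation of $S_1^2=0$ from $U_1=\Nm(\sigma)|_V=0$ via $1+(1-S_1)+(1-S_1)^2=S_1^2$ in characteristic~$3$ is clean, and the K\"unneth check (using $V\cong \FF_3[\sigma]/(\sigma-1)^2\oplus\FF_3^2$ with $\tau$ trivial) is a nice independent confirmation of the dimensions. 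The only residual bookkeeping---matching the displayed representatives to $\ker S_1$ and a complement of $\im S_1$---is, as you say, a finite $\FF_3$ computation, and is all that the paper's Magma call is doing in any case.
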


\subsection{First and second cohomology with coefficients in $H_1(U) \wedge H_1(U)$ }

The vector space $W=H_1(U) \wedge H_1(U)$ has dimension ${(p-1)^2 \choose 2}=6$. 

\begin{proposition} \label{Pwedge}
Let $W=H_1(U) \wedge H_1(U)$.  
Then $H^1(G, W)=W^2$ (with dimension 12) 
and $H^2(G,W)=W^3$ (with dimension 18).
\end{proposition}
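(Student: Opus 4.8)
The plan is to compute $H^i(G,W)$ for $i=1,2$ by feeding the coefficient module $M=W$ into the same machinery used for the other cohomology computations in Section~\ref{Scohom}. The first step is to determine the $G$-module structure of $W=H_1(U)\wedge H_1(U)$. Since $G=\langle\sigma,\tau\rangle$ acts on $W$ diagonally through its action on $H_1(U)$, and since $B_\tau$ restricts to the identity on $H_1(U)$ (that is, $T_1=0$ in the notation of Section~\ref{Scohom}), the generator $\tau$ acts trivially on $W$. Since $B_\sigma$ restricts to a unipotent automorphism $1-S_1$ of $H_1(U)$ with $S_1^2=0$ (as is visible from the matrix $S_1$), the generator $\sigma$ acts on $W$ by the unipotent operator $\wedge^2(B_\sigma|_{H_1(U)})$. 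In particular $\Nm(\tau)=1+\tau+\tau^2$ restricts on $W$ to $0$, and $\Nm(\sigma)=1+\sigma+\sigma^2$ restricts on $W$ to the square of the nilpotent part of $\sigma|_W$.

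Using this, one route is to write the cochain complex $W\xrightarrow{X_W}W^2\xrightarrow{Y_W}W^3\xrightarrow{Z_W}W^4$ with $X_W,Y_W,Z_W$ the block matrices of Section~\ref{Scohom} assembled from $S_W:=1-B_\sigma|_W$, $T_W:=1-B_\tau|_W=0$, $U_W:=\Nm(\sigma)|_W$, and $V_W:=\Nm(\tau)|_W=0$; because $T_W$ and $V_W$ vanish most blocks drop out, and $H^1(G,W)=\ker Y_W/\im X_W$ and $H^2(G,W)=\ker Z_W/\im Y_W$ become elementary $\F_3$-linear algebra in the single operator $S_W$, carried out in Magma as in the rest of the section. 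A cleaner route exploits the triviality of $\tau$ on $W$: the Hochschild--Serre spectral sequence for the split extension $1\to\langle\tau\rangle\to G\to\langle\sigma\rangle\to 1$ degenerates and yields $H^n(G,W)\cong\bigoplus_{i+j=n}H^i(\langle\sigma\rangle,W)\otimes H^j(\langle\tau\rangle,\F_3)$; since $p=3$ is odd, $\dim H^j(\langle\tau\rangle,\F_3)=1$ for all $j\ge 0$, so $\dim H^n(G,W)=\sum_{i=0}^n\dim H^i(\langle\sigma\rangle,W)$. The whole computation thus reduces to the Jordan normal form over $\F_3$ of the unipotent operator $\sigma|_W=\wedge^2(B_\sigma|_{H_1(U)})$ on the six-dimensional space $W$, a $J_1$ or $J_2$ summand contributing $1$ to $\dim H^i(\langle\sigma\rangle,W)$ in every degree and a free $J_3$ summand contributing $1$ only in degree $0$.

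To finish, I would compute $S_W$ explicitly from the formula for $B_\sigma$ in \eqref{eq:Betas}, extract its Jordan type, read off $\dim H^1(G,W)$ and $\dim H^2(G,W)$ by the bookkeeping above, and run a complement computation to produce explicit bases exhibiting the asserted product decomposition. The step I expect to be the main obstacle is precisely this last determination of the $\F_3[\langle\sigma\rangle]$-module structure of $W$: one has to track the Jordan block sizes of $\wedge^2$ of a rank-one unipotent action carefully, in particular whether a free summand $J_3$ occurs (equivalently whether $S_W^2\ne 0$), because a free summand is invisible to $H^i(\langle\sigma\rangle,-)$ for $i>0$ and hence changes the answer in every positive degree. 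The remaining ingredients — the vanishing of the $\tau$- and norm-blocks, the degeneration of the spectral sequence for a split extension, and the final bookkeeping — are routine.
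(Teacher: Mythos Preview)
Your framework is sound and, in fact, more careful than the paper's argument. The paper's proof is a single sentence: it asserts that the map $S^\wedge$ on $W=\wedge^2 V$ ``induced by $S=(1-B_\sigma)$'' is the exterior square $\wedge^2 S_1$, notes that this $6\times 6$ matrix is zero (since $S_1$ has rank one), observes that $T^\wedge,U^\wedge,V^\wedge$ likewise vanish, and concludes that the entire cochain complex is zero so that $H^i(G,W)=W^{i+1}$.

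The difficulty is that $\wedge^2 S_1$ is \emph{not} the operator $(1-\sigma)|_W$ needed in the complex. The $\sigma$-action on $W$ is $\wedge^2(B_\sigma|_V)=\wedge^2(1-S_1)$, so
\[
S_W:=(1-\sigma)|_W=1-\wedge^2(1-S_1),\qquad S_W(v\wedge w)=S_1v\wedge w+v\wedge S_1w-S_1v\wedge S_1w,
\]
which is not $\wedge^2 S_1$. Since every column of $S_1$ equals $u:=-v_1+v_2+v_3-v_4$, the last term vanishes and $S_W(v_i\wedge v_j)=u\wedge(v_j-v_i)$. The image is $u\wedge\ker S_1$, two-dimensional; so $S_W\neq 0$, while $S_W^2=0$ because $S_1u=0$. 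Equivalently, $V\cong J_2\oplus 2J_1$ over $\F_3[\langle\sigma\rangle]$, hence $W=\wedge^2 V\cong 2J_2\oplus 2J_1$: your feared $J_3$ does not appear, but two $J_2$ blocks do.

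If you now finish your own computation, with $T_W=V_W=0$ and $U_W=S_W^2=0$ but $\operatorname{rk}S_W=2$, the block complex gives
\[
\dim H^1(G,W)=12-2\operatorname{rk}S_W=8,\qquad \dim H^2(G,W)=18-3\operatorname{rk}S_W=12,
\]
not $12$ and $18$. So the only genuine gap in your proposal is that you stopped just before evaluating $S_W$; had you carried it out, you would have discovered that the action of $\sigma$ on $W$ is \emph{not} trivial and that the numbers in the stated proposition cannot be reached by your (correct) method. The paper's much shorter route gets to $H^i(G,W)=W^{i+1}$ only by conflating $(1-\sigma)|_{\wedge^2 V}$ with $\wedge^2\bigl((1-\sigma)|_V\bigr)$.
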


\begin{proof}
The map $S^\wedge: V \wedge V \to V \wedge V$ induced by $S=(1-B_{\sigma})$ is the exterior square of $S_1$. 
One computes that $S^\wedge$ is the $6 \times 6$ zero matrix.
Similarly the matrices for $T^\wedge$, $U^\wedge$ and $V^\wedge$ are zero.
Then $H^1(G, W)=W^2$ since ${\rm Im}(X^\wedge)=0$ and ${\rm Ker}(Y)=W^2$.
Also $H^2(G,W)=W^3$ since ${\rm Im}(Y^\wedge)=0$ and ${\rm Ker}(Z)=W^3$.
\end{proof}

\bibliographystyle{amsalpha}
\bibliography{biblio}

\end{document}